\documentclass[12pt,a4paper]{amsart}

\usepackage[utf8]{inputenc}
\usepackage[sc,osf]{mathpazo}
\usepackage[T1]{fontenc}
\usepackage[scaled=0.85]{beramono}
\usepackage[euler-digits]{eulervm}
\usepackage{amssymb}
\usepackage{mathtools}
\usepackage[a4paper,centering]{geometry}
\usepackage[pdfdisplaydoctitle,colorlinks,breaklinks,urlcolor=blue,linkcolor=blue,citecolor=blue]{hyperref}
\usepackage[capitalise,nameinlink]{cleveref}
\crefname{equation}{}{}
\usepackage{enumitem}
\usepackage[english]{babel}
\newtheorem{theorem}{Theorem}[section]
\newtheorem{lemma}[theorem]{Lemma}
\newtheorem{proposition}[theorem]{Proposition}

\newtheorem{conjecture*}{Conjecture}
\theoremstyle{definition}
\newtheorem{definition}[theorem]{Definition}

\theoremstyle{remark}
\newtheorem{remark}[theorem]{Remark}
\numberwithin{equation}{section}
\newcommand{\divg}{\operatorname{div}}
\def\<{\langle}
\def\>{\rangle}
\def\d{{\,\rm d}}

\newcommand{\E}{\mathbb{E}}
\newcommand{\N}{\mathbb{N}}
\newcommand{\R}{\mathbb{R}}
\newcommand{\Z}{\mathbb{Z}}
\newcommand{\Tb}{\mathbb{T}}
\newcommand{\Pb}{\mathbb{P}}

\newcommand{\Fc}{\mathcal{F}}
\newcommand{\Lc}{\mathcal{L}}

\usepackage{scalerel}
\usepackage{stackengine}
\def\avint{%
	\,\ThisStyle{%
		\ensurestackMath{%
			\stackinset{c}{0\LMpt}{c}{0\LMpt}{\SavedStyle-}{\SavedStyle\phantom{\int}}
		}%
		\setbox0=\hbox{$\SavedStyle\int\,$}\kern-\wd0
	}%
	\int
}

\usepackage{tcolorbox}

\usepackage{mathscinet}
\def\MRnum#1\empty{#1}
\renewcommand{\MRhref}[2]{%
	\href{http://www.ams.org/mathscinet-getitem?mr=#1}{#2}
}
\renewcommand{\MR}[1]{
	\relax\ifhmode\unskip\space\fi
	\MRhref{\MRnum#1\empty}{\texttt{\Tiny[MR\MRnum#1\empty]}}
}

\makeatletter
\@namedef{subjclassname@2020}{2020 Mathematics Subject Classification}
\makeatother

\begin{document}
	\title[]{Delayed Blow-up in 3D Fluids via Pseudo-Transport Noise}
	\author[S. Jiao]{Shuaijie Jiao}
	\author[M. Romito]{Marco Romito}
	\address{School of Mathematical Sciences, University of Chinese Academy of Sciences,
		Beijing 100049, China, and Academy of Mathematics and Systems Science, Chinese Academy of Sciences, Beijing
		100190, China}
	\address{Dipartimento di Matematica, Universit\`a di Pisa, Largo Bruno Pontecorvo 5, 56127 Pisa, Italia }
	\email{\href{mailto:jiaoshuaijie@amss.ac.cn}{jiaoshuaijie@amss.ac.cn}}
	\email{\href{mailto:marco.romito@unipi.it}{marco.romito@unipi.it}}
	\urladdr{\url{http://people.dm.unipi.it/romito}}
	
	\keywords{3D Euler equations, 3D Navier-Stokes equations, regularization by noise, scaling limit}
	\subjclass[2020]{65H50,60H15, 35Q30, 35Q31}
	\begin{abstract}
         We establish scaling limit results for fluid dynamics equations driven by pseudo-transport noise. The behaviour of noise at small scales is governed by a parameter $a$. This extends previous results of \cite{FlaLuo20, Gal20}, which corresponds to $a=0$ in our setting. Depending on the value of $a$, we prove that the noise delays the potential blow-up of both the 3D Euler and Navier-Stokes (NS) equations with high probability.
	\end{abstract}
	\maketitle
	\tableofcontents
	
	\section{Introduction}

	We explore the effect of pseudo-transport noise on the 3D incompressible fluid equations on $\Tb^3$:
	\begin{equation*}
 	    \begin{cases}
	 	    \d u + (u\cdot\nabla u)\,\d t + \nabla p\,dt = \kappa\Delta u\,\d t + \sum_k \Lc_k u\circ \d W^k,\\
            \divg u = 0,\\
	 		u(0)=u_0,
	 	\end{cases}
	  \end{equation*}
    where $u$ is the velocity field and $\kappa>0$ (resp. $\kappa=0$) corresponds to the Navier-Stokes equations (resp. the Euler equations), and noise is assumed to be divergence-free, see \eqref{eq:Lk} below. Here, $\{W^k\}$ is a sequence of independent standard Brownian motions and $\circ$ denotes the Stratonovich integral.
    The above equation will be understood as projected onto the space of divergence-free vector fields through the Leray projection $\Pi$ (defined in \eqref{Leray} below), namely
	\begin{equation}\label{fluid eq}
	 	\d u + \Pi(u\cdot\nabla u)\,\d t= \kappa\Delta u\,\d t + \sum_k \Lc_k u\circ \d W^k.\\
	\end{equation}
    To specify the structure of the noise, let $\nu>0$ be a fixed noise intensity, and $a,b \in \mathbb{R}$ be parameters. Given a sequence of divergence-free, smooth vector fields $\{\sigma_k\}$ belonging to the set
	\begin{equation}
		\mathcal{S}^\nu:=\bigg\{\{\sigma_k\}:\sigma_k\in C_{\divg}^\infty(\Tb^3;\R^3), \sum_k\|(-\Delta)^{\frac{a}{2}}\sigma_k\|_{L^\infty}^2\leq \nu\bigg\},
	\end{equation}
    the operator $\mathcal{L}_k$ is defined as
    \begin{equation}\label{eq:Lk}
      \Lc_k u
        = (-\Delta)^b\Pi\big(\sigma_k\cdot\nabla(-\Delta)^{a+b}u \big).
    \end{equation}
    Note that the specific choice $a=b=0$ recovers the transport noise. Additionally, we assume throughout this paper that both $u$ and $\{\sigma_k\}$ have zero average over $\Tb^3$. Moreover, for the Navier-Stokes case ($\kappa > 0$), we set $\kappa = 1$ for simplicity, as its specific value is irrelevant to our results.

    \subsection{Background}
    Global regularity for the deterministic 3D incompressible Navier-Stokes (NS)  and Euler equations is one of the most significant open problems in the analysis of fluid equations. The only known globally controlled quantity is given by the energy, 
    \begin{equation*}
    	\|u(t)\|^2 + 2\kappa\int_{0}^{t}\|\nabla u(s)\|_{L^2}^2\d s \leq \|u_0\|_{L^2}^2,\quad \text{for every}\quad t\geq 0.
    \end{equation*}
    However, both the $L_t^\infty L^2$ and $L_t^2 H^1$ bounds are supercritical and insufficient to exclude the possibility of blow-up. For the NS equations, the known largest space for initial data in which local well-posedness holds is the critical space $BMO^{-1}$, as shown by Koch and Tataru \cite{KT01}. In contrast, for the larger critical space $B_{\infty,\infty}^{-1}$, Bourgain and Pavlovi\'c \cite{BP08} proved the strong ill-posedness of the NS equations in the sense of norm inflation. For the Euler equations, while local well-posedness is classically known to hold in $H^s$ for $s > \frac{5}{2}$, it was shown in \cite{BL15} by Bourgain and Li that the norm inflation also occurs for the Euler equations with critical $H^{\frac{5}{2}}$ initial data. Recently, Luo \cite{LuoXiao26} extended the norm inflation results to the supercritical setting, covering $H^s$ initial velocity for the NS equations ($s \in (0, \frac{1}{2})$) as well as for the Euler equations ($s\in (0, \frac{5}{2})$).
    
    In recent years, significant progress has been made toward understanding the global regularity of fluid equations \eqref{fluid eq}, although the problem remains far from being fully resolved. For the 3D NS equations, global well-posedness is classical for small initial data in most (sub-) critical spaces. In \cite{CGP10}, Chemin et al. constructed global smooth solutions to the NS equations for a class of initial data which may be arbitrarily large in $B_{\infty,\infty}^{-1}$.  In the opposite direction, Tao \cite{Tao16} constructed a finite-time blow-up solution for an averaged version of the NS equations, which shares all the classical harmonic analysis estimates that are known to be valid for the original NS equations. Due to the absence of viscosity, there are more extensive results concerning the blow-up of the Euler equations. Notably, Elgindi \cite{Elg21} established the finite-time singularity formation for $C^{1, \alpha}$ solutions to the 3D Euler equations on $\mathbb{R}^3$. In the presence of boundaries, Chen and Hou demonstrated blow-up for $C^{1, \alpha}$ velocity fields \cite{CH21}, and more recently for $C^\infty$ data by establishing the nonlinear stability of an approximate self-similar profile \cite{CH23}.
    
    Given these challenges in the deterministic theory, there has been a growing interest in the phenomenon of regularization by noise, focusing on how different stochastic structures can restore well-posedness where deterministic PDE models fail. Regarding additive noise, Flandoli and Romito \cite{FR02, FR08} developed a Caffarelli–Kohn–Nirenberg theory and constructed Markov selections satisfying the strong Feller property for the 3D NS equations. In the case of stochastic Euler equations driven by linear multiplicative noise of large intensity, Glatt et al. \cite{GNV14} demonstrated that the solution is global with high probability. The point is that the stochastic equation can be transformed into a (random) Euler equation with damping. Recently, Bagnara et al. \cite{BMX25} proved global well-posedness for stochastic Euler equations with nonlinear It\^o noise, later extended to the Stratonovich case in \cite{Bag25}.

    In recent years, Stratonovich-type transport noise has been widely accepted as a physically relevant perturbation in fluid dynamics. Such noise can be rigorously derived from stochastic variational principles \cite{Hol15} and models small-scale fluctuations of fluid motions \cite{FP22}. In the seminal work \cite{FGP10}, it was first shown that transport noise can improve the well-posedness theory of linear transport equations with H\"older continuous drift vector field. Subsequently, Flandoli et al. demonstrated that transport noise (with suitable spatial structure) prevents the collapse of certain interacting particle systems \cite{FGP11, DFV14}. More recently, Coghi and Maurelli \cite{CM23} proved that rough Kraichnan noise restores path-wise uniqueness for the 2D Euler equations with $L^p$ vorticity. This result was further extended to the generalized SQG equations in \cite{BGM25, JL25}. 
    
    Regarding the delayed blow-up By transport noise, several key results have emerged. In \cite{FlaLuo21}, Flandoli and Luo proved that large-intensity transport noise can delay the blow-up of strong solutions of the 3D NS equation in vorticity form, thereby achieving global existence with high probability. The key to the proof lies in extending the scaling limit of the transport noise \cite{FlaLuo20,Gal20} to the nonlinear setting, where an additional diffusion term emerges in the limit. Building on this, Agresti \cite{Agr26} proved a similar delayed blow-up result for 3D NS equation in velocity form with transport noise and small hyperviscosity, by combining the scaling limit trick and stochastic maximal regularity (SMR) theory developed in \cite{VNVW12A} and a series of follow-up articles. Inspired by these developments, the present work generalizes the scaling limit of transport noise ($a=b=0$ in \eqref{fluid eq}) to the case of pseudo-transport noise, establishing delayed blow-up for both 3D NS and Euler equations. We also note that the scaling limit for transport noise was recently extended to compact Riemannian manifolds in \cite{Hua25}.
    
    \subsection{Main results}
    To be clearer, we first discuss the scaling limit of pseudo-transport noise before introducing our main results on delayed blow-up. Consider the following equation driven by pseudo-transport noise:
    \begin{equation}\label{intro-scalar eqn}
    	\d \omega =\sum_k (-\Delta)^{-b}\big( \sigma_k\cdot\nabla (-\Delta)^{a+b} \omega\big) \circ \d W^k.
    \end{equation}
    Similar to the transport noise case, at least formally, the quantity $\|\omega(t)\|_{H^{2b+a}}$ is conservative and the measure $(-\Delta)^{-\frac{a}{2}-b}\mu$ is stationary, where $\mu$ is the white noise on $\Tb^d$. Indeed, applying It\^o's formula to $\|\omega(t)\|_{H^{2b+a}}^2$, we get
    \begin{equation}\label{intro-conservation}
        \d \|\omega\|_{H^{2b+a}}^2 = 2\sum_k \big\<(-\Delta)^a \omega, \sigma_k\cdot \nabla(-\Delta)^a\omega\big\>\circ\d W^k=0,\quad \Pb\text{-a.s.}
    \end{equation}
    For the verification of stationarity for $(-\Delta)^{-\frac{a}{2}-b}\mu$, see Proposition \ref{conservation by noise}. Then by carefully choosing $\{\sigma_k^N\}\in \mathcal{S}^\nu$, we can show that the corresponding solution $\omega^N$ to \eqref{intro-scalar eqn} with the same initial data $\omega_0$ converges in law to a process $\bar{\omega}$ satisfying
    \begin{equation*}
    	\begin{cases}
    		\partial_t\bar{\omega} = -\nu (-\Delta)^{1+a}\bar{\omega}, &\text{ if } \omega_0\in H^{2b+a},\\
    		\partial_t\bar{\omega} = -\nu (-\Delta)^{1+a}\bar{\omega} + \sqrt{2\nu}(-\Delta)^{1/2-b}\dot{W}, &\text{ if } \omega_0\sim (-\Delta)^{-\frac{a}{2}-b}\mu,
    	\end{cases}
    \end{equation*}
    where $\dot{W}$ is the space-time white noise. 
    Evidently, $a$ and $b$ characterize the dissipation and fluctuation of the limit process, respectively. This extends the results of \cite{Gal20,FlaLuo20} from the case $a=b=0$. 
    
    In what follows, we focus only on the case of deterministic initial condition. At first glance, a larger $a$ appears to enhance the dissipation in the scaling limit, which might suggest better regularity. In fact, the situation is quite the opposite when performing energy estimates. By a commutator argument similar to that in \cite{CFH19}, we can show in Proposition \ref{GWP-NS-cut-off} that for every $s\neq 2b+a$,
    \begin{equation}\label{intro-higher-estimate}
    	\d\|\omega^N\|_{H^s}^2\lesssim_N \|\omega^N\|_{H^{s+2a}}^2\,\d t + \text{ Martingale part },
    \end{equation}
    which is closed if and only if $a\leq 0$. The estimate \eqref{intro-conservation} and \eqref{intro-higher-estimate} imply that the only closed estimate is the formally conservative quantity $H^{2b+a}$ when $a>0$. That is to say, the pseudo-transport noise does create singularity for $a>0$. In addition, we emphasize that while the energy estimate \eqref{intro-higher-estimate} closes for fixed $N$ as long as $a\leq 0$, \textbf{uniform} a priori estimates for the sequence $\{\omega^N\}$ do not hold beyond the regularity of $L^\infty (0,T; H^{2b+a})$. This is due to the lack of uniform smoothness of $\{\sigma_k\}$. Indeed, by the explicit expression \eqref{theta^N} below, for every $\epsilon>0$, we have
    \begin{equation}\label{no uniform smooth}
    	\sup_{N}\sum_k \|\sigma_k^N\|_{W^{a,\infty}}^2\leq \nu,\quad \sup_{N}\sum_k \|\sigma_k^N\|_{H^{a+\epsilon}}^2 =\infty.
    \end{equation}
    This indicates that any derivative estimate of order higher than $a$ cannot be uniform in $N$. Given that the best uniform bound for $\{\omega^N\}$ is in $L_t^\infty H^{2b+a}$, convergence is restricted to the strictly weaker space $C([0,T];H^{2b+a-\epsilon}) $.
    
    Our main results heavily rely on the above scaling limit, which captures the dissipation enhancement by the pseudo-transport noise. For simplicity, we only give informal statements of delayed blow-up on finite time interval $[0,T]$, where $T>0$ is fixed. We remark that thanks to the small-data global well-posedness of the NS equations, the strong solutions obtained in Theorems \ref{informal-1} (1) and \ref{informal-2} below extend globally in time; cf. Theorems \ref{Delay blow up-NS} and \ref{SMR-delayed blow-up}.
    
    \begin{theorem}[Informal version of Theorem \ref{Delay blow up-NS} and \ref{Delay blow-up Euler}]\label{informal-1}
    	Assume $T>0, 2b+a>\frac{5}{2}$, and fix a bounded ball $\mathcal{B}\subset H^{2b+a}$. The following holds:
    	\begin{itemize}
    		\item [(1)] \emph{Navier-Stokes case} ($\kappa =1$). Let $a\in [\frac{1}{4},\frac{1}{2})$. Then, for every $\epsilon\in (0,1)$ and $\nu>0$, there exists a finite family divergence-free vector fields $\{\sigma_k\}\in \mathcal{S}^\nu$, such that for all divergence-free vector field $u_0\in \mathcal{B}$, the stochastic NS equations \eqref{fluid eq}  admits a unique strong solution up to time $T$ with a probability no less than $1-\epsilon$.
    		\item [(2)] \emph{Euler case} ($\kappa =0$). Let $a\in [-1,0]$. Then, for every $\epsilon\in (0,1)$, there exists $\nu\gg 0$ and a finite family divergence-free vector fields $\{\sigma_k\}\in \mathcal{S}^\nu$, such that for all divergence-free vector field $u_0\in \mathcal{B}$, the stochastic Euler equations \eqref{fluid eq}  admits a unique strong solution up to time $T$ with a probability no less than $1-\epsilon$.
    	\end{itemize}
    \end{theorem}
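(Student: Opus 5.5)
The plan follows the scaling-limit strategy of \cite{FlaLuo21, Agr26}, adapted to the pseudo-transport operator $\Lc_k$. The steps are a cut-off, a scaling limit, a deterministic analysis of the limit, and a comparison. The function space is $H^{2b+a}$, the formally conserved level from \eqref{intro-conservation}; since $2b+a>\frac52$, it is subcritical for both NS and Euler.

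\textbf{Step 1: truncated equation.} Fix $R>0$ and a cut-off $\chi_R(\|u\|_{W^{1,\infty}})$, or $\chi_R(\|u\|_{H^{2b+a-\delta}})$ with $2b+a-\delta>\frac52$. Multiply the nonlinearity by it and prove global well-posedness of the truncated equation for each fixed finite family $\{\sigma_k\}$. For this I use the commutator estimate \eqref{intro-higher-estimate} (Proposition \ref{GWP-NS-cut-off}), which closes:
\begin{itemize}
\item for $a\le 0$ in the Euler case;
\item for $a<\frac12$ in the NS case, where the $H^{s+2a}$ term is absorbed by the viscous $H^{s+1}$ dissipation, with room to spare since $2a<1$.
\end{itemize}
The main uniform-in-$N$ a priori bound comes from the $H^{2b+a}$ energy identity. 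The transport-type noise gives zero contribution after the Itô–Stratonovich correction, up to lower-order terms controlled by $\nu$. The nonlinearity contributes $\lesssim \chi_R\|u\|_{W^{1,\infty}}\|u\|_{H^{2b+a}}^2$, by Kato–Ponce.

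\textbf{Step 2: scaling limit.} Choose $\sigma_k^N$ as in \eqref{theta^N}, concentrated on high modes $N\le|k|\le 2N$ and normalized so that $\{\sigma^N_k\}\in\mathcal S^\nu$. Write the Stratonovich equation in Itô form. The Itô–Stratonovich corrector converges to $-\nu(-\Delta)^{1+a}u$, as for the scalar equation. The martingale part tends to $0$ in $H^{-\beta}$, because each $\sigma_k^N$ carries only an $O(N^{-3/2})$ share of the intensity. The uniform $L^\infty_tH^{2b+a}$ bound and an Aldous/Simon compactness argument then give tightness in $C([0,T];H^{2b+a-\epsilon})$, consistently with \eqref{no uniform smooth}. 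Every limit point solves the deterministic truncated equation
\[
\partial_t\bar u+\chi_R\,\Pi(\bar u\cdot\nabla\bar u)=\kappa\Delta\bar u-\nu(-\Delta)^{1+a}\bar u ,
\]
which is unique, so the whole sequence converges in probability, uniformly over $u_0\in\mathcal B$ by a contradiction/compactness argument.

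\textbf{Step 3: the limit equation is global with small norm.} Take $R$ larger than $\sup_t\|\bar u\|$ of the limit, so the cut-off is inactive. This requires a global bound for the deterministic equation, uniform on $\mathcal B$, and here the two cases split.
\begin{itemize}
\item Euler, $a\in[-1,0]$: the dissipation $\nu(-\Delta)^{1+a}$ has order $\ge0$. With $\nu\gg1$ it yields exponential decay, say $e^{-\nu t}$, of the subcritical norm, which beats the quadratic growth from $\|\nabla u\|_\infty$. This is why a large $\nu$ is needed.
\item NS: dissipation of order $2+2a\ge\frac52$ combined with $\Delta$ gives a smoothing estimate. Any $\nu>0$ then makes the solution small (below the small-data threshold) after a short time, provided $a\ge\frac14$. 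The reason is that in energy estimates $(-\Delta)^{1+a}$ controls the $H^{2b+a}$-level nonlinearity, i.e.\ hyperdissipation of order $\frac54$ removes the blow-up obstruction, as in the Lions hyperviscous NS theory.
\end{itemize}

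\textbf{Step 4: conclusion.} By convergence in probability, $\Pb(\sup_{t\le T}\|u^N-\bar u\|_{H^{2b+a-\epsilon}}>1)<\epsilon$ for $N$ large. On the complement the cut-off is inactive, so $u^N$ solves the true equation on $[0,T]$.

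\textbf{Main obstacle.} The delicate point is the mismatch in Step 2. The cut-off must be expressed in a norm (e.g.\ $W^{1,\infty}$) controlled by $H^{2b+a-\epsilon}$, the space where convergence holds. Meanwhile, uniqueness and blow-up criteria must hold with data only in $H^{2b+a}$ and with noise lacking uniform smoothness. I would handle this by choosing $\epsilon$ with $2b+a-\epsilon>\frac52$, so that the Beale–Kato–Majda-type criterion and the embedding into $W^{1,\infty}$ both apply.
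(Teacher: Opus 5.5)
Your proposal is correct and follows essentially the same route as the paper. You cut off the nonlinearity in a subcritical norm and use the pathwise $H^{2b+a}$ bound, uniform in $N$; the noise contribution there is in fact exactly zero, by anti-symmetry of $\Lc_k$ in $H^{2b+a}$, with no lower-order remainder. You then get tightness and identify the deterministic hyperviscous limit, obtain global bounds for that limit (Lions exponent $a\ge\frac14$ for NS, large $\nu$ for Euler), and compare on a high-probability event where the cut-off is inactive. Your compactness argument for uniformity over $\mathcal B$ is a point the paper leaves implicit.

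One small correction: because of the Leray projection inside $\Lc_k^N$, the Itô–Stratonovich corrector converges to $-\frac{3\nu}{5}(-\Delta)^{1+a}$ (Lemma \ref{S^N limit}), not to $-\nu(-\Delta)^{1+a}$ as in the scalar case. This only changes constants.
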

    Below, we briefly explain the results.
    Choose $\sigma_k=\sigma_k^N$ as in \eqref{theta^N}, fix initial data $u_0\in H^{2b+a}$ and denote the corresponding (local) solution to \eqref{fluid eq} by $u^N$. Formally by scaling limit, we expect that $u^N$ converges to the solution $\bar{u}$ of the deterministic equation:
    \begin{equation}
    	\partial_t \bar{u} +\Pi(\bar{u}\cdot\nabla\bar{u}) = \kappa \Delta \bar{u} - \frac{3\nu}{5}(-\Delta)^{1+a}\bar{u}. 
    \end{equation}
    The factor $\frac{3}{5}$ is due to the Leray projection in $\Lc_k$. The delayed blow-up results in Theorem \ref{informal-1} are based on two key points:
    \begin{enumerate}
    	\item the limit $\bar{u}$ exists globally and satisfies good energy estimates,
    	\item $u^N$ converges to $\bar{u}$ in an appropriate space, thereby providing better estimates for $u^N$.
    \end{enumerate}
    First, (1) always holds when $a \ge \frac{1}{4}$ (corresponding to the Lions exponent \cite{Lio69}), while for $a \in [-1, \frac{1}{4})$, the global regularity of $\bar{u}$ is guaranteed provided that the noise intensity $\nu$ is sufficiently large. This explains why no large-noise assumption on $\nu$ is needed for Theorem \ref{informal-1} (1) to hold. Second, as mentioned at the beginning of Section 1.1, control in a critical space is required at the very least to ensure the global existence of $u^N$; on the other hand, since the scaling limit converges only in $C([0, T]; H^{2b+a-\epsilon})$, it implies that the initial data in $H^{2b+a}$ must be subcritical. In the NS equations, the extra regularity from the viscosity $\kappa \Delta$ allows one to handle the $2a$-order singularity (cf. eq. \eqref{intro-higher-estimate}) induced by the pseudo-transport noise for $a < \frac{1}{2}$, hence guarantees the local well-posedness of $u^N$. In contrast, for the Euler equations, we are restricted to the more regular case, namely $a \le 0$.  For the case where $a$ is too large and the noise becomes overly singular, see Proposition \ref{a>0.5}, \ref{a>0.25}-\ref{a<0.25}.
    
    In the above discussion, the viscosity is used solely to ensure the local well-posedness of $u^N$ and does not affect the scaling limit. Doing $H^{2b+a}$ energy estimates and neglecting the nonlinear term, we have the uniform bounds 
    \begin{equation*}
    	\sup_{N} \Big(\|u^N\|_{C([0,T],H^{2b+a})} + \|u^N\|_{L^2(0,T;H^{2b+a+1})}\Big)<\infty, \quad\Pb\text{-a.s.}
    \end{equation*}
    Agresti \cite{Agr25, Agr26} first extended the uniform estimates for the scaling limit of transport noises to the $L^p$ setting with $p > 2$. Under the assumption $a \in [-1, 0)$, the perturbation argument in \cite{Agr26} can be adapted to our framework. Indeed, for every $2 \le p < \infty$, we have:
    \begin{equation}\label{intro-SMR}
    	\sup_{N} \Big(\E\Big[\|u^N\|_{C([0,T],X_{tr})}^p\Big] + \E\Big[\|u^N\|_{L^p(0,T;H^{2b+a+1})}^p\Big]\Big)<\infty, 
    \end{equation}
    where $X_{tr}=B_{2,p}^{2b+a+1-\frac{2}{p}}$.  The restriction $a\in [-1,0)$ ensures that the pseudo-transport noise is of lower order relative to the viscosity $\kappa \Delta$, allowing a perturbation argument (cf. Proposition \ref{USMR}) to yield estimates \textbf{uniform} in $N$. The borderline case $a=0$ (transport noise) is more delicate: here, the noise is of the same order as the leading operator $\kappa\Delta$ within the framework of stochastic maximal regularity, yet the vector fields $\{\sigma_k^N\}$ become increasingly rough as $N\rightarrow \infty$ according to \eqref{no uniform smooth}, rendering the perturbation argument invalid. In \cite{Agr26}, the author employs the hyperviscosity $-\kappa(-\Delta)^{\gamma}$ with $(\gamma>1)$ to derive the uniform estimate similar to \eqref{intro-SMR}. We refer the reader to \cite[Section 1.3]{Agr26} for a more detailed discussion.
    
   By choosing $p \gg 2$ so that $L^p(0,T; H^{2b+a+1})$ and $C([0,T],X_{tr})$ are subcritical for the 3D NS equations, and by exploiting this uniform estimate in the scaling limit, we arrive at the following results:
    \begin{theorem}[Informal version of Theorem \ref{SMR-finite time-delayed blow-up}]\label{informal-2}
    	Assume $T>0, a\in [-1,0), 2b+a>-\frac{1}{2}$, and fix a bounded ball $\mathcal{B}\subset H^{2b+a+1}$.  Then, for every $\epsilon\in (0,1)$, there exists $\nu\gg 0$ and a finite family divergence-free vector fields $\{\sigma_k\}\in \mathcal{S}^\nu$, such that for all divergence-free vector field $u_0\in \mathcal{B}$, the stochastic NS equations \eqref{fluid eq}  admits a unique strong solution up to time $T$ with a probability no less than $1-\epsilon$.
    \end{theorem}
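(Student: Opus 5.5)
We argue by contradiction, using the scaling limit and the uniform stochastic maximal regularity bound \eqref{intro-SMR}. Fix $T$, $\epsilon$ and the ball $\mathcal{B}\subset H^{2b+a+1}$.

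First, we choose $p\gg 2$ so that both $L^p(0,T;H^{2b+a+1})$ and $C([0,T];X_{tr})$, with $X_{tr}=B^{2b+a+1-2/p}_{2,p}$, are subcritical for 3D NS. This is possible because $2b+a>-\tfrac12$ gives $2b+a+1>\tfrac12$.

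Second, we treat the limit equation
\[
\partial_t\bar u+\Pi(\bar u\cdot\nabla\bar u)=\Delta\bar u-\tfrac{3\nu}{5}(-\Delta)^{1+a}\bar u .
\]
Since $a\in[-1,0)$, the extra dissipation has order $2+2a\in[0,2)$. On modes $|k|\lesssim \nu^{1/(-2a)}$ it dominates $\Delta$, so it damps low frequencies strongly; at $a=-1$ it is a plain damping $-\tfrac{3\nu}{5}\bar u$. A standard energy estimate in $H^{2b+a+1}$ then shows the following. For $\nu\ge\nu_0(\mathcal{B},T)$, the solution $\bar u$ exists globally, uniformly for $u_0\in\mathcal{B}$. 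Moreover, by some time $t_0\le T$ it has entered the small-data region for the critical space, and it stays bounded there in the norms above.

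Third, we localize. We work with the cut-off system of Proposition \ref{GWP-NS-cut-off}: the nonlinearity is multiplied by $\theta_R(\|u\|_{X})$ with $X$ a subcritical norm, which gives global solutions $u^N_R$. With $\sigma_k=\sigma_k^N$ as in \eqref{theta^N}, we combine the uniform bound \eqref{intro-SMR} (via Proposition \ref{USMR}) with compactness. A compact embedding and Aubin–Lions give tightness of $\{u^N_R\}$ in $C([0,T];H^{2b+a+1-\delta})\cap L^p(0,T;H^{2b+a+1-\delta})$ for small $\delta$. The noise term's quadratic covariation is controlled by $\sup_k\|\sigma_k^N\|_{L^\infty}\to0$. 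Its Itô–Stratonovich corrector converges to $-\tfrac{3\nu}{5}(-\Delta)^{1+a}$, using the explicit form of $\sigma^N_k$ and the Leray factor $3/5$. Passing to the limit by Skorokhod and identifying the limit via uniqueness, $u^N_R\to\bar u_R$ in probability. The convergence is uniform over $u_0\in\mathcal{B}$ by a contradiction argument with sequences $u_0^N\in\mathcal{B}$, since $\mathcal{B}$ is compact in the weaker topology.

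Fourth, we remove the cut-off. Choose $R$ larger than the sup of $\|\bar u\|_{X}$ over $\mathcal{B}$, plus one. By the convergence in probability, there is $N$ with $\mathbb{P}(\|u^N_R-\bar u\|_{X}<1)\ge1-\epsilon$. On this event the cut-off is inactive, so $u^N_R$ is the unique strong solution of \eqref{fluid eq} on $[0,T]$. Finally we fix $\sigma_k=\sigma_k^N$, truncated to a finite family, which still lies in $\mathcal{S}^\nu$.

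The main obstacle is the third step. Uniformity in $N$ of \eqref{intro-SMR} must survive the nonlinearity and the cut-off. Convergence must also hold in a topology strong enough, namely the subcritical $X$ with a small loss $\delta$, to control the cut-off. I would choose $p$ large enough that $X=B^{2b+a+1-2/p-\delta}_{2,p}$ is still subcritical. I would then use the perturbative SMR argument of \cite{Agr26}, which works since the noise has order $1+2a<2$.
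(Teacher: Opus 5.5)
Your proposal is correct and follows essentially the same route as the paper's proof of Theorem \ref{SMR-finite time-delayed blow-up}. You cut off the nonlinearity in a subcritical norm, obtain $N$-uniform bounds from Proposition \ref{USMR} as in \cite[Theorem 6.2]{Agr26}, pass to the deterministic limit with extra dissipation $-\frac{3\nu}{5}(-\Delta)^{1+a}$ (global for $\nu>\nu_0(R_0)$, cf.\ Proposition \ref{limit-NS}), and remove the cut-off on a high-probability event. Your compactness argument for uniformity over $\mathcal{B}$ is a useful detail the paper leaves implicit.

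There are only minor slips:
\begin{itemize}
\item For $2b+a\le\frac12$ the cut-off system is not covered by Proposition \ref{GWP-NS-cut-off}, which needs $2b+a>\frac12$; it must be solved within the SMR framework.
\item The perturbation argument works because $\Lc_k^N$ has order $1+2a<1$, combined with the $N$-uniform bound of Lemma \ref{order-S^N}. Order $<2$ alone would not suffice, as the case $a=0$ shows.
\item Tightness in $C([0,T];H^{2b+a+1-\delta})$ needs $\delta>\frac{2}{p}$, which is consistent with your final choice of $X$.
\end{itemize}
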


    \subsection{Organization and notation}
    The remainder of this paper is organized as follows. Section \ref{Scaling-Limit} introduces the pseudo-transport noise and derives its scaling limit for both deterministic and random initial conditions. Section \ref{NS-I} is devoted to the delayed blow-up for the NS equations; notably, the main result (Theorem \ref{Delay blow up-NS}) holds for any $\nu>0$. In contrast, Section \ref{Euler} addresses the Euler equations, where a sufficiently large $\nu$ is required (Theorem \ref{Delay blow-up Euler}). Finally, in Section \ref{NS-II}, we establish uniform stochastic $L^p$ maximal regularity estimates for the more regular regime $a<0$, and proves the corresponding results for the NS equations.
     
    Now we collect some frequently used notations. The symbol $f\lesssim_\alpha g$ for two functions $f$ and $g$ means that there exists a positive
    constant $C_\alpha$ depending on the parameter $\alpha$ such that $f(x)\leq C_\alpha g(x)$ for all $x$. Let $(\Omega,\Fc,(\Fc_t),\Pb)$ be a filtered probability space which satisfies the usual conditions and $\E$ denotes the expectation.
     
    For a measure space $(S,\mu), p \in (1,\infty)$ and a Banach space $X$, we write $L^p(S,\mu;X)$ for the Bochner space of strongly measurable, $p$-integrable $X$ valued functions. Fix $T>0$, we denoted by $W^{1,p}(0,T;X)$ the set of all $f \in L^p(0,T;X)$ such that $f'\in L^p(0,T;X)$ endowed with the natural norm. Moreover, for $\theta\in (0,1)$, the Bessel-potential space $H^{\theta,p}(0,T;X)$ is defined as
    \begin{equation*}
     	H^{\theta,p}(0,T;X) :=  \big[L^p(0,T;X),W^{1,p}(0,T;X)\big]_{\theta},
    \end{equation*}
    where $[\cdot,\cdot]_{\theta}$ denotes the complex interpolation, see \cite{HvNVMW16} for details.
     
    Next, we introduce the function spaces on the torus $\Tb^d=[-\pi,\pi]^{d}$. For a tempered distribution $u \in \mathcal{S}'(\Tb^d)$, let $\hat{u}$ or $\Fc[u]$ denote its Fourier transform, which is defined by
    \begin{equation*}
    	\hat{u}(k) = \int_{\Tb^d} u(x)e_{-k}(x)\,\d x, \quad\text{ for every } k\in \Z_0^d, 
    \end{equation*} 
    where $e_k(x)=(2\pi)^{-\frac{d}{2}}e^{ik\cdot x}$.
    Throughout this paper, we restrict our attention to distributions with zero mean, i.e., $\hat{u}(0) = 0$. Under this convention, for $s \in \mathbb{R}$ and $1 \leq p, q \leq \infty$, we denote by $B_{q,p}^s(\mathbb{T}^d)$ the standard Besov space on the torus. In the special case $p = q = 2$, the Besov space $B_{2,2}^s(\mathbb{T}^d)$ coincides with the usual Sobolev space $H^s(\mathbb{T}^d)$. 
     
    Finally, the Leray projection $\Pi$ is a Fourier multiplier with the symbol
    \begin{equation}\label{Leray}
   	    \widehat{\Pi u}(\xi) = P_{\xi}^{\perp} \hat{u}(\xi), \quad \xi \in \mathbb{Z}_0^d,
    \end{equation}
    where $P_{\xi}^{\perp} = I_d - \frac{\xi \otimes \xi}{|\xi|^2}$ denotes the projection onto the space orthogonal to $\xi$.
    
	\section{Scaling limit for pseudo-transport noise}\label{Scaling-Limit}
	Consider the scalar equation driven by the pseudo-transport noise on d dimensional torus $\Tb^d$:
	\begin{equation}\label{scalar eqn}
		\left\{
		\begin{aligned}
			& \d \omega =\sum_k (-\Delta)^{-b}\big( \sigma_k\cdot\nabla (-\Delta)^{a+b} \omega\big) \circ \d W^k,\\
			& \omega(0)=\omega_0,
		\end{aligned}
		\right.
	\end{equation}
	where $\circ$ denotes the Stratonovich integral, $\{\sigma_k\}$ is a finite family of smooth divergence-free vector field, $\{W^k\}$ is a sequence of standard Brownian motion and $a,b\in\R$ are two parameters. Due to the special structure of the noise, the equation admits a conserved quantity and a corresponding invariant measure. 
	\begin{proposition}\label{conservation by noise}
		At least formally, the following holds:
		\begin{itemize}
			\item[(1). ] Deterministic initial condition: If $\omega_0\in H^{2b+a}$, then for every $t\ge0$, $\omega(t)\in H^{2b+a}$ and $\Vert \omega(t) \Vert_{H^{2b+a}}=\Vert \omega_0 \Vert_{H^{2b+a}}$.
			\item[(2). ] Random initial condition: If Law $(\omega_0)$= $(-\Delta)^{-\frac{a}{2}-b}\mu$, where $\mu$ is the white noise on $\Tb^d$, then $\{\omega(t)\}_{t\ge 0}$ is stationary: for every $t\geq 0$, Law$(\omega(t))$ $=\mu$. 
		\end{itemize}
	\end{proposition}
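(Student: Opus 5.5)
The plan is to conjugate the equation to a standard transport-type equation by the change of variables $\theta := (-\Delta)^{a/2+b}\omega$. Write $A := (-\Delta)^{a/2+b}$, a self-adjoint Fourier multiplier that is invertible on zero-mean distributions. Each noise operator is
\[
L_k\omega := (-\Delta)^{-b}\big(\sigma_k\cdot\nabla(-\Delta)^{a+b}\omega\big).
\]
Conjugating gives
\[
\tilde L_k := A L_k A^{-1} = (-\Delta)^{a/2}\,(\sigma_k\cdot\nabla)\,(-\Delta)^{a/2},
\]
and the equation for $\theta$ reads $\d\theta=\sum_k \tilde L_k\theta\circ\d W^k$. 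Since $\sigma_k$ is divergence-free, $\sigma_k\cdot\nabla$ is skew-adjoint on $L^2$. Therefore $\tilde L_k$ is skew-adjoint on $L^2$, i.e. $\<\tilde L_k f,f\>=0$ for all $f$. This skew-adjointness is the single structural fact behind both statements.

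For (1), note that $\|\omega\|_{H^{2b+a}}=\|\theta\|_{L^2}$. Applying the Stratonovich chain rule to $\|\theta\|_{L^2}^2$ gives
\[
\d\|\theta\|_{L^2}^2=2\sum_k\<\theta,\tilde L_k\theta\>\circ\d W^k=0,
\]
which is exactly \eqref{intro-conservation}. To be slightly more careful, one can pass to It\^o form. The It\^o correction is $\sum_k\tilde L_k^2\theta\,\d t$, which contributes $2\<\theta,\tilde L_k^2\theta\> = -2\|\tilde L_k\theta\|^2$. The quadratic variation contributes $+\sum_k\|\tilde L_k\theta\|^2\cdot 2$. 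These cancel, and the martingale part vanishes by skew-adjointness. Since the statement is only formal, I would present this computation, possibly justified rigorously on a Galerkin truncation. On the Fourier modes $|k|\le M$, the projection $P_M\tilde L_kP_M$ remains skew-adjoint, so the computation is exact there.

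For (2), if $\omega_0\sim A^{-1}\mu$, then $\theta_0=A\omega_0\sim\mu$, the white noise. It therefore suffices to show that white noise is invariant for the $\theta$-flow. The cleanest route is through the solution map. For each fixed realization of the Brownian paths, the Stratonovich flow $\theta_0\mapsto\theta(t)$ is a stochastic flow of linear maps generated by skew-adjoint operators, hence it is an orthogonal transformation of $L^2$ (on each Galerkin truncation it is an exact orthogonal matrix $U_t(W)$). The white noise (restricted to modes $|k|\le M$, a standard Gaussian vector) is invariant under any orthogonal map. Since $\theta_0$ is independent of $W$, conditioning on $W$ gives $\mathrm{Law}(\theta(t))=\mu$. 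Equivalently, one can check infinitesimally that $\int \mathcal{G}F\,\d\mu=0$ for the generator
\[
\mathcal{G}=\tfrac12\sum_k D_{\tilde L_k}^2 .
\]
Each $D_{\tilde L_k}$ is a divergence-free vector field on the Gaussian space, because $\mathrm{tr}\,\tilde L_k=0$ and $\<\tilde L_k\theta,\theta\>=0$. This infinitesimal identity, combined with the Markov property, gives stationarity of the process, and mapping back by $A^{-1}$ gives stationarity of $\omega$ with law $A^{-1}\mu=(-\Delta)^{-a/2-b}\mu$. (The statement writes $\mu$, which I read as this law.)

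The main obstacle is making (2) honest in infinite dimensions. White noise is not supported in $L^2$, and the operators $\tilde L_k$ are unbounded of order $1+a$. I would handle this by Galerkin truncation, using skew-adjoint $P_M\tilde L_kP_M$ for which the argument is exact. I would then pass $M\to\infty$ using tightness in a negative Sobolev space, arguing along the lines of the existing transport-noise literature. Given the word ``formally'' in the statement, the truncated computation together with this remark should suffice.
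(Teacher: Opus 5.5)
Your proposal is correct. Part (1) is the paper's computation. The paper reduces to $b=0$ by passing to $(-\Delta)^b\omega$ and then works with the $H^a$ pairing. Your conjugation by $A=(-\Delta)^{a/2+b}$ pushes the same reduction all the way to $L^2$, so the $\tilde L_k$ are skew-adjoint on $L^2$ and the target measure is the standard white noise. There is a small slip in your It\^o check: the correction is $\frac12\sum_k\tilde L_k^2\theta\,\d t$, and the quadratic variation contributes $\sum_k\|\tilde L_k\theta\|_{L^2}^2\,\d t$. Your two extra factors of $2$ cancel each other, so the conclusion is unaffected.

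For (2), your main argument is genuinely different from the paper's.

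\emph{The paper's route.} It verifies infinitesimal invariance $\int\Lc F\,\d\mu=0$ by computing the Gaussian divergence $\delta_\mu(\sigma_k\cdot\nabla(-\Delta)^a\omega)=-\divg_\omega(\cdot)+\<\sigma_k\cdot\nabla(-\Delta)^a\omega,(-\Delta)^a\omega\>=0$, following \cite[Section 3]{AHKDeF79}. This is exactly your secondary remark that $D_{\tilde L_k}$ is divergence-free because $\mathrm{tr}\,\tilde L_k=0$ and $\<\tilde L_k\theta,\theta\>=0$.

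\emph{Your route.} At each Galerkin level, set $B_k=P_M\tilde L_kP_M$. The solution map $U_t$ then satisfies $\d(U_t^TU_t)=\sum_kU_t^T(B_k^T+B_k)U_t\circ\d W^k=0$. So $U_t$ is a random orthogonal matrix, measurable with respect to $W$ and hence independent of $\theta_0$. Rotation invariance of the standard Gaussian then gives $\mathrm{Law}(\theta(t))=\mathrm{Law}(\theta_0)$.

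\emph{What each buys.} Your argument is more elementary and gives exact invariance of the truncated dynamics directly. It avoids the step from $\int\Lc F\,\d\mu=0$ to invariance of the semigroup, which the paper leaves implicit. For the truncated system it even gives the stronger quenched statement that the conditional law of $\theta(t)$ given $W$ is white noise. The price is that it depends on linearity, through the existence of the orthogonal flow. The divergence-free vector field computation is the one that carries over to equations with nonlinear drift.

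In both approaches, letting $M\to\infty$ yields only weak existence of a stationary solution, consistent with the remark after the proposition. You are also right that the $\mu$ in the conclusion of (2) should read $(-\Delta)^{-\frac a2-b}\mu$.
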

	
	\begin{proof}
		With out loss of generality, we can assume $b=0$. Indeed, by considering the equation satisfied by $(-\Delta)^{b}\omega$, the problem can be reduced to this special case.
		Since $\Vert \omega \Vert_{H^{a}}^2=\<f,(-\Delta)^{a}f\>$, it is straightforward to check that
		\begin{equation*}
			\d \Vert \omega \Vert_{H^{a}}^2=  2\sum_k \big\<(-\Delta)^{a}\omega, \sigma_k\cdot\nabla (-\Delta)^{a} \omega\big\> \circ \d W^k=0,
		\end{equation*} 
		where the last step follows from $\divg \sigma_k=0$. To prove (2), it suffices to check that $\mu$ is infinitesimally invariant in the sense that for every smooth cylinder functional $F$, we have
		\begin{equation}\label{infinitesimal}
			\int \Lc F(\omega)\,\d \mu(\omega)=0,
		\end{equation}
		where $\Lc$ denotes the generator, which is defined as
		\begin{equation*}
			\Lc F(\omega)=\frac{1}{2}\sum_k \big\<\sigma_k\cdot\nabla(-\Delta)^{a}\omega,D\<\sigma_k\cdot\nabla(-\Delta)^{a}\omega, DF(\omega)\>\big\>.
		\end{equation*} 
		We will denote by $\delta_\mu$ the dual of the Malliavin derivative $D$ in $L^2(\mu)$, then \eqref{infinitesimal} is equivalent to the following:
		\begin{equation*}
			\sum_k\int \big\<\delta_\mu(\sigma_k\cdot\nabla(-\Delta)^{a}\omega), \<\sigma_k\cdot\nabla(-\Delta)^{a}\omega, DF(\omega)\> \big\>=0.
		\end{equation*}
		Arguing exactly as \cite[Section 3]{AHKDeF79}, it can be shown that 
		\begin{equation*}
			\delta_\mu(\sigma_k\cdot\nabla(-\Delta)^{a}\omega)=-\divg_\omega (\sigma_k\cdot\nabla(-\Delta)^{a}\omega)+\<\sigma_k\cdot\nabla(-\Delta)^{a}\omega, (-\Delta)^{a}\omega\>=0,
		\end{equation*}
		which implies that \eqref{infinitesimal} holds. The proof is complete. 
	\end{proof}

	\begin{remark}
		\begin{enumerate}
			\item 	For deterministic initial data $\omega_0 \in H^{2b+a}$, the existence of probabilistically weak solutions for the given initial data follows from the a priori estimates obtained in the above proposition together with a standard Galerkin approximation argument. Besides, with some assumptions on the noise,  pathwise uniqueness can be established by adopting the strategy from Barbato et al. \cite{BBF14}, see Appendix \ref{linear-unique} for details. Consequently, the Yamada-Watanabe theorem ensures the existence of a unique probabilistically strong solution. 
			\item 	For random initial data  Law ($\omega_0$)= $(-\Delta)^{-\frac{a}{2}-b}\mu$, the weak existence of a stationary solution to \eqref{scalar eqn} follows a similar compactness argument as in \cite[Theorem 2.2.3]{ASC90}. Here, a progressively measurable process $\omega$ is stationary if that  for every $t\geq 0$, Law($\omega(t)$)$=\mu$. 
		\end{enumerate}
	\end{remark}

	Now we introduce the scaling limit for the pseudo-transport noise. Let 
	\begin{equation*}
		e_k(x) =\sqrt{2}(2\pi)^{-\frac{d}{2}}
		\begin{cases}
			\cos( k\cdot x), & k \in \Z^d_+, \\
			\sin( k\cdot x), & k \in \Z^d_-,
		\end{cases}
		\quad x \in \mathbb{T}^d,
	\end{equation*} 
	where $\Z_+^d$ and $\Z_-^d$ form a partition of $\Z_0^d$ with $\Z_-^d = -\Z_+^d$. Then $\{e_k: k\in \Z_0^d\}$ constitute a complete orthonormal basis of $L^2(\Tb^d)$, the space of $\R$-valued square integrable functions with zero average. For each $k\in \Z_-^d$, let $\{a_{k,i}: \,i=1,\cdots, d-1\}$ be an orthonormal basis of $k^\perp$. Now for $\nu>0,\;\gamma\in [0,d/2]$, define 
	\begin{equation}\label{theta^N}
		\begin{split}
			\theta_k^N=|k|^{-a-\gamma}\mathbb{I}_{\{|k|\leq N\}},\;
			\sigma_{k,i}^N (x)=\frac{\sqrt{C_d\nu}}{|\theta^N|_{h^{a}}} \theta_k^Na_{k,i}e_k(x),\quad N\in \N_+,\;k\in\Z_0^d,  
		\end{split}
	\end{equation}
	where $C_d= (2\pi)^d\frac{2d}{d-1}$ and the normalization constant is
	\begin{equation*}
		|\theta^N|_{h^{a}}:=\Big(\sum_k |k|^{2a} (\theta_k^N)^2\Big)^{1/2}\sim
		\begin{cases}
			N^{d/2-\gamma}, & \gamma\in [0,d/2),\\
			\sqrt{log N}, & \gamma=d/2.
		\end{cases}
	\end{equation*} 
	The covariance matrix $Q^N(x,y):=\sum_{k,i} \sigma_{k,i}^N (x)\otimes \sigma_{k,i}^N (y)$ is homogeneous and has expression
	\begin{equation}\label{Q^N}
		Q^N(x,y)=Q^N(x-y)=(2\pi)^{-d} \frac{C_d\nu}{|\theta^N|_{h^{a}}^2}\sum_{k\in \Z_0^d} (\theta_k^N)^2 \Big(I_d-\frac{k\otimes k}{|k|^2}\Big) e^{ ik (x-y)}.
	\end{equation}
	In particular, as $N\rightarrow 0$, we have
	\begin{equation}\label{Q^N-property}
		\sup\limits_{k\in\Z_0^d}\big|\widehat{Q^N}(k)|k|^{2a}\big|\lesssim_{d} \frac{\nu}{|\theta^N|_{h^{a}}^2}\sup\limits_{k\in\Z_0^d} (\theta_{k}^N)^2|k|^{2a}= \frac{\nu}{|\theta^N|_{h^{a}}^2}\rightarrow 0.
	\end{equation}
	We will see that all conclusions in this paper depend only on the form of the covariance matrix $Q^N$. Therefore, in what follows we omit the subscript $i$ for simplicity, and continue to denote the vector fields by $\sigma_k^N$.
	
	\begin{remark}
		In the special case $a=0$, the covariance matrix $Q^N$ coincides with the covariance appearing in the standard scaling limit in \cite{FlaLuo20,LuoZhu21,Gal20}.
		There are also other choices of the sequence $\{\theta^N\}$ such that the results of this paper remain valid. For example, for $\gamma\in \R$, we can take $\theta_k^N=|k|^{-a-\gamma}\mathbb{I}_{\{N\leq|k|\leq 2N\}}$. Indeed, one can verify that $\big\{(-\Delta)^{a/2}\sigma_{k,i}^N\big\}$ coincides with the one considered in \cite{FlaLuo20, Gal20} for the scaling limit of transport noise.
	\end{remark}
	
	Now we can give the precise statement of the scaling limit.
	\begin{theorem}\label{scaling limit}
		Let $\omega^N$ be the solution to \eqref{scalar eqn} with $\sigma_k=\sigma_k^N$, we have the following:
		\begin{itemize}
			\item[(1).] Deterministic initial condition: If $\omega_0\in H^{2b+a}$, then $\omega^N$ converges in law, in $C([0,T];\mathcal{S}')$, to the solution of the (fractional) heat equation:
			\begin{equation*}
				\left\{
				\begin{aligned}
					& \partial_t \omega =-\nu(-\Delta)^{1+a}\omega,\\
					& \omega(0)=\omega_0.
				\end{aligned}
				\right.
			\end{equation*}
			\item[(2).] Random initial condition: If Law ($\omega_0$)= $(-\Delta)^{-\frac{a}{2}-b}\mu$, then $\omega^N$ converges in law, in $C([0,T];\mathcal{S}')$, to the stationary infinite dimensional Ornstein-Uhlenbeck process:
			\begin{equation}\label{sto-limit process}
				\left\{
				\begin{aligned}
					& \d \omega =-\nu(-\Delta)^{1+a}\omega + \sqrt{2\nu}(-\Delta)^{1/2-b}\d W,\\
					& \omega(0)=\omega_0,
				\end{aligned}
				\right.
			\end{equation}
			where $\d W$ is the space-time white noise.
		\end{itemize}
	\end{theorem}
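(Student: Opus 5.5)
Throughout I reduce to $b=0$, exactly as in the proof of Proposition \ref{conservation by noise}: apply $(-\Delta)^b$ to \eqref{scalar eqn} and work with $\omega^N_b:=(-\Delta)^b\omega^N$. This turns the deterministic datum into $\omega_0\in H^a$ and the random datum into Law $=(-\Delta)^{-a/2}\mu$, and the limit operators transform accordingly. With $b=0$, I rewrite the Stratonovich equation in Itô form,
\begin{equation*}
\d\omega^N=\tfrac12\sum_k \sigma^N_k\cdot\nabla(-\Delta)^a\big(\sigma^N_k\cdot\nabla(-\Delta)^a\omega^N\big)\,\d t+\sum_k\sigma^N_k\cdot\nabla(-\Delta)^a\omega^N\,\d W^k .
\end{equation*}
I then test against $\phi\in C^\infty(\Tb^d)$. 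The plan has three parts: tightness in $C([0,T];\mathcal S')$, convergence of the Itô corrector to $-\nu(-\Delta)^{1+a}$ (plus a negligible remainder), and identification of the martingale part.

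\textbf{Tightness.} In case (1) the conservation law of Proposition \ref{conservation by noise} gives $\|\omega^N(t)\|_{H^a}=\|\omega_0\|_{H^a}$ almost surely. In case (2) stationarity gives $\E\|\omega^N(t)\|_{H^{a-d/2-\delta}}^2<\infty$ uniformly in $N$ and $t$. I combine either bound with Kolmogorov/Aldous-type time increments of $\<\omega^N,\phi\>$:
\begin{itemize}
\item The drift term is bounded by $\|\omega^N\|$ times a smooth-norm of $\phi$, uniformly in $N$, because $\sum_k\|(-\Delta)^{a/2}\sigma_k^N\|_\infty^2\lesssim\nu$.
\item The martingale term's quadratic variation $\sum_k\<\omega^N,(-\Delta)^a(\sigma_k^N\cdot\nabla\phi)\>^2$ is bounded in the same way.
\end{itemize}
Tightness then follows by Mitoma's criterion.

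\textbf{Corrector.} Using duality, the drift tested against $\phi$ equals $\tfrac12\sum_k\<\omega^N,(-\Delta)^a\sigma_k\cdot\nabla(-\Delta)^a(\sigma_k\cdot\nabla\phi)\>$. I compute this in Fourier variables using \eqref{Q^N}. Writing $\sigma_k=\theta_k a_k e_k$, the product $e_k e_k$ splits into a zero-frequency part and a $\pm 2k$ part.
\begin{itemize}
\item The zero-frequency part gives $\sum_k(\theta^N_k)^2|\cdot|^{2a}$-weighted averages of $P_k^\perp$ acting on $\xi\otimes\xi$ at frequency $\xi$ of $\phi$. The normalization by $|\theta^N|_{h^a}^2$ and $C_d$ is designed so that this tends to $-\nu|\xi|^{2+2a}\hat\phi(\xi)$. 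The mechanism is that $|k+\xi|^{2a}\approx|k|^{2a}$ for $|k|\gg|\xi|$, and the angular average of $P_k^\perp$ is $\tfrac{d-1}{d}I$.
\item The remaining terms and the low-$|k|$ error are controlled by \eqref{Q^N-property}, i.e.\ $\sup_k|\widehat{Q^N}(k)||k|^{2a}\to0$.
\end{itemize}

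\textbf{Martingale part.} In case (1) the quadratic variation $\int_0^t\sum_k\<\omega^N,(-\Delta)^a(\sigma_k^N\cdot\nabla\phi)\>^2\d s$ is bounded by $\|\omega_0\|_{H^a}^2\sup_j|\widehat{Q^N}(j)||j|^{2a}\|\phi\|^2_{\text{smooth}}$, which tends to $0$. So the limit solves the deterministic fractional heat equation, whose uniqueness identifies the limit.

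In case (2) the quadratic variation does not vanish, and I expect this to be \textbf{the main obstacle}. It is a quadratic functional of the rough field $\omega^N$, and I must show it converges in $L^1$ to $2\nu t\|(-\Delta)^{1/2}\phi\|^2$ (for $b=0$). I will use stationarity to compute its mean exactly under the Gaussian law $(-\Delta)^{-a/2}\mu$; the mean already equals the target asymptotically, again by the angular-averaging computation. Then I bound its variance via the Gaussian fourth moment (Wick) formula. The off-diagonal contributions carry a factor $\sup_k(\theta_k^N)^2|k|^{2a}/|\theta^N|_{h^a}^2\to0$, as in \cite{FlaLuo20,Gal20}. Finally, the limit martingale problem characterizes the Ornstein–Uhlenbeck process \eqref{sto-limit process}, which gives the convergence in law.
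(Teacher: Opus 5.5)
Your plan is the paper's proof in all its main steps: the reduction to $b=0$, Mitoma's criterion applied to $\langle\omega^N,\phi\rangle$, and the Fourier computation of the It\^o corrector through $Q^N$ (Lemma \ref{Ito corrections limit}). It also uses the vanishing of the martingale in case (1) via $\sup_\xi|\widehat{Q^N}(\xi)||\xi|^{2a}\to0$ (Lemma \ref{det-martingale-limit}). In case (2), the mean of the quadratic variation is computed under the stationary Gaussian law and its variance is bounded by Wick's formula (Lemma \ref{sto-martingale-limit}).

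One precision on the corrector. The $\pm 2k$ contributions do not merely become small: they cancel identically once the $\cos$ and $\sin$ modes at $\pm k$ are paired. This is the identity $\sum_k\widehat{\sigma_k^N}(\xi-\eta)\otimes\widehat{\sigma_k^N}(\eta-\zeta)=(2\pi)^{d/2}\widehat{Q^N}(\xi-\eta)\mathbb{I}_{\xi=\zeta}$. Consequently $S^N$ is an exact Fourier multiplier, with symbol bounded by $C|\xi|^{2+2a+2|a|}$ uniformly in $N$. That, more than the $L^\infty$ bound on $(-\Delta)^{a/2}\sigma_k^N$ by itself, is what keeps the drift uniformly bounded when paired with the rough field in case (2).

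The one step that fails as you state it is the tightness of the martingale part in case (2). There $\omega^N(t)$ only lies in $H^{a-d/2-\delta}$. The quadratic form $\omega\mapsto\sum_k\langle\omega,(-\Delta)^a(\sigma_k^N\cdot\nabla\phi)\rangle^2$ is not bounded on that space uniformly in $N$.

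To see this, take $\phi=e_m$. Then $\sigma_k^N\cdot\nabla\phi$ lives at frequencies $\pm k\pm m$, so the form is essentially diagonal. Its norm on $H^{a-d/2-\delta}$ is of order $|\theta^N|_{h^a}^{-2}\sup_{|k|\le N}(\theta_k^N)^2|k|^{2a}|k|^{d+2\delta}\sim N^{2\delta}$, divided by $\log N$ when $\gamma=d/2$. The noise reaches frequency $N$, and there the stationary field is rough. So the quadratic variation is not bounded pathwise ``in the same way'' as the drift.

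The repair uses the Gaussian law of $\omega^N(t)$, which you exploit later anyway. Set $f_k=(-\Delta)^a(\sigma_k^N\cdot\nabla\phi)$ and $I^N(t)=\sum_k\langle\omega^N(t),f_k\rangle^2$. Then $\E\langle\omega^N(t),f_k\rangle^2=\|\sigma_k^N\cdot\nabla\phi\|_{H^a}^2$, and the sum over $k$ is at most $C\nu$ times a smooth norm of $\phi$, uniformly in $N$. Minkowski's inequality and equivalence of Gaussian moments give $\|I^N(t)\|_{L^p(\Omega)}\le\sum_k\|\langle\omega^N(t),f_k\rangle\|_{L^{2p}(\Omega)}^2\le C_p\,\E[I^N(t)]$. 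BDG, Jensen in time and stationarity then yield $\E|M^N_t(\phi)-M^N_s(\phi)|^p\lesssim|t-s|^{p/2}$ uniformly in $N$.

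This is exactly the role of the $L^p(\Omega)$ statement in Lemma \ref{sto-martingale-limit}, which the paper reduces to $p=2$ through the Wiener chaos argument. The same bound also gives the uniform integrability of $(M^N_t)^2$ that you need to pass to the limit in the martingale problem.
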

	
	\begin{remark}
		It is clear that the scaling limit results for transport noises in \cite{FlaLuo20,Gal20} are special case $a=b=0$ in Theorem \ref{scaling limit}. Moreover, we observe that the parameter $a$ modifies the dissipative part of the limit, while $b$ modifies the fluctuation part.
	\end{remark}
	
	The It\^o's form of \eqref{scalar eqn} reads
	\begin{equation*}
		\begin{split}
			\d \omega^N &=\sum_k (-\Delta)^{-b}\big( \sigma_k^N\cdot\nabla (-\Delta)^{a+b} \omega^N\big) \, \d W^k\\ &\quad+ \frac{1}{2}\sum_k (-\Delta)^{-b}\Big( \sigma_k^N\cdot\nabla (-\Delta)^{a} \big(\sigma_k^N\cdot\nabla (-\Delta)^{a+b}\omega^N\big)\Big)\,\d t \\
			&=:\d M^{N} + S^N(\omega^N)\,\d t.
		\end{split}
	\end{equation*}
	The  proof of Theorem \eqref{scaling limit} relies on analyzing the limiting behavior of the It\^o-Stratonovich corrections $S^N$ and the martingale part $M^N$ under different regularity assumptions on the underlying process. As noted in the proof of Proposition \ref{conservation by noise}, we may assume without loss of generality that $b=0$. The following lemma  describes the asymptotic behavior of the Itô-Stratonovich corrections.
	
	\begin{lemma}\label{Ito corrections limit}
		For every $r\in \R,\;\varphi\in C^\infty(\Tb^d)$, the following limit holds in $H^r(\Tb^d)$:
		\begin{equation*}
			\lim\limits_{N\rightarrow\infty}S^N(\varphi)=-\nu(-\Delta)^{1+a}\varphi.
		\end{equation*}
	\end{lemma}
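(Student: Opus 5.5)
With $b=0$, we have
\[
S^N(\varphi)=\tfrac12\sum_k \sigma_k^N\cdot\nabla(-\Delta)^a\bigl(\sigma_k^N\cdot\nabla(-\Delta)^a\varphi\bigr).
\]
The plan is to compute this in Fourier variables, using only the covariance $Q^N$ from \eqref{Q^N}. Since $\divg\sigma_k^N=0$, we can rewrite $\sigma_k\cdot\nabla g=\divg(\sigma_k g)$. For a smooth $\varphi$, we then expand $\sigma_k^N=\sum$ over single modes $k$ and $\varphi=\sum_l\hat\varphi(l)e_l$. The key point is to pair the two copies of $\sigma_k^N$ through $\sum_k\sigma_k^N(x)\otimes\sigma_k^N(y)=Q^N(x-y)$. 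The sum over $k$ contains both $\cos$ and $\sin$ modes with the same $\theta_k$, so the cross terms with frequency $\pm2k$ cancel; this is the standard identity from \cite{FlaLuo20,Gal20}. The resulting Fourier representation should be
\[
\widehat{S^N(\varphi)}(l)=-\frac{1}{2}\sum_{k}c_N(\theta_k^N)^2\,|l-k|^{2a}\,|l|^{2a}\,\bigl(l^{\top}P_k^\perp l\bigr)\hat\varphi(l)\cdot(\text{normalization}),
\]
with $c_N=C_d\nu/|\theta^N|_{h^a}^2$. Here $(-\Delta)^a$ acts once on $\varphi$ at frequency $l$ and once on the intermediate product at frequency $l-k$ (or $l+k$; symmetrize). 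In particular, $S^N$ is a Fourier multiplier $-\lambda_N(l)$. This step only requires bookkeeping, and I would do it via the symmetric expression with $Q^N$ to avoid choosing the sign partition $\Z^d_\pm$.

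It remains to show that $\lambda_N(l)\to\nu|l|^{2+2a}$ for each fixed $l$, and that $|\lambda_N(l)|\lesssim|l|^{m}$ uniformly in $N$ for some power $m$. Then $\|S^N\varphi+\nu(-\Delta)^{1+a}\varphi\|_{H^r}^2=\sum_l|l|^{2r}|\lambda_N(l)-\nu|l|^{2+2a}|^2|\hat\varphi(l)|^2\to0$ by dominated convergence, since $\hat\varphi$ decays rapidly. For fixed $l$, write $|l-k|^{2a}=|k|^{2a}(1+O(|l|/|k|))$ for $|k|\gg|l|$, with the bound $|l-k|^{2a}\lesssim \langle l\rangle^{2|a|}|k|^{2a}$ when $|k|\ge 2|l|$. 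The main term becomes
\[
\frac{C_d\nu}{|\theta^N|_{h^a}^2}\sum_{|k|\le N}|k|^{2a}(\theta_k^N)^2\,\tfrac12\,l^{\top}P_k^\perp l\,|l|^{2a}.
\]
The weights $|k|^{2a}(\theta_k^N)^2/|\theta^N|_{h^a}^2$ form a probability distribution on $\{|k|\le N\}$. By \eqref{Q^N-property}, each individual weight tends to $0$, so the mass escapes to infinity. Because $\gamma\le d/2$, the weight is spread isotropically over large shells. The average of $P_k^\perp$ over directions is $\frac{d-1}{d}I_d$, and with $C_d=(2\pi)^d\frac{2d}{d-1}$ and the $(2\pi)^{-d}$ normalization, this yields exactly $\nu|l|^{2}\cdot|l|^{2a}$. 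The error terms have size $O\bigl(\sum_k w_k^N|l|/|k|\bigr)$ plus the contribution from finitely many $|k|\le 2|l|$ (handled with care when $a<0$ and $k$ is near $l$). Both vanish, since the weights escape to infinity and each single weight goes to $0$.

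I expect the main obstacle to be the rigorous justification of the equidistribution of directions: namely, showing that $\sum_k w_k^N (k\otimes k)/|k|^2\to \frac1d I_d$. This follows by combining the radial symmetry of $\theta^N$ with the lattice symmetry $k\mapsto$ coordinate permutations and sign changes, which make the sum exactly a multiple of $I_d$ for every $N$. Once the trace is taken, the multiple is exactly $\frac1d$, so this step is in fact exact rather than asymptotic. The genuinely delicate point is then the uniform control of $|l-k|^{2a}$ against $|k|^{2a}$ when $a<0$ and $k$ is close to $l$. I would treat this by splitting $|k|\le2|l|$, where the term is bounded by $\max_k w_k^N\cdot\langle l\rangle^{C}\to0$, from $|k|>2|l|$, where a Taylor expansion applies.
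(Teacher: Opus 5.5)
Your proposal is correct and follows essentially the same route as the paper. Both compute $S^N$ as a Fourier multiplier via $Q^N$ and identify the limit through the exact isotropy identity $\sum_\eta(\theta^N_\eta)^2|\eta|^{2a}P^\perp_\eta=\frac{d-1}{d}|\theta^N|^2_{h^{a}}I_d$. Both then get pointwise convergence by splitting low and high noise modes, using \eqref{Q^N-property} and $|\xi-\eta|^{2a}/|\eta|^{2a}\to1$, and conclude in $H^r$ from the uniform polynomial bound \eqref{intermidiate} and the rapid decay of $\hat\varphi$. The only bookkeeping point to state explicitly is that the mode $k=l$ (zero intermediate frequency) drops out, as in the paper's restriction $\eta\neq\xi$, so $|l-k|\ge 1$ on all remaining terms.
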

	\begin{proof}
		Taking Fourier transform, we obtain, for every $\xi\in \Z_0^d$,
		\begin{equation*}
			\begin{split}
				2\widehat{S^{N}(\varphi)}(\xi)
				&=\sum_k\mathcal{F} \big[\sigma_k^N\cdot\nabla(-\Delta)^{a}(\sigma_k^N\cdot\nabla (-\Delta)^a\varphi)\big](\xi) \\
				&=(2\pi)^{-d}\sum_{k,\eta,\zeta} \widehat{\sigma_k^N}(\xi-\eta)\cdot i\eta\, |\eta|^{2a} \widehat{\sigma_k^N} (\eta-\zeta)\cdot i\zeta\, |\xi|^{2a}\hat{\varphi} (\zeta)
			\end{split}
		\end{equation*}
		Following \cite[Proposition 1.3]{LuoXieZhao24}\footnote{There, the factor $(2\pi)^{d/2}$ is missing; it comes from the convolution formula for the Fourier transform
			$\widehat{f * g} = (2\pi)^{d/2}\,\hat f\,\hat g .$}, we have
		\begin{equation*}
			\sum_{k} \widehat{\sigma_k^N}(\xi-\eta)\otimes \widehat{\sigma_k^N} (\eta-\zeta)=(2\pi)^{d/2}\widehat{Q^N}(\xi-\eta)\mathbb{I}_{\xi=\zeta}.
		\end{equation*}
		Substituting into above expression, we get
		\begin{align*}
			2\widehat{S_N(\varphi)}(\xi)
			&=-(2\pi)^{-d/2}\sum_{\eta\in \Z_0^d} \xi^T\widehat{Q^N}(\xi-\eta)\eta\, |\eta|^{2a}\: |\xi|^{2a}\hat{\varphi}(\xi)  \\
			&=-(2\pi)^{-d/2}\sum_{\eta\in \Z_0^d} \xi^T\widehat{Q^N}(\xi-\eta)\xi\, |\eta|^{2a}\: |\xi|^{2a}\hat{\varphi}(\xi)  \\
			&=-(2\pi)^{-d}\frac{C_d\nu}{|\theta^N|_{h^{a}}^2}\xi^T\bigg(\sum_{\eta\neq \xi}(\theta_{\eta}^N)^2 \Big(I_2-\frac{\eta\otimes\eta}{|\eta|^2}\Big)|\xi-\eta|^{2a}\bigg)\xi\:|\xi|^{2a}\hat{\varphi}(\xi),
		\end{align*}
		where we have used the specific expression of $Q^N$ in \eqref{Q^N}. On the other hand, we observe that
		\begin{equation*}
			-2\nu\widehat{(-\Delta)^{1+a}\varphi}(\xi)=-(2\pi)^{-d}\frac{C_d\nu}{|\theta^N|_{h^{a}}^2}\xi^T\bigg(\sum_{\eta\in \Z_0^d}(\theta_{\eta}^N)^2 \Big(I_2-\frac{\eta\otimes\eta}{|\eta|^2}\Big)|\eta|^{2a}\bigg)\xi\:|\xi|^{2a}\hat{\varphi}(\xi).
		\end{equation*}
		To show that $S^N(\varphi)$ and $-\nu(-\Delta)^{1+a}\varphi$ are close, it suffices to show that for every $\xi\in \Z_0^d$,
		\begin{equation*}
			R^N:=\frac{1}{|\theta^N|_{h^{a}}^2}\bigg| \sum_{\eta\neq \xi}(\theta_{\eta}^N)^2 \Big(I_2-\frac{\eta\otimes\eta}{|\eta|^2}\Big)\big(|\xi-\eta|^{2a}- |\eta|^{2a}\big)\bigg|\rightarrow 0, \quad \text{ as } N\rightarrow\infty.
		\end{equation*}
		For every $M>0$, $R^N$ is bounded by
		\begin{align*}
			R^N&\leq\frac{1}{|\theta^N|_{h^{a}}^2} \sum_{\eta\neq \xi}(\theta_{\eta}^N)^2 \big| |\xi-\eta|^{2a}- |\eta|^{2a} \big|  \\
			&= \frac{1}{|\theta^N|_{h^{a}}^2} \sum_{\eta\neq \xi} (\theta_{\eta}^N)^2|\eta|^{2a}\, \Big|\frac{|\xi-\eta|^{2a}}{|\eta|^{2a}}-1\Big| \\
			&\leq \frac{1}{|\theta^N|_{h^{a}}^2} \sum_{|\eta|\leq M}\cdots \quad+ \frac{1}{|\theta^N|_{h^{a}}^2} \sum_{|\eta|\geq M}\cdots=:R_{\leq M}^N+R_{\geq M}^N.
		\end{align*}
		We have the estimates
		\begin{equation*}
			R_{\leq M}^N\leq  \frac{\sup\limits_{|\eta|\leq M} (\theta_{\eta}^N)^2|\eta|^{2a}}{|\theta^N|_{h^{a}}^2}\sum_{|\eta|\leq M}\Big|\frac{|\xi-\eta|^{2a}}{|\eta|^{2a}}-1\Big|,\quad R_{\geq M}^N \leq \sup\limits_{|\eta|\geq M}\Big|\frac{|\xi-\eta|^{2a}}{|\eta|^{2a}}-1\Big|.
		\end{equation*}
		BY \eqref{Q^N-property}, it is obvious that for every $M>0, \lim\limits_{N\rightarrow\infty} R_{\leq M}^N=0$. Since the second term $R_{\geq M}^N$ converges to $0$ uniformly in $N$ as $M \to \infty$, we obtain $\lim\limits_{N\rightarrow\infty} R^N=0$. 
		
		Now we have proved that, for every $\xi\in \Z_0^d$, 
		\begin{equation*}
			\lim\limits_{N\rightarrow\infty}\big|\mathcal{F}\big[(S^N(\varphi))+\nu(-\Delta)^{1+a}\varphi\big] (\xi)\big|=0.
		\end{equation*}
		Hence the desired convergence follows from the fast decay at infinity of the Fourier transform of smooth functions. Indeed, for every $M>0$, we have
		\begin{align*}
			&\sum_{|\xi|\geq M} \big|\mathcal{F}\big[(S^N(\varphi))+\nu(-\Delta)^{1+a}\varphi\big] (\xi)\big|^2|\xi|^{2r}\\
			&\lesssim_{d,\nu}  \sum_{|\xi|\geq M}|\xi|^{4a+4+2r} |\hat{\varphi}(\xi)|^2\sup\limits_{\eta}\Big|\frac{|\xi-\eta|^{2a}}{|\eta|^{2a}}-1\Big|^2  \\
			&\lesssim_{d,\nu,a} \sum_{|\xi|\geq M}|\xi|^{4a+4|a|+4+2r} |\hat{\varphi}(\xi)|^2,
		\end{align*}
		where the last step follows from the fact that for every $\xi\in \Z_0^d$,
		\begin{equation}\label{intermidiate}
			\sup\limits_{\eta}\Big|\frac{|\xi-\eta|^{2a}}{|\eta|^{2a}}-1\Big|\lesssim_a |\xi|^{2|a|}.
		\end{equation}
		Therefore we obtain
		\begin{align*}
			\Big\| S^N(\varphi) + \nu(-\Delta^{1+a}\varphi) \Big\|_{H^r}^2&\leq \sum_{|\xi|\geq M} \big|\mathcal{F}\big[(S^N(\varphi))+\nu(-\Delta)^{1+a}\varphi\big] (\xi)\big|^2|\xi|^{2r}\\ &\quad+ C(d,\nu,a)\sum_{|\xi|\geq M}|\xi|^{4a+4|a|+4+2r} |\hat{\varphi}(\xi)|^2.
		\end{align*}
		The assertions follows from the analysis similar to that of $R^N$.
	\end{proof}
	
	For deterministic initial data $\omega_0\in H^a$, the martingale part $M^N$ vanishes in the limit.
	\begin{lemma}\label{det-martingale-limit}
		For every $\varphi\in C^\infty(\Tb^d)$, write 
		\begin{equation*}
			M_t^N(\varphi) = \sum_k \int_{0}^{t}\big\< \sigma_k^N\cdot\nabla (-\Delta)^{a} \omega^N,\varphi\big\> \, \d W^k,\quad \forall t\geq0.
		\end{equation*}
		Then for every $T>0$, $\big|\sup\limits_{t\in [0,T]}M_t^N(\varphi)\big|\rightarrow 0$ in probability as $N\rightarrow \infty$.
	\end{lemma}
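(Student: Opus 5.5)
Our approach is to estimate the quadratic variation of $M^N(\varphi)$ and apply the Burkholder--Davis--Gundy (or Doob) inequality. We keep the reduction $b=0$, so the conserved quantity is $\|\omega^N(t)\|_{H^{a}}=\|\omega_0\|_{H^a}$ $\Pb$-a.s., by Proposition \ref{conservation by noise}. The point is that the quadratic variation involves the covariance $Q^N$ only through the quantity in \eqref{Q^N-property}, which tends to $0$.

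\emph{Step 1 (rewriting the integrand).} Since $\divg\sigma_k^N=0$, integrating by parts gives $\<\sigma_k^N\cdot\nabla(-\Delta)^a\omega^N,\varphi\> = -\<(-\Delta)^a\omega^N,\sigma_k^N\cdot\nabla\varphi\>$. Write $f:=(-\Delta)^{a/2}\omega^N$, so that $\|f\|_{L^2}=\|\omega_0\|_{H^a}$. The summand then equals $-\<f,(-\Delta)^{a/2}(\sigma_k^N\cdot\nabla\varphi)\>$.

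\emph{Step 2 (quadratic variation).} The quadratic variation is
\[
\<M^N(\varphi)\>_T=\int_0^T\sum_k \big\<f(s),(-\Delta)^{a/2}(\sigma_k^N\cdot\nabla\varphi)\big\>^2\,\d s .
\]
In Fourier variables, $\sum_k \<f,g_k\>^2 = \sum_{\xi,\zeta}\hat f(\xi)\overline{\hat f(\zeta)}\sum_k \hat g_k(\xi)\overline{\hat g_k(\zeta)}$ with $g_k=(-\Delta)^{a/2}(\sigma_k^N\cdot\nabla\varphi)$. Since $\sigma_k^N$ has the single frequency pair $\pm k$, we have $\hat g_k(\xi)=|\xi|^{a}\sum_\eta \widehat{\sigma^N_k}(\xi-\eta)\cdot i\eta\,\hat\varphi(\eta)$. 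The diagonal identity quoted from \cite{LuoXieZhao24} in the proof of Lemma \ref{Ito corrections limit} gives $\sum_k\widehat{\sigma_k^N}(\xi-\eta)\otimes\overline{\widehat{\sigma_k^N}(\zeta-\eta')}\propto \widehat{Q^N}(\xi-\eta)\mathbb I_{\xi-\eta=\zeta-\eta'}$. Hence
\[
\sum_k\<f,g_k\>^2\lesssim \sum_{\eta,\eta'}|\hat\varphi(\eta)||\eta||\hat\varphi(\eta')||\eta'|\sum_{\xi}|\hat f(\xi)||\hat f(\xi-\eta+\eta')|\,|\xi|^{a}|\xi-\eta+\eta'|^{a}\,|\widehat{Q^N}(\xi-\eta)|.
\]
We then use $|\widehat{Q^N}(\xi-\eta)|\lesssim \epsilon_N|\xi-\eta|^{-2a}$, where $\epsilon_N:=\nu/|\theta^N|_{h^a}^2\to0$ by \eqref{Q^N-property}. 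Together with Peetre-type bounds $|\xi|^{a}|\xi-\eta+\eta'|^a|\xi-\eta|^{-2a}\lesssim \langle\eta\rangle^{|a|}\langle\eta'\rangle^{|a|}$ (cf. \eqref{intermidiate}), Cauchy--Schwarz in $\xi$ yields
\[
\sum_k\<f,g_k\>^2\lesssim \epsilon_N\,\|f\|_{L^2}^2\Big(\sum_\eta|\hat\varphi(\eta)|\langle\eta\rangle^{1+|a|}\Big)^2 .
\]
The last factor is finite because $\varphi$ is smooth. Therefore $\<M^N(\varphi)\>_T\lesssim_{\varphi,a} T\,\epsilon_N\|\omega_0\|_{H^a}^2$ almost surely.

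\emph{Step 3 (conclusion).} By BDG, $\E\sup_{t\le T}|M^N_t(\varphi)|^2\le 4\E\<M^N(\varphi)\>_T\to0$, so the supremum tends to $0$ in $L^2(\Omega)$ and hence in probability.

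The main obstacle is Step 2: handling the off-diagonal frequency structure and the weights $|\xi|^{a}$ for negative or positive $a$ so that a single factor $\sup_k|\widehat{Q^N}(k)||k|^{2a}$ can be extracted. If the Peetre bookkeeping turns out messy, an alternative is to move $(-\Delta)^{a/2}$ onto $\sigma_k^N\cdot\nabla\varphi$ with a commutator estimate. It is also cleaner to use that $Q^N$ is a positive-definite Fourier multiplier, bounding $\sum_k\<f,g_k\>^2\le \|f\|_{L^2}^2\,\|\sum_k g_k\otimes g_k\|_{op}$ and computing the operator norm via Schur's test in Fourier variables.
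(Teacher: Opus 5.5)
Your proposal is correct and follows essentially the same route as the paper. Both arguments control the quadratic variation (the paper via It\^o's isometry, you via BDG/Doob), expand it in Fourier variables with the diagonal covariance identity from \cite{LuoXieZhao24}, and apply Cauchy--Schwarz against the conserved $H^a$ norm. Both then use the Peetre-type bound \eqref{intermidiate} and extract the vanishing factor $\sup_\xi|\widehat{Q^N}(\xi)|\,|\xi|^{2a}$ from \eqref{Q^N-property}.
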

	
	\begin{proof}
		It suffices to check $\E\Big[\big|M_T^N(\varphi)\big|^2\Big]\rightarrow 0$ as $N\rightarrow 0$. By It\^o's isometry, we have
		\begin{equation}\label{det-ini-martingale}
			\E\Big[\big|M_T^N(\varphi)\big|^2\Big] = \sum_k \int_{0}^{T} \big|\big\<  (-\Delta)^{a} \omega^N,\sigma_k^N\cdot\nabla\varphi\big\>\big|^2\,\d t. 
		\end{equation}
		Taking Fourier transform, it follows that
		\begin{equation*}
			\begin{split}
				&(2\pi)^{d}\sum_k\big|\big\<  (-\Delta)^{a} \omega^N,\sigma_k^N\cdot\nabla\varphi\big\>\big|^2  \\ 
				&= -\sum_{k,\xi,\xi',\eta,\eta'}\widehat{\sigma_k^N}(\xi-\eta)\cdot \eta \hat{\varphi}(\eta)|\xi|^{2a}\overline{\widehat{\omega^N}}(\xi)\, \widehat{\sigma_k^N}(\xi'-\eta')\cdot \eta' \hat{\varphi}(\eta')|\xi'|^{2a}\overline{\widehat{\omega^N}}(\xi')\\
				&=-\sum_{\xi+\xi'=\eta+\eta'}\eta^T \widehat{Q^N}(\xi-\eta)\eta'|\xi|^{2a}\overline{\widehat{\omega^N}}(\xi)|\xi'|^{2a}\overline{\widehat{\omega^N}}(\xi')\hat{\varphi}(\eta)\hat{\varphi}(\eta').
			\end{split}
		\end{equation*}
		For fixed $\eta,\eta\in \Z_0^d$, we have the estimate by H\"older's inequality:
		\begin{align*}
			&\Big|\sum_{\xi+\xi'=\eta+\eta'} \widehat{Q^N}(\xi-\eta)|\xi|^{2a}\overline{\widehat{\omega^N}}(\xi)|\xi'|^{2a}\overline{\widehat{\omega^N}}(\xi')\Big|  \\
			&= \bigg|\sum_{\xi+\xi'=\eta+\eta'} \widehat{Q^N}(\xi-\eta)|\xi-\eta|^{2a}|\xi|^{a}\overline{\widehat{\omega^N}}(\xi)|\xi'|^{a}\overline{\widehat{\omega^N}}(\xi')\times \frac{|\xi|^{a}|\xi'|^{a}}{|\xi-\eta|^{2a}}\bigg| \\
			&\leq \|\omega^N\|_{H^a}^2\sup\limits_{\xi}\big|\widehat{Q^N}(\xi)|\xi|^{2a}\big| \sup\limits_{\xi}\frac{|\xi|^{a}|\xi-\eta-\eta'|^{a}}{|\xi-\eta|^{2a}} \\
			&\lesssim_a \|\omega_0\|_{H^a}^2 |\eta|^{|a|}|\eta'|^{|a|}\sup\limits_{\xi}\big|\widehat{Q^N}(\xi)|\xi|^{2a}\big|,
		\end{align*} 
		where the last step follows from Proposition \ref{conservation by noise} (which implies $\|\omega^N\|_{H^a}=\|\omega_0\|_{H^a},\Pb\text{-a.s.}$) and the estimate \eqref{intermidiate}:
		\begin{equation*}
			\sup\limits_{\xi}\frac{|\xi|^{a}|\xi-\eta-\eta'|^{a}}{|\xi-\eta|^{2a}}\leq\sup\limits_{\xi}\frac{|\xi|^{a}}{|\xi-\eta|^{a}}\sup\limits_{\xi}\frac{|\xi-\eta-\eta'|^{a}}{|\xi-\eta|^{a}} \lesssim_a |\eta|^{|a|}|\eta'|^{|a|}.
		\end{equation*}
		Substituting all above into \eqref{det-ini-martingale}, we arrive at
		\begin{equation*}
			\E\Big[\big|M_T^N(\varphi)\big|^2\Big] \lesssim_{d,a}T\|\omega_0\|_{H^a}^2\Big(\sum_{\eta}|\hat{\varphi}(\eta)|\,|\eta|^{1+|a|}\Big)^2 \sup\limits_{\xi}\big|\widehat{Q^N}(\xi)\big|\,|\xi|^{2a}.
		\end{equation*}
		Since $\sup\limits_{\xi}\big|\widehat{Q^N}(\xi)|\xi|^{2a}\big|\rightarrow 0$ as $N\rightarrow 0$ by \eqref{Q^N-property}, the proof is thus complete.
	\end{proof}
	
	For random initial data Law($\omega_0$)=$(-\Delta)^{-\frac{a}{2}}\mu$, the quadratic variation of the martingale part $M^N$ will converge to its expectation.
	
	\begin{lemma}\label{sto-martingale-limit}
		For every $\varphi_1,\varphi_2\in C^\infty(\Tb^d)$, let the quadratic variation be given by
		\begin{align*}
			\d\,\<M^N(\varphi_1),M^N(\varphi_2)\>&=\sum_k \big\< \sigma_k^N\cdot\nabla (-\Delta)^{a} \omega^N,\varphi_1\big\>\big\< \sigma_k^N\cdot\nabla (-\Delta)^{a} \omega^N,\varphi_2\big\>\,\d t\\
			&=:I^N_{\varphi_1,\varphi_2}\,\d t.
		\end{align*} 
		Then for every $1\leq p<\infty$, $I^N_{\varphi_1,\varphi_2}\rightarrow -2\nu\<\varphi_1,\Delta\varphi_2\>$ in $L^p(\Omega)$ as $N\rightarrow \infty$.
	\end{lemma}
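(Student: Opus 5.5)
Throughout, $b=0$ and $\omega^N$ is the stationary solution with $\mathrm{Law}(\omega^N(t))=(-\Delta)^{-a/2}\mu$ for every $t$; write $\omega=\omega^N(t)$ at a fixed time. The plan is an $L^2(\Omega)$ argument: first show $\E[I^N_{\varphi_1,\varphi_2}]\to-2\nu\<\varphi_1,\Delta\varphi_2\>$, then $\mathrm{Var}(I^N)\to0$, and finally upgrade to every $L^p$, $p<\infty$, by Gaussian hypercontractivity. By stationarity all three steps are pointwise-in-time computations under the Gaussian law $(-\Delta)^{-a/2}\mu$.

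Set $\psi:=(-\Delta)^{a/2}\omega$, which is white noise. Using $\divg\sigma_k^N=0$,
\[
\big\<\sigma_k^N\cdot\nabla(-\Delta)^a\omega,\varphi_j\big\>=-\big\<\psi,(-\Delta)^{a/2}(\sigma_k^N\cdot\nabla\varphi_j)\big\>,
\]
so
\[
I^N_{\varphi_1,\varphi_2}=\sum_k\<\psi,f_k^1\>\<\psi,f_k^2\>,\qquad f_k^j:=(-\Delta)^{a/2}(\sigma_k^N\cdot\nabla\varphi_j).
\]
This is a second-chaos element plus its mean. Since $\psi$ is white noise,
\[
\E[I^N]=\sum_k\<f_k^1,f_k^2\>=\sum_k\big\<\sigma_k^N\cdot\nabla\varphi_1,(-\Delta)^a(\sigma_k^N\cdot\nabla\varphi_2)\big\>.
\]
In Fourier variables this equals
\[
(2\pi)^{-d}\frac{C_d\nu}{|\theta^N|^2_{h^a}}\sum_{\eta,\zeta}(\theta^N_{\eta-\zeta})^2\,\zeta^TP^\perp_{\eta-\zeta}\zeta\,|\eta|^{2a}\,\hat\varphi_1(\zeta)\overline{\hat\varphi_2(\zeta)},
\]
using the same diagonal identity for $\sum_k\widehat{\sigma_k^N}\otimes\widehat{\sigma_k^N}$ as in Lemma \ref{Ito corrections limit}. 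Substitute $\eta'=\eta-\zeta$ and replace $|\eta'+\zeta|^{2a}$ by $|\eta'|^{2a}$. The error is controlled exactly as $R^N$ in Lemma \ref{Ito corrections limit}, via \eqref{Q^N-property} and \eqref{intermidiate}; the $\zeta$-sum is harmless because $\varphi_j$ are smooth. The isotropic averaging of $P^\perp_{\eta'}$ built into $C_d$ then gives $2\nu|\zeta|^2\hat\varphi_1\overline{\hat\varphi_2}$, i.e.\ $-2\nu\<\varphi_1,\Delta\varphi_2\>$, with the same constant bookkeeping as in the identity for $-2\nu\widehat{(-\Delta)^{1+a}\varphi}$.

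For the variance, Wick's formula gives
\[
\mathrm{Var}(I^N)=\sum_{k,l}\Big(\<f_k^1,f_l^1\>\<f_k^2,f_l^2\>+\<f_k^1,f_l^2\>\<f_k^2,f_l^1\>\Big).
\]
Each term has the form $\|\sum_k f_k^i\otimes f_k^j\|_{L^2\otimes L^2}^2$, so it suffices to bound the Hilbert--Schmidt norm of the operator $A^{ij}:=\sum_k f_k^i\otimes f_k^j$. In Fourier variables its kernel is
\[
\widehat{A^{ij}}(\eta,\eta')=\sum_\zeta |\eta|^a|\eta'|^a\,\big(\eta-\zeta\big)\text{-indexed }\widehat{Q^N}(\eta-\zeta)\ \text{contracted with}\ \zeta\hat\varphi_i(\zeta),
\]
up to the diagonal constraint $\eta-\eta'=\zeta-\zeta'$. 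Squaring and summing, the bound is
\[
\|A^{ij}\|_{HS}^2\lesssim\sup_\xi\big(|\widehat{Q^N}(\xi)||\xi|^{2a}\big)\cdot\sum_\xi|\widehat{Q^N}(\xi)||\xi|^{2a}\cdot C(\varphi_i,\varphi_j).
\]
This uses \eqref{intermidiate} to trade $|\eta|^{2a}$ for $|\eta-\zeta|^{2a}$ at the cost of powers of $|\zeta|$. The second factor is bounded by a constant times $\nu$ by the normalization, and the first factor tends to $0$ by \eqref{Q^N-property}. Hence $\mathrm{Var}(I^N)\to0$, and together with the first step $I^N\to-2\nu\<\varphi_1,\Delta\varphi_2\>$ in $L^2(\Omega)$.

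For $p>2$: $I^N-\E I^N$ lies in the second Wiener chaos of $\psi$, where all $L^p$ norms are equivalent to the $L^2$ norm, so $L^p$ convergence follows. For $p<2$ it follows directly from the $L^2$ convergence.

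The main obstacle is the Hilbert--Schmidt estimate. The danger is that the $\sum_k$ over modes up to $N$ must produce a single factor $\sup_\xi|\widehat{Q^N}(\xi)||\xi|^{2a}$ that vanishes, and not only the bounded normalization. I would first try Cauchy--Schwarz in the shift variable, pairing one $\widehat{Q^N}$ with its sup and leaving the other summed; if that fails, I would fall back on a Schur test on the kernel.
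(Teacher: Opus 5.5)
Your proposal is correct and follows the paper's route. The mean is the It\^o--Stratonovich correction $-2\<\varphi_1,S^N((-\Delta)^{-a}\varphi_2)\>$: the paper simply cites Lemma \ref{Ito corrections limit} here, whereas you redo its Fourier computation. The variance is handled by the Isserlis--Wick formula and bounded by $\sup_\xi|\widehat{Q^N}(\xi)||\xi|^{2a}\cdot\sum_\xi|\widehat{Q^N}(\xi)||\xi|^{2a}$, exactly as in the paper's estimate of $J_2$. Your $\|A^{12}\|_{\mathrm{HS}}^2$ is precisely that $J_2$, and your first-choice Cauchy--Schwarz pairing is the one that works. Chaos equivalence then gives every $p$. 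One small correction: the second Wick term $J_3$ equals $\<A^{12},(A^{12})^{\top}\>_{\mathrm{HS}}$ rather than a squared norm, so you need Cauchy--Schwarz to bound it by $\|A^{12}\|_{\mathrm{HS}}^2$.
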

	
	\begin{proof}
		Since $I^N_{\varphi_1,\varphi_2}$ belongs to the first two Wiener chaos, we only need to prove the special case $p=2$.
		For simplicity of notation, we set $\psi^N=(-\Delta)^a\omega^N$.
		On the one hand, taking expectation, we get
		\begin{equation*}
			\begin{split}
				\E\big[I^N_{\varphi_1,\varphi_2}\big] 
				&= \sum_k \E\big[\<\sigma_k^N\cdot\nabla\varphi_1,\psi^N\> \<\sigma_k^N\cdot\nabla\varphi_2,\psi^N\>\big]\\
				&=  \sum_k \big\< \sigma_k^N\cdot\nabla\varphi_1,(-\Delta)^{a}(\sigma_k^N\cdot\nabla\varphi_2) \big\> \\
				&= -\sum_k \big\<\varphi_1, \sigma_k^N\cdot\nabla (-\Delta)^{a}(\sigma_k^N\cdot\nabla\varphi_2)\big\>= -2\big\< \varphi_1, S^{N}((-\Delta)^{-a}\varphi_2)\big\>.
			\end{split}
		\end{equation*}
		By Lemma \ref{Ito corrections limit}, we have
		\begin{equation*}
			\E\big[I^N_{\varphi_1,\varphi_2}\big] \rightarrow -2\nu\<\varphi_1,\Delta\varphi_2\>.
		\end{equation*}
		On the other hand, we have
		\begin{equation*}
			\begin{split}
				&\quad\E\big[\big(I^N_{\varphi_1,\varphi_2}\big)^2\big]\\
				&=\sum_{k_1,k_2}\E\big[\<\sigma_{k_1}^N\cdot\nabla\varphi_1,\psi^N\> \<\sigma_{k_1}^N\cdot\nabla\varphi_2,\psi^N\>
				\<\sigma_{k_2}^N\cdot\nabla\varphi_1,\psi^N\>\<\sigma_{k_2}^N\cdot\nabla\varphi_2,\psi^N\>\big].
			\end{split} 
		\end{equation*}
		By the Isserlis-Wick theorem,
		\begin{equation*}
			\begin{split}
				&\quad\E\big[\big(I^N_{\varphi_1,\varphi_2}\big)^2\big]
				\\
				&=\sum_{k_1,k_2} \E\big[\<\sigma_{k_1}^N\cdot\nabla\varphi_1,\psi^N\> \<\sigma_{k_1}^N\cdot\nabla\varphi_2,\psi^N\>\big]
				\E\big[\<\sigma_{k_2}^N\cdot\nabla\varphi_1,\psi^N\>\<\sigma_{k_2}^N\cdot\nabla\varphi_2,\psi^N\>\big] \\
				&\quad + \sum_{k_1,k_2} \E\big[\<\sigma_{k_1}^N\cdot\nabla\varphi_1,\psi^N\> \<\sigma_{k_2}^N\cdot\nabla\varphi_1,\psi^N\>\big]
				\E\big[\<\sigma_{k_1}^N\cdot\nabla\varphi_2,\psi^N\>\<\sigma_{k_2}^N\cdot\nabla\varphi_2,\psi^N\>\big] \\
				&\quad + \sum_{k_1,k_2} \E\big[\<\sigma_{k_1}^N\cdot\nabla\varphi_1,\psi^N\> \<\sigma_{k_2}^N\cdot\nabla\varphi_2,\psi^N\>\big]
				\E\big[\<\sigma_{k_1}^N\cdot\nabla\varphi_2,\psi^N\>\<\sigma_{k_2}^N\cdot\nabla\varphi_1,\psi^N\>\big] \\
				&=J_1+J_2+J_3.
			\end{split}
		\end{equation*}
		It is obvious that 
		\begin{equation*}
			J_1=\Big(\sum_k \E\big[\<\sigma_k^N\cdot\nabla\varphi_1,\psi^N\> \<\sigma_k^N\cdot\nabla\varphi_2,\psi^N\>\big]\Big)^2= \Big(\E\big[I^N_{\varphi_1,\varphi_2}\big]\Big)^2.
		\end{equation*}
		Hence we arrive at
		\begin{equation*}
			\mathrm{Var}(I^N_{\varphi_1,\varphi_2})= \E\big[\big(I^N_{\varphi_1,\varphi_2}\big)^2\big]-\Big(\E\big[I^N_{\varphi_1,\varphi_2}\big]\Big)^2=J_2+J_3.
		\end{equation*}
		Our task now is to show that both $J_2$ and $J_3$ vanish in the limit. Recalling that $Law(\omega^N_t)=(-\Delta)^{-\frac{a}{2}}\mu$ for every $t\geq 0,\; N\geq0$ and $\psi^N=(-\Delta)^{a}\omega^N$, we get
		\begin{align*}
			J_2&=\sum_{k_1,k_2}\big\<\sigma_{k_1}^N\cdot\nabla\varphi_1,(-\Delta)^{a}\big(\sigma_{k_2}^N\cdot\nabla\varphi_1\big)\big\> \big\<\sigma_{k_1}^N\cdot\nabla\varphi_2,(-\Delta)^{a}\big(\sigma_{k_2}^N\cdot\nabla\varphi_2\big)\big\> \\
			&=(2\pi)^{-2d}\sum_{\substack{k_1,k_2,\\\xi,\eta,\zeta,\xi',\eta',\zeta'}} \widehat{\sigma_{k_1}^N}(\xi-\eta)\cdot i\eta\,\hat{\varphi}_1(\eta)\,|\xi|^{2a}
			\overline{\widehat{\sigma_{k_2}^N}(\xi-\zeta)\cdot i\zeta\,\hat{\varphi}_1(\zeta)} \\
			&\quad \times \widehat{\sigma_{k_1}^N}(\xi'-\eta')\cdot i\eta'\,\hat{\varphi}_2(\eta)\,(2\pi|\xi'|)^{2a}
			\overline{\widehat{\sigma_{k_2}^N}(\xi'-\zeta')\cdot i\zeta'\,\hat{\varphi}_2(\zeta')}.
		\end{align*}
		Following \cite[Proposition 1.3]{LuoXieZhao24} again, we have
		\begin{equation*}
			\begin{split}
				\sum_{k_1} \widehat{\sigma_{k_1}^N}(\xi-\eta)\otimes \widehat{\sigma_{k_1}^N}(\xi'-\eta') =(2\pi)^{d/2}\widehat{Q^N}(\xi-\eta)\mathbb{I}_{\xi+\xi'=\eta+\eta'},\\
				\sum_{k_2} \widehat{\sigma_{k_2}^N}(\xi-\zeta)\otimes \widehat{\sigma_{k_2}^N}(\xi'-\zeta') =(2\pi)^{d/2}\widehat{Q^N}(\xi-\zeta)\mathbb{I}_{\xi+\xi'=\zeta+\zeta'}.
			\end{split}
		\end{equation*}
		Hence we arrive at
		\begin{equation*}
			\begin{split}
				J_2=\sum_{\substack{\xi+\xi'\\=\eta+\eta'\\=\zeta+\zeta'}}
				\eta^T \widehat{Q^N}(\xi-\eta)\eta'\; \zeta^T \widehat{Q^N}(\xi-\zeta)\zeta'\; |\xi|^{2a}|\xi'|^{2a} \hat{\varphi}_1(\eta)\overline{\hat{\varphi}_1}(\zeta)\hat{\varphi}_2(\eta')\overline{\hat{\varphi}_2}(\zeta')
			\end{split}
		\end{equation*}
		Similar to the proof of Lemma \ref{Ito corrections limit}, we can assume that $\varphi_1=e_m$ and $\varphi_2=e_n$ for some $n,m\in \Z_0^d$, then the above equation reduces to
		\begin{align*}
			J_2=\sum_{\xi+\xi'=m+n}m^T\widehat{Q^N}(\xi-m)n\;m^T\widehat{Q^N}(\xi-m)n\; |\xi|^{2a}|\xi'|^{2a}.
		\end{align*}
		Recalling the definition of $Q^N$ in \eqref{Q^N}, we obtain
		\begin{align*}
			|J_2|&\lesssim_{m,n}\frac{1}{|\theta^N|_{h^{a}}^4}\sum_{\xi} |\theta_{\xi-m}^N|^{4}\;|\xi|^{2a}|m+n-\xi|^{2a} \\
			&\leq \frac{1}{|\theta^N|_{h^{a}}^4}\sup_{\xi}|\theta_{\xi-m}^N|^2|\xi|^{2a}\sum_{\xi} |\theta_{\xi-m}^N|^{2}\;|m+n-\xi|^{2a}\\
			&\lesssim_{m,n} \dfrac{1}{|\theta^N|_{h^{a}}^2}\sup_{\xi}|\theta_{\xi}^N|^2|\xi|^{2a}\rightarrow 0,\quad \text{ as } N \rightarrow \infty.
		\end{align*}
		The term $J_3$ can be handled in muh the same way. Now we have proved that, as $N\rightarrow\infty$,
		\begin{align*}
			\E\big[I^N_{\varphi_1,\varphi_2}\big]\rightarrow -2\nu\<\varphi_1,\Delta \varphi_2\>\quad \text{and}\quad
			\mathrm{Var}(I^N_{\varphi_1,\varphi_2})\rightarrow 0,
		\end{align*}
		which implies $I^N_{\varphi_1,\varphi_2}\rightarrow -2\nu\<\varphi_1, \Delta\varphi_2\>$ in $L^2(\Omega)$.
	\end{proof}
	
	With Lemmas \ref{Ito corrections limit}-\ref{sto-martingale-limit} in hand, we are now ready to prove Theorem \ref{scaling limit}.
	\begin{proof}[Proof of Theorem \ref{scaling limit}]
		Without loss of generality, we can always set $b=0$. By Mitoma's criterion \cite{Mit83}, the assertion follows if we can show that for every $\varphi\in C^\infty(\Tb^d)$, the sequence $\{\<\omega^N,\varphi\>\}$ is tight in $C([0,T];\R)$ and the corresponding convergence holds.
		\begin{enumerate}
			\item For every $\varphi\in C^\infty(\Tb^d)$, by integration by parts, we have
			\begin{equation*}
				\d \<\omega^N,\varphi\> = \d M_t^N(\varphi) + \<\omega^N,(-\Delta)^aS^N(-\Delta)^{-a}\varphi\>.
			\end{equation*}
			Combining Proposition \ref{conservation by noise}, Lemma \ref{Ito corrections limit}-\ref{det-martingale-limit}, the proof is a standard application of the compactness method and  completely analogous to that of \cite[Theorem 2.3 ]{FGL21}, and we omit the details.
			\item For every smooth cylinder functional $F$, recall that the generator $\Lc^N$ of $\omega^N$ is defined as 
			\begin{equation*}
				\begin{split}
					\Lc^N F(\omega) &= \frac{1}{2}\sum_k \big\<\sigma_k^N\cdot\nabla(-\Delta)^{a}\omega,D\<\sigma_k^N\cdot\nabla(-\Delta)^{a}\omega, DF(\omega)\>\big\> \\
					&= \frac{1}{2}\sum_k\Big\<  \sigma_k^N\cdot\nabla (-\Delta)^{a} \big(\sigma_k^N\cdot\nabla (-\Delta)^{a}\omega\big),DF(\omega)\Big\> \\
					&\quad +\frac{1}{2}\sum_k  D^2F(\omega)\Big[\sigma_k^N\cdot\nabla (-\Delta)^{a}\omega,\sigma_k^N\cdot\nabla (-\Delta)^{a}\omega\Big] \\
					&=\big\<S^N(\omega),DF(\omega)\big\> + \frac{1}{2}\sum_{k,l} I_{e_k,e_l}^ND_{e_ke_l}^2F(\omega),
				\end{split}
			\end{equation*}
			where $I_{e_k,e_l}^N:=\sum_k \big\< \sigma_k^N\cdot\nabla (-\Delta)^{a} \omega^N,\varphi_1\big\>\big\< \sigma_k^N\cdot\nabla (-\Delta)^{a} \omega^N,\varphi_1\big\>$. We observe that by Lemma \ref{Ito corrections limit} and \ref{sto-martingale-limit}, formally taking the limit $N\rightarrow \infty$, we obtain
			\begin{equation*}
				\begin{split}
					\Lc^N F(\omega) &\rightarrow -\nu\big\<(-\Delta)^{1+a}\omega,DF(\omega)\big\> - \sum_{k,l}\nu \<e_k,\Delta e_l\> D_{e_ke_l}^2F(\omega) \\
					&=-\nu\big\<(-\Delta)^{1+a}\omega,DF(\omega)\big\> + \nu D^2F(\omega)\big[(-\Delta)^{1/2}\dot{W},(-\Delta)^{1/2}\dot{W}\big],
				\end{split}
			\end{equation*}
			which is exactly the generator $\Lc^\infty$ of the limit process \eqref{sto-limit process}. The above heuristic argument can be made rigorous by following the approach of \cite[Section 2]{FlaLuo20}, where a similar result is proved using a tightness argument, and we omit the details.\qedhere
		\end{enumerate}
	\end{proof}
	
	\begin{remark}
		From the above proof, it is evident that the results of Theorem \ref{scaling limit} can be extended to a more general setting. Specifically, we assume the following conditions on the operators:
		\begin{enumerate}
			\item $A$ is a strictly positive Fourier multiplier that acts as a continuous linear operator on $\mathcal{S}'(\Tb^d)$.
			\item $B$ is an invertible continuous linear operator on $\mathcal{S}'(\Tb^d)$.
		\end{enumerate}
		Under these assumptions, the vector fields $\sigma_k^N$ are defined analogously to \eqref{theta^N}, with the weight $|k|^{-a}$ replaced by $\hat{A}(k)^{-1/2}$, where $\{\hat{A}(k)\}_{k \in \Z_0^d}$ denotes the Fourier coefficients of $A$.
	    Let $\omega^N$ be the weak solution to the equation
	    \begin{equation*}
	    	\left\{
	    	\begin{aligned}
	    		& \d \omega^N  + \sum_k B^{-1}\big(\sigma_k^N\cdot\nabla AB\omega^N\big)\,\circ\d W^k = 0,\\
	    		& \omega^N(0)=\omega_0.
	    	\end{aligned}
	    	\right.
	    \end{equation*}
        The following results hold:
       \begin{enumerate}
       \item If $A^{\frac{1}{2}}B\omega_0\in L^2$, then $\omega^N$ converges in law, in $C([0,T];\mathcal{S}')$ to $\omega$, which is the unique solution to $\omega$ solving 
       \begin{equation*}
           \left\{
       	   \begin{aligned}
       	       & \partial_t \omega =\nu B^{-1}\Delta AB\omega,\\
       		   & \omega(0)=\omega_0.
       	   \end{aligned}
       	   \right.
       \end{equation*}
       \item If $\mu:=$ Law($\omega_0$) is a centered Gaussian measure whose covariance is given by
       \begin{equation*}
       	    \int \<f,\omega\>\,\<g,\omega\>\, \mu(\d\omega) = \big\<f,(BAB^*)^{-1}g\big\>, \quad \text{ for every } f,g\in C^\infty,
       \end{equation*}
       then the stationary solution $\omega^N$ converges in law, in $C([0,T];\mathcal{S}')$ to a stationary process $\omega$, which is the unique solution to
       \begin{equation*}
       	\left\{
       	\begin{aligned}
       		& \d \omega =\nu B^{-1}\Delta AB\omega + \sqrt{2\nu}(B(-\Delta)B^*)^{\frac{1}{2}}\,\d W,\\
       		& \omega(0)=\omega_0.
       	\end{aligned}
       	\right.
       \end{equation*}
       \end{enumerate}
	\end{remark}
	
	\section{Delayed blow up for NS equations}\label{NS-I}
	In this section, we prove that some pseudo-transport noise can delay blow-up in the NS equations. In contrast to transport noise \cite{FlaLuo21,Agr26,Lan24}, for which delay of blow-up requires sufficiently large noise intensity, our main result Theorem \ref{Delay blow up-NS} holds without any restriction on the noise intensity.
	
	Consider the stochastic Navier-Stokes equation on $\Tb^3$ perturbed by the pseudo-transport noise:
	\begin{equation}\label{NS-u}
		\left\{
		\begin{aligned}
			& \d u + \Pi (u\cdot\nabla u)\,\d t =\sum_k \Lc_k u \circ \d W^k+\Delta u\,\d t,\\
			& u(0)=u_0\in H^{2b+a},
		\end{aligned}
		\right.
	\end{equation}
	where $\circ$ denotes the Stratonovich integral, $\Lc_ku= (-\Delta)^{-b}\Pi\big( \sigma_k\cdot\nabla (-\Delta)^{a+b} u\big)$, $\Pi$ is the Leray projection,  $\{\sigma_k\}$ is a finite family of smooth divergence-free vector field, and $a,b \in \R$ are parameters satisfying $2b+a>\frac{1}{2}$. 
	\begin{definition}\label{NS-def sol}
		Given a filtered probability space $(\Omega,\Fc,(\Fc_t),\Pb)$ and a sequence of independent $(\Fc_t)$-Brownian motion $\{W^k\}$, we say an $(\Fc_t)$-progressively measurable process $u$ is a strong solution to \eqref{NS-u} if the following holds:
		\begin{itemize}
			\item[(1).] $u\in C([0,T]; H^{2b+a})\cap L^2(0,T; H^{2b+a+1}),\:\Pb$-a.s.
			\item[(2).] For every divergence-free vector field $\varphi\in C^\infty (\Tb^3;\R^3)$, $\Pb$-a.s. the following identity holds:
			\begin{equation*}
				\begin{split}
					\<u(t),\varphi\> &=\<u_0,\varphi\>-\int_{0}^t\<u\cdot\nabla u, \varphi\>\,\d s + \<u,\Delta \varphi\>\,\d t\\
					&\quad- \sum_k\int_{0}^t\big\<u, (-\Delta)^{2b+a}\Lc_k (-\Delta)^{-2b-a}\varphi\big\> \circ \d W_s^k,\quad \text{ for all } t\in [0,T].
				\end{split}
			\end{equation*}
		\end{itemize}
	\end{definition}
	\begin{remark}
		\leavevmode
		\begin{itemize}
			\item Due to the high regularity of the strong solution, the Stratonovich integral is well defined and equation \eqref{NS-u} can be written into It\^o's form:
			\begin{align*}
				\d u + \Pi (u\cdot\nabla u)\,\d t =\sum_k \Lc_k u\,\d W^k 
				+\frac{1}{2}\sum_k \Lc_k^2 u\,\d t,
			\end{align*}
			where $\Lc_k^2u=(-\Delta)^{-b}\Pi\big( \sigma_k\cdot\nabla (-\Delta)^{a}\Pi\big( \sigma_k\cdot\nabla (-\Delta)^{a+b} u\big)\big)$.
			\item It is easy to check that the noise preserves the $H^{2b+a}$ norm path-wise.
		\end{itemize}
	\end{remark}
	
	Now we choose $\sigma_k=\sigma_k^N$ as in \eqref{theta^N} where the noise intensity $\nu > 0$ is involved and obtain
	\begin{equation}\label{NS-u_n}
		\left\{
		\begin{aligned}
			& \d u^N + \Pi (u^N\cdot\nabla u^N)\,\d t =\sum_k \Lc_k^N u^N  \circ \d W^k +\Delta u^N\,\d t,\\
			& u^N(0)=u_0\in H^{2b+a},
		\end{aligned}
		\right.
	\end{equation}
	where $\Lc_k^N u=(-\Delta)^{-b}\Pi\big( \sigma_k^N\cdot\nabla (-\Delta)^{a+b} u\big)$. 
	The main result of this section is that pseudo-differential transport noise can delay blow-up in the Navier-Stokes equations. 
	
	\begin{theorem}\label{Delay blow up-NS}
		Let $\frac{1}{4}\leq a<\frac{1}{2}<2b+a$ and $\nu>0$. Fix $R_0>0$ and $0<\epsilon<1$, then there exists $N_0=N_0(\nu,R_0,\epsilon)$, such that for all $N>N_0$ and initial data $u_0$ satisfying 
		\begin{equation*}
			u_0\in H^{2b+a} \quad \text{and}\quad \|u_0\|_{H^{2b+a}}\leq R_0, 
		\end{equation*}
		the equation \eqref{NS-u_n} admits a unique strong solution which is spatially smooth and, with a probability no less than $1-\epsilon$, exists globally on $[0,\infty)$.
	\end{theorem}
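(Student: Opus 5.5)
We follow the scaling-limit strategy of \cite{FlaLuo21,Agr26}, adapted to the conserved quantity $H^{2b+a}$. Write $s_0=2b+a>\frac12$ and $X=H^{s_0}$.

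\textbf{Step 1: local well-posedness and a blow-up criterion for fixed $N$.} For fixed $N$ the noise operators $\Lc_k^N$ have order $1+2a<2$ in the Itô form, and the Itô correction $\frac12\sum_k(\Lc_k^N)^2$ has order $2+2a$. The latter is formally dissipative: by the commutator argument behind \eqref{intro-higher-estimate} it combines with the martingale term so that only an order-$2a$ remainder survives. Since $a<\frac12$, this remainder is strictly lower order than the viscosity $\Delta$.

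We introduce a cut-off $\chi_R(\|u\|_{X})$ on the nonlinearity and run Galerkin approximations. Energy estimates in $H^s$ for $s\ge s_0$ then give, with $N$-dependent constants,
\[
\d\|u\|_{H^s}^2+\|u\|_{H^{s+1}}^2\d t\lesssim_N \|u\|_{H^{s+2a}}^2\d t+\dots+\d M.
\]
This closes because $2a<1$, by interpolation. Pathwise uniqueness plus Yamada--Watanabe give a unique strong solution of the cut-off equation. Removing the cut-off through the stopping times $\tau_R=\inf\{t:\|u^N\|_X\ge R\}$ yields a unique maximal strong solution with lifetime $\tau^N$, and
\[
\tau^N<\infty\ \Rightarrow\ \sup_{t<\tau^N}\|u^N\|_X=\infty,
\]
because $s_0>\frac12$ is subcritical. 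Spatial smoothness for $t>0$ follows from parabolic smoothing in the higher-$s$ estimates. This is presumably the content of Proposition~\ref{GWP-NS-cut-off}.

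\textbf{Step 2: the limit equation.} Let $\bar u$ solve
\[
\partial_t\bar u+\Pi(\bar u\cdot\nabla\bar u)=\Delta\bar u-\tfrac{3\nu}{5}(-\Delta)^{1+a}\bar u,\qquad \bar u(0)=u_0 .
\]
Since $1+a\ge\frac54$, this is the Lions regime, so $\bar u$ is global and smooth. Moreover, for $\|u_0\|_X\le R_0$ there is a uniform bound $\sup_{t\le T}\|\bar u(t)\|_{H^{s}}\le C(R_0,T,\nu)$, and a uniform decay statement: $\|\bar u(t)\|_{X}$ is small at time $T=T(R_0,\nu)$ by energy decay.

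We fix a threshold $\delta$ below which the deterministic small-data argument applies to \eqref{NS-u_n} for every $N$. Noise preserves the $X$-norm, and viscosity dominates the order-$2a$ noise remainder, which gives a uniform small-data global existence in $X$.

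\textbf{Step 3: convergence and the main obstacle.} We consider the stopped processes $u^N(\cdot\wedge\tau_R^N)$ with $R=C(R_0)+1$. Uniformly in $N$ they are bounded in $L^\infty X\cap L^2H^{s_0+1}$; this uses the pathwise conservation of $X$ by the noise plus viscosity, and the nonlinearity is controlled under the cut-off. Time regularity in negative Sobolev spaces then gives tightness in $C([0,T];H^{s_0-\eta})$.

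Passing to the limit with Lemmas~\ref{Ito corrections limit}--\ref{det-martingale-limit}, suitably adapted to the Leray-projected vector setting, which produces the factor $\frac35$, identifies every limit point as $\bar u$. Since $\bar u$ is deterministic, the convergence holds in probability. We argue by contradiction over a sequence $u_0^N$ in the ball, using compactness of the initial data in $H^{s_0-\eta}$ and continuous dependence of $\bar u$, to get uniformity in $u_0$.

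Hence $\Pb\big(\sup_{t\le T\wedge\tau_R^N}\|u^N-\bar u\|_{H^{s_0-\eta}}>\delta'\big)\to0$. We choose $\eta$ small so that $s_0-\eta>\frac12$ is still subcritical. On the complementary event, $u^N$ stays below the threshold $R$, so $\tau^N>T$, and $\|u^N(T)\|_{H^{s_0-\eta}}$ is small. Step 2, applied in $H^{s_0-\eta}$ and combined with the strong Markov property, then gives global existence with probability at least $1-\epsilon$ for $N>N_0$.

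The main obstacle is this step. Uniform-in-$N$ bounds exist only at the level of $X$ and not above, by \eqref{no uniform smooth}, so convergence holds only below $X$. The whole scheme therefore depends on working in the subcritical space $H^{s_0-\eta}$ and on making both the blow-up criterion and the small-data theory hold there uniformly in $N$.
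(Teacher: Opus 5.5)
Your architecture matches the paper's: a cut-off, $N$-uniform bounds from the pathwise $H^{2b+a}$ identity, and a scaling limit to the hyperviscous equation, which is global since $1+a\ge\frac54$. It then uses small-data continuation after a long time $T$. The last step, however, has a genuine gap. You only obtain smallness of $u^N(T)$ in $H^{s_0-\eta}$ and then apply ``Step~2 in $H^{s_0-\eta}$''. But the $N$-independent small-data theory (Proposition~\ref{small-GWP}) exists only in $H^{2b+a}$, and for a specific reason. There, by antisymmetry, the martingale integrand $\langle \Lc_k^N u,u\rangle_{H^{2b+a}}$ vanishes, and so does the It\^o correction plus quadratic variation term. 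What remains is a pathwise inequality $\dot x\le -x+Cx^{p}$. In any $H^s$ with $s\neq 2b+a$ these terms do not cancel. You are left with commutator terms and a nonvanishing martingale, whose bounds depend on the smoothness of $\sigma_k^N$ and hence on $N$ (the $\lesssim_N$ in \eqref{intro-higher-estimate}). Nothing in your argument gives an $N$-independent threshold in $H^{s_0-\eta}$. You name this as the main obstacle but do not resolve it.

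The missing idea is to get smallness in $H^{2b+a}$ itself, at a random time, from the uniform $L^2(0,T;H^{2b+a+1})$ bound you already have. The paper proves convergence in $L^2(0,T;H^{2b+a})\cap C([0,T];H^{2b+a-\delta})$, by compactness from \eqref{bound1} and \eqref{time-C^0.5}. It takes $T$ large so that $\|\bar u\|_{H^{2b+a}}\le\delta_0/2$ on $[T-1,T]$, using \eqref{s-exp}. On the good event this gives some $t(\omega)\in[T-1,T]$ with $\|u^N(t(\omega))\|_{H^{2b+a}}\le\delta_0$. Because the small-data argument is pathwise, it continues the solution from this non-stopping time without any Markov property. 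Equivalently, you could interpolate $H^{s_0}$ between $H^{s_0-\eta}$, where you have smallness, and $H^{s_0+1}$, where you have a pathwise $L^2_t$ bound.

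A secondary flaw: you put the cut-off and stopping time in $X$ itself. Closeness to $\bar u$ in $C([0,T];H^{s_0-\eta})$ does not imply $\|u^N\|_X<R$, so $\tau^N>T$ does not follow as written. The paper cuts off in $H^{2b+a-\delta}$, the norm in which convergence is uniform in time, so the cut-off is automatically inactive on the good event. Otherwise you need an extra pathwise $X$-energy bound, with the nonlinearity controlled by the $H^{s_0-\eta}$ norm and $R$ taken larger.
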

	
	The proof of Theorem \ref{Delay blow up-NS} is quite long and relies heavily on the scaling limit results obtained in the previous section.
	\subsection{Well-posedness for stochastic NS equations with cut-off}    
	The results in this subsection remain valid for all sufficiently smooth and divergence-free vector field $\{\sigma_k\}$, so we omit the superscript $N$. 
	To obtain strong solutions global in time, we cut off the nonlinear term. For $R>0$, let $f_R: \R\rightarrow [0,1]$ be a smooth non-increasing function taking value $1$ on $[0,R]$ and $0$ on $[R,\infty)$. Fix a number $\delta\in (0,2b+a-\frac{1}{2})$, consider
	\begin{equation}\label{NS-cut off}
		\d u +  
		f_R\big(\|u\|_{H^{2b+a-\delta}}\big)\Pi(u\cdot\nabla u)\,\d t
		=\sum_k  \Lc_k u \circ \d W^k + \Delta u\,\d t.
	\end{equation}
	One can actually show that when $a<\frac{1}{2}$, \eqref{NS-cut off} admits a unique global strong solutions to. In particular, the solution is smooth in the spatial variables.
	
	\begin{proposition}\label{GWP-NS-cut-off}
		Fix $a<\frac{1}{2}<2b+a$. For every initial data $u_0\in H^{2b+a}$, there exists a unique strong solution to \eqref{NS-cut off} in the sense of Definition \ref{NS-def sol}. Moreover, the solution instantaneously gains regularity in time and space 
		\begin{equation*}
			u\in C_t^{1/2-}C_x^{\infty},\;\Pb\text{-a.s. for every } t>0,\; x\in \Tb^3.
		\end{equation*} 
		In particular, It\^o's formula can be applied to $\|u\|_{H^{2b+a}}^2$ directly, yielding
		\begin{equation*}
			\sup_{t\in [0,T]}\|u(t)\|_{H^{2b+a}}^2 + \int_{0}^{T} \|u(t)\|_{H^{2b+a+1}}^2\,\d t\leq C\big(\|u_0\|_{H^{2b+a}},R,\delta,T\big),\quad \Pb\text{-a.s.}
		\end{equation*}
		where the upper bound is independent of the choice of $\{\sigma_k\}$.
	\end{proposition}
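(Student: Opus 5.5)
Set $s_0:=2b+a$. The plan is: Galerkin approximation with uniform energy bounds, compactness to get a martingale solution, pathwise uniqueness plus Yamada--Watanabe to get a strong solution, and finally a bootstrap for smoothness.

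\textbf{Galerkin scheme and the $H^{s_0}$ bound.} Let $P_n$ be the projection onto Fourier modes $|k|\le n$. Consider the finite dimensional SDE obtained by replacing $\Lc_k$ by $P_n\Lc_kP_n$ and the nonlinearity by $f_R(\|P_nu\|_{H^{s_0-\delta}})P_n\Pi(u\cdot\nabla u)$. Its coefficients are locally Lipschitz and the noise is conservative, so it has a global solution $u_n$.

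Apply It\^o's formula to $\|u_n\|_{H^{s_0}}^2$, equivalently work with $v_n=(-\Delta)^{s_0/2}u_n$ in $L^2$. Three contributions appear.
\begin{itemize}
\item The Stratonovich noise contributes exactly zero. In Stratonovich form the differential is $2\<(-\Delta)^{a}(-\Delta)^{b}u,\Pi(\sigma_k\cdot\nabla(-\Delta)^{a+b}u)\>\circ \d W^k$, and this vanishes because $\divg\sigma_k=0$ and $\Pi$ is self-adjoint on divergence-free fields. This is the computation of Proposition \ref{conservation by noise}, and $P_n$ commutes with all the Fourier multipliers.
\item The viscosity gives $-2\|u_n\|_{H^{s_0+1}}^2$.
\item The cut-off nonlinearity is bounded, via the standard product estimate
\[
|\<u\cdot\nabla u,u\>_{H^{s_0}}|\lesssim \|u\|_{H^{s_0-\delta}}^{\theta}\|u\|_{H^{s_0+1}}^{2-\theta'}\cdots .
\]
Since $s_0-\delta>\tfrac12$, a Kato--Ponce/Sobolev interpolation yields $\lesssim \|u\|_{H^{s_0-\delta}}\,\|u\|_{H^{s_0}}^{\alpha}\|u\|_{H^{s_0+1}}^{2-\alpha}$ for some $\alpha>0$.
\end{itemize}
On the support of $f_R$ we have $\|u\|_{H^{s_0-\delta}}\le R$. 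Young's inequality and Gronwall then give the pathwise bound in the statement, with a constant depending only on $\|u_0\|_{H^{s_0}},R,\delta,T$ and not on $\{\sigma_k\}$. The bound is pathwise because the martingale term is identically zero.

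\textbf{Time regularity, compactness and passage to the limit.} For time regularity, write the equation in It\^o form. The corrector $\tfrac12\sum_k\Lc_k^2u$ has order $2+2a$ relative to $H^{s_0}$, and $\Lc_k u$ has order $1+2a$. Since the $\sigma_k$ are fixed smooth fields, these terms are controlled in a sufficiently negative Sobolev space. This gives bounds on $u_n$ in $W^{\beta,p}(0,T;H^{-m})$ for $\beta<\tfrac12$. Aubin--Lions plus Skorokhod then give a martingale solution in $C([0,T];H^{s_0-})\cap L^2H^{s_0+1-}$. Lower semicontinuity upgrades this to $L^\infty H^{s_0}\cap L^2H^{s_0+1}$, and continuity in $H^{s_0}$ follows from the It\^o formula identity.

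\textbf{Pathwise uniqueness.} Take the difference $w=u^1-u^2$ of two solutions and estimate $\|w\|_{H^{s_0-\delta}}^2$ or $\|w\|_{L^2}^2$. The noise no longer cancels at norms other than $H^{s_0}$. The commutator argument behind \eqref{intro-higher-estimate} gives
\[
\d\|w\|_{H^{s}}^2\le C_\sigma\|w\|_{H^{s+2a}}^2\,\d t+\d M .
\]
The condition $a<\tfrac12$ ensures $s+2a<s+1$, so this term is absorbed by the viscous gain $\|w\|_{H^{s+1}}^2$ through interpolation.

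The cut-off difference $f_R(\|u^1\|)-f_R(\|u^2\|)$ is Lipschitz in $\|w\|_{H^{s_0-\delta}}$. This suggests estimating $w$ directly in $H^{s_0-\delta}$ and using the $L^2H^{s_0+1}$ bounds on $u^i$ for the nonlinear terms. A stochastic Gronwall argument gives $w=0$, and Yamada--Watanabe gives the strong solution.

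\textbf{Smoothing bootstrap (main obstacle).} Repeat the It\^o estimate at levels $s>s_0$ weighted by $t^{m}$. The commutator estimate again shows that the noise costs only $\|u\|_{H^{s+2a}}^2$ with $2a<1$, so parabolic smoothing gains $H^{s+1}$ at each step and yields $u\in C^\infty_x$ for $t>0$. The $C_t^{1/2-}$ regularity comes from Kolmogorov applied to the martingale part.

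The delicate point is the commutator estimate controlling the It\^o correction against the martingale quadratic variation at $H^s$ for $s\neq s_0$, with the Leray projection inside. I would adapt the proof of \cite{CFH19}: write $(-\Delta)^{s/2}$ versus $(-\Delta)^{s_0/2}$ and exploit the cancellation of the top-order terms, leaving $\|u\|_{H^{s+2a}}^2$.
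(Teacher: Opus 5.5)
The overall plan matches the paper's: Galerkin with the noise exactly conservative in $H^{s_0}$, a pathwise bound, compactness, uniqueness, and a bootstrap. Getting the pathwise bound for the limit by lower semicontinuity is fine. There is, however, a genuine gap, in the sentence ``continuity in $H^{s_0}$ follows from the It\^o formula identity''. This is precisely the assertion of the statement that It\^o's formula applies to $\|u\|_{H^{2b+a}}^2$.

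For the limit you only know $u\in L^\infty H^{s_0}\cap L^2H^{s_0+1}$, hence $\Lc_ku\in L^2_tH^{s_0-2a}$ and $\Lc_k^2u\in L^2_tH^{s_0-1-4a}$. When $0<a<\frac12$, the diffusion coefficient need not lie in $L^2_tH^{s_0}$ and the It\^o--Stratonovich corrector need not lie in $L^2_tH^{s_0-1}$. So the variational It\^o formula \cite[Theorem 4.2.5]{LR15} is not available in $H^{s_0}$. Passing to the limit in the Galerkin identity does not help either: it only gives an energy \emph{inequality}, because $u_n\rightharpoonup u$ only weakly in $L^2H^{s_0+1}$. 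The formal cancellation $\langle \Lc_ku,u\rangle_{H^{s_0}}=0$ is not enough on its own.

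The paper flags exactly this case as needing extra work and handles it as follows:
\begin{itemize}
\item It applies It\^o's formula to $\|J_\epsilon u\|_{H^{s_0}}^2$, where $J_\epsilon$ is a mollifier.
\item It proves $\|[J_\epsilon,\Lc_k]u\|_{H^{s_0}}\lesssim\epsilon^{1-2a}\|u\|_{H^{s_0+1}}$ and $\|[[J_\epsilon,\Lc_k],\Lc_k]u\|_{H^{s_0-2a}}\lesssim\epsilon^{1-2a}\|u\|_{H^{s_0+1}}$. The second bound requires a double-commutator estimate through the Leray projection (Appendix A).
\item Hence every noise contribution vanishes as $\epsilon\to0$. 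The resulting energy equality, together with weak continuity, gives $u\in C([0,T];H^{s_0})$.
\end{itemize}
This is the delicate point, not the commutator bound at $s\neq s_0$, which follows from standard symbol calculus. An alternative would be to combine smoothing on $(0,T]$ with the lower-semicontinuous bound $\|u(t)\|_{H^{s_0}}^2\le\|u_0\|_{H^{s_0}}^2+Ct$ and weak continuity at $t=0$. You would have to say this explicitly, and the bootstrap itself would have to be run on approximations and transferred by uniqueness.

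Your choice to prove uniqueness at a lower level is a legitimate alternative to the paper's uniqueness at $H^{s_0}$. In the paper's version the noise drops out, but the mollified It\^o formula is needed again. In yours the noise does not cancel but costs only $\|w\|_{H^{s+2a}}^2$, which viscosity absorbs. Your version still has to be made rigorous, and the level must be chosen correctly:
\begin{itemize}
\item Take $V=H^{s_0+1}$ and $V^*=H^{2s-s_0-1}$. The classical It\^o formula for $\|w\|_{H^s}^2$ then requires $s\le s_0-2a$, for both the diffusion and the corrector.
\item Controlling the cut-off difference $|f_R(\|u^1\|_{H^{s_0-\delta}})-f_R(\|u^2\|_{H^{s_0-\delta}})|\lesssim\|w\|_{H^{s_0-\delta}}$ through the viscous gain needs $s+1>s_0-\delta$.
\item Consequently $H^{s_0-\delta}$ is admissible only if $\delta\ge 2a$, and $L^2$ only if $2a\le s_0<1+\delta$.
\end{itemize}
Take instead $s\in(s_0-\delta-1,\,s_0-2a]$, which is nonempty since $2a<1$. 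With that choice your uniqueness argument goes through with the standard It\^o formula. It does not, however, remove the need for the $H^{s_0}$ energy equality above.
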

	\begin{proof}
		We start from the a priori estimates. Let $\Lambda=(-\Delta)^{1/2}$ and for $m\in \R$, $\<\cdot,\cdot\>_{H^m}$ denote the inner product on $H^s$. That is, $\<f,g\>_{H^m}=\<\Lambda^mf, \Lambda^mg\>$. we note that $\Lc_k$ is is anti-symmetric with respect to the $H^{2b+a}$ inner product, i.e., $\<\Lc_kf,g\>_{H^{2b+a}}=-\<f,\Lc_kg\>_{H^{2b+a}}$. For $m\geq 0$, applying $\Lambda^{m}$ to \eqref{NS-cut off}, we get 
		\begin{align*}
			&\d \Lambda^mu +  
			f_R\big(\|u\|_{H^{2b+a-\delta}}\big)\Pi\Lambda^m(u\cdot\nabla u)\,\d t
			\\
			&=\sum_k  \Lambda^m\Lc_k u \, \d W^k +\frac{1}{2}\sum_k  \Lambda^m\Lc_k^2 u\,\d t + \Delta \Lambda^mu\,\d t.
		\end{align*}
		Let $n=2b+a$, by It\^o's formula, we formally get
		\begin{equation}\label{Ito}
			\begin{split}
				\d \|u\|_{H^{n+m}}^2 &= -2\|u\|_{H^{n+m+1}}^2\,\d t -2 	f_R\big(\|u\|_{H^{n-\delta}}\big)\<\Lambda^m(u\cdot\nabla u), \Lambda^mu\>_{H^n}\,\d t\\ &\quad+2\sum_{k}\big\<\Lambda^m\Lc_ku,\Lambda^m u\big\>_{H^n}\,\d W^k \\
				&\quad+\sum_{k} \Big(\big\<\Lambda^m\Lc_k^2u,\Lambda^m u\big\>_{H^n} + \big\<\Lambda^m\Lc_ku,\Lambda^m\Lc_k u\big\>_{H^n}\Big)\,\d t\\
				&=: -2\|u\|_{H^{n+m+1}}^2\,\d t-2 I\,\d t+ 2\d M +\sum_kJ_k\,\d t.
			\end{split}
		\end{equation}
		We now perform a term-by-term analysis.
		\begin{itemize}
			\item Nonlinear term $I$.  Since we have cut off the nonlinearity in a subcritical space, it is classical to control it. More specifically, by  the cancellation $\<u\cdot\nabla\Lambda^{m+n}u, \Lambda^{m+n}u\>=0$, we have
			\begin{align*}
				|\<\Lambda^{m+n}(u\cdot\nabla u), \Lambda^{m+n}u\>|& = \big|\big\<[\Lambda^{m+n},u\cdot\nabla]u,\Lambda^{m+n}u\big\>\big| \\
				&\leq \|[\Lambda^{m+n},u\cdot\nabla]u\|_{L^{3/2}}\|\Lambda^{m+n}u\|_{L^3}.
			\end{align*} 
			By the Kato-Ponce commutator estimate \cite{KatPon88}, we have 
			\begin{align*}
				\|[\Lambda^{m+n},u\cdot\nabla]u\|_{L^{3/2}} \lesssim \|\Lambda^{m+n}u\|_{L^3} \|\nabla u\|_{L^3}.
			\end{align*}
			By Sobolev embedding $H^{1/2}(\Tb^3)\hookrightarrow L^3(\Tb^3)$, we arrive at 
			\begin{equation*}
				|I|\lesssim  f_R\big(\|u\|_{H^{n-\delta}}\big) \|u\|_{H^{3/2}} \|u\|_{H^{m+n+1/2}}^2.
			\end{equation*}
			By the interpolation between $H^{n-\delta}$ and $H^{m+n+1}$, we get 
			\begin{equation*}
				\|u\|_{H^{m+n+1/2}}^2 \leq \|u\|_{H^{n-\delta}}^{\frac{1}{1+\delta+m}} \|u\|_{H^{m+n+1}}^{\frac{1+2\delta+2m}{1+\delta+m}} 
			\end{equation*}
			With out loss of generality, we assume $n-\delta<3/2<m+n+1$, then still by the interpolation  between $H^{n-\delta}$ and $H^{m+n+1}$, we have
			\begin{equation*}
				\|u\|_{H^{3/2}}\leq \|u\|_{H^{n-\delta}}^{\frac{-1/2+m+n}{1+\delta+m}} \|u\|_{H^{m+n+1}}^{\frac{3/2-(n-\delta)}{1+\delta+m}}.
			\end{equation*}
			Combining these inequalities gives 
			\begin{equation*}
				|I|\lesssim f_R\big(\|u\|_{H^{n-\delta}}\big) \|u\|_{H^{n-\delta}}^{\frac{1/2+m+n}{1+\delta+m}} \|u\|_{H^{m+n+1}}^{2-\frac{n-\delta-1/2}{1+\delta+m}}.
			\end{equation*}
			Noting that $2-\frac{n-\delta-1/2}{1+\delta+m}<2$ by assumption $n-\delta>1/2$, then Young's inequality yields 
			\begin{equation}\label{I-bdd}
				|I|\leq \frac{1}{4} \|u\|_{H^{m+n+1}}^2 + C(R,\delta,m,n)
			\end{equation}
			\item Noise induced term $J_k$.  We argue using commutator estimates similar to these in \cite[Section 4.4]{CFH19}. Let $S_k$ denote the commutator $[\Lambda^m, \Lc_k]$ and $T_k$ denote the commutator $[S_k, \Lc_k]$. By classical theory of pseudo-differential operators (c.f. \cite{Hor07}), $S_k$ (resp. $T_k$) is of order $m+2a$ (resp. $m+4a$). Recalling that $\Lc^k$ is anti-symmetric on $H^n$, we have
			\begin{align*}
				\<\Lambda^m\Lc_k^2u,\Lambda^m u\big\>_{H^n} &= \< S_k\Lc_ku,\Lambda^m u\big\>_{H^n} + \<\Lc_k\Lambda^m\Lc_k u,\Lambda^m u\big\>_{H^n}\\
				&=\< S_k\Lc_ku,\Lambda^m u\big\>_{H^n} - \<\Lambda^m\Lc_k u,\Lc_k\Lambda^m u\big\>_{H^n}.
			\end{align*}
			Hence we have 
			\begin{align*}
				J_k &= \< S_k\Lc_ku,\Lambda^m u\big\>_{H^n} + \<\Lambda^m\Lc_k u,S_k u\big\>_{H^n} \\
				&=\< S_k\Lc_ku,\Lambda^m u\big\>_{H^n} + \<S_k u,S_k u\big\>_{H^n} + \<\Lc_k\Lambda^m u,S_k u\big\>_{H^n} \\
				&= \< S_k\Lc_ku,\Lambda^m u\big\>_{H^n} + \<S_k u,S_k u\big\>_{H^n} - \<\Lambda^m u,\Lc_kS_k u\big\>_{H^n} \\
				&= \<S_k u,S_k u\big\>_{H^n} + \< T_ku,\Lambda^m u\big\>_{H^n},
			\end{align*}
			which implies
			\begin{equation*}
				|J_k|\leq \|S_k u\|_{H^n}^2+\|T_k u\|_{H^{n-2a}}\|\Lambda^m u\|_{H^{n+2a}}\lesssim_k \|u\|_{H^{m+n+2a}}^2.
			\end{equation*}
			Since $a<\frac{1}{2}$ by assumption, it follows that 
			\begin{equation}\label{J_k-bdd}
				|J_k|\leq \frac{1}{2}\|u\|_{H^{m+n+1}}^2 +C_k\|u\|_{H^{m+n}}^2.
			\end{equation}
			\item Martingale term $M$. For the quadratic variation process $\<M\>$, the following holds:
			\begin{align*}
				\<M\>_T &=\sum_k\int_{0}^T \<\Lambda^m\Lc_k u,\Lambda^m u\>_{H^n}^2 \,\d t\\
				&=\sum_k\int_{0}^T \<S_k u,\Lambda^m u\>_{H^n}^2 \,\d t\lesssim_{\{\sigma_k\}}\int_{0}^T \|u\|_{H^{m+n+a}}^4\,\d t.
			\end{align*} 
			Hence by interpolation inequality and H\"older's inequality, we have
			\begin{equation}\label{M-bdd}
				\begin{split}
					\<M\>_T^{1/2}&\lesssim \bigg(\int_{0}^T\|u\|_{H^{m+n}}^2\|u\|_{H^{m+n+2a}}^2\,\d t\bigg)^{1/2}\\
					&\lesssim\sup_{t\in [0,T]}\|u\|_{H^{m+n}}\bigg(\int_{0}^T\|u\|_{H^{m+n+2a}}^2\,\d t\bigg)^{1/2} \\
					&\leq \epsilon \sup_{t\in [0,T]}\|u\|_{H^{m+n}}^2 + C_\epsilon \int_{0}^T\|u\|_{H^{m+n+2a}}^2\,\d t,
				\end{split}
			\end{equation}
			where $\epsilon>0$ is a constant to be determined later.
		\end{itemize}
		Now substituting the estimates \eqref{I-bdd}-\eqref{J_k-bdd} into \eqref{Ito}, integrating on time and taking expectation, by Gr\"onwall's inequality, we obtain
		\begin{equation*}
			\sup_{t\in [0,T]} \E\Big[\|u\|_{H^{m+n}}^2\Big] + \int_{0}^T \E\Big[\|u\|_{H^{m+n+1}}^2\Big]\,\d t\leq C\big(\|u_0\|_{H^{m+n}},R,\delta,m,n,\{\sigma_k\}\big).
		\end{equation*}
		To move the supremum over time inside the expectation, we exploit the Burkholder-Davis-Gundy inequality and \eqref{M-bdd}, and obtain
		\begin{align*}
			\E\bigg[\sup_{t\in [0,T]}\|u\|_{H^{m+n}}^2\bigg] &\lesssim \|u_0\|_{H^{m+n}}^2 + \int_{0}^T \E\Big[\|u\|_{H^{m+n}}^2\Big]\,\d t + \E\bigg[\sup_{t\in [0,T]}|M_t|\bigg] \\
			&\lesssim\|u_0\|_{H^{m+n}}^2 + \int_{0}^T \E\Big[\|u\|_{H^{m+n}}^2\Big]\,\d t + \E[\<M_T\>^{1/2}]\\
			&\lesssim \|u_0\|_{H^{m+n}}^2  +\epsilon \E\bigg[\sup_{t\in [0,T]}\|u\|_{H^{m+n}}^2\bigg] + C_\epsilon \int_{0}^T\E\bigg[\|u\|_{H^{m+n+2a}}^2\bigg]\,\d t.
		\end{align*}
		Choosing $\epsilon>0$ sufficiently small, we arrive at
		\begin{equation}\label{apriori-bdd}
			\E\bigg[\|u\|_{C([0,T];H^{m+n})}^2\bigg] \leq C\big(\|u_0\|_{H^{m+n}},R,\delta,m,n,\{\sigma_k\}\big).
		\end{equation}
		It is worth noting that when $m=0$, the noise terms $J_k$ and $M$ vanish, hence the underlying constant in \eqref{apriori-bdd} is independent of $\{\sigma_k\}$, and we obtain a pathwise bound.
		
		Next, we derive a rigorous It\^o's formula for $\|u\|_{H^{n}}^2=\|u\|_{H^{2b+a}}^2$.The case $a \leq 0$ is a standard consequence of the classical infinite-dimensional It\^o formula \cite[Theorem 4.2.5]{LR15}. In contrast, the case $0<a<1/2$ demands some extra work. Denote by $J_\epsilon$ the standard mollification operator on $\Tb^3$. By replacing the operator $\Lambda^m$ by $J_\epsilon$ in the preceding argument, we have $\Pb\text{-a.s.}$, for every $t\in [0,T]$,
		\begin{equation*}
			\begin{split}
				\|J_\epsilon u\|_{H^n}^2 &= \|J_\epsilon u_0\|_{H^n}^2 -2 \int_{0}^t\|J_\epsilon u\|_{H^{n+1}}^2\,\d s-2\sum_k\int_{0}^t\<S_k^\epsilon u, J_\epsilon u\>_{H^n}\d W_s^k\\ &\quad-2\int_{0}^tf_R\big(\|u\|_{H^{n-\delta}}\big)\big\<[J_\epsilon\Lambda^n,u\cdot\nabla] u, J_\epsilon\Lambda^n u\big\>\,\d s \\
				&\quad +\sum_k\int_{0}^t\<S_k^\epsilon u,S_k^\epsilon u\big\>_{H^n} + \< T_k^\epsilon u,J_\epsilon u\big\>_{H^n}\,\d s  ,
			\end{split}
		\end{equation*}
		where $S_k^\epsilon= [J_\epsilon, \Lc_k]$ and $T_k^\epsilon= [S_k^\epsilon, \Lc_k]$. We claim that, as $\epsilon\rightarrow 0$,
		\begin{align}
			&\big\<[J_\epsilon\Lambda^n,u\cdot\nabla] u, J_\epsilon\Lambda^n u\big\>\rightarrow \big\<[\Lambda^n,u\cdot\nabla] u, \Lambda^n u\big\>, \label{commutator1}\\
			&\|S_k^\epsilon\|_{H^n}\lesssim_k \epsilon^{1-2a} \|u\|_{H^{n+1}},\quad \|T_k^\epsilon\|_{H^{n-2a}}\lesssim_k  \epsilon^{1-2a} \|u\|_{H^{n+1}}.\label{commutator2}
		\end{align}
		With the above claim in hand, we let $\epsilon\rightarrow 0$ to obtain $\Pb\text{-a.s.},$ for every $t\in [0,T]$,
		\begin{equation*}
			\begin{split}
				\| u\|_{H^n}^2 = \|  u_0\|_{H^n}^2 -2 \int_{0}^t\| u\|_{H^{n+1}}^2\,\d s -2\int_{0}^tf_R\big(\|u\|_{H^{n-\delta}}\big)\big\<[\Lambda^n,u\cdot\nabla] u, \Lambda^n u\big\>\,\d s.
			\end{split}
		\end{equation*}
		The proof of the claims \eqref{commutator1}-\eqref{commutator2} are deferred to Appendix \ref{proof of commutator}.
		
		With these preparations in hand, the proof of Proposition \ref{GWP-NS-cut-off} becomes straightforward. Indeed, existence of solutions follows from a standard Galerkin approximation. For the pathwise uniqueness, we apply It\^o's  formula to $\|u^1-u^2\|_{H^{2b+a}}^2$, where $u^1$ and $u^2$ are two solutions starting from the same initial data $u_0\in H^{2b+a}$. Since all terms induced by noise vanish, the proof proceeds exactly as in the deterministic case, and is thus omitted here. Finally, the instantaneous smoothing effect is a direct consequence of a standard bootstrap argument.
	\end{proof}
	
	Analogous to the argument in \cite[Section 3.2]{CFH19}, Proposition \ref{GWP-NS-cut-off} implies the local well-posedness of the untruncated equation \eqref{NS-u}. Now we prove that small initial data implies global well-posedness.
	\begin{proposition}\label{small-GWP}
		Fix $a<\frac{1}{2}<2b+a$. Then there exists a constant $\delta_0=\delta_0(a,b)$, independent of the choices of $\{\sigma_k\}$, such that for every initial data 
		\begin{equation*}
			u_0\in H^{2b+a}, \quad \|u_0\|_{H^{2b+a}}\leq \delta_0,
		\end{equation*}
		the equation \eqref{NS-u} admits a unique global strong solution.
	\end{proposition}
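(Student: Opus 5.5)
The plan is to use the cut-off equation \eqref{NS-cut off} together with the pathwise energy identity from Proposition \ref{GWP-NS-cut-off}. In that identity all noise contributions vanish at the level $n=2b+a$, because $\Lc_k$ is antisymmetric in $H^{n}$. I will then show that, for small data, the cut-off is never active. Since the noise terms disappear, everything reduces to a deterministic small-data argument, and the resulting constant $\delta_0$ cannot depend on $\{\sigma_k\}$.

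\textbf{Step 1: energy inequality.} Fix $R>0$ and $\delta\in(0,2b+a-\frac12)$, and let $u$ be the global solution of \eqref{NS-cut off} given by Proposition \ref{GWP-NS-cut-off}. The rigorous It\^o formula proved there gives, $\Pb$-a.s.,
\begin{equation*}
\|u(t)\|_{H^n}^2+2\int_0^t\|u\|_{H^{n+1}}^2\,\d s=\|u_0\|_{H^n}^2-2\int_0^t f_R\big(\|u\|_{H^{n-\delta}}\big)\big\<[\Lambda^n,u\cdot\nabla]u,\Lambda^nu\big\>\,\d s .
\end{equation*}
I then bound the commutator term as in the estimate of $I$, using Kato--Ponce and $H^{1/2}\hookrightarrow L^3$. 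For $n>\frac12$, I plan to aim for a critical-type bound
\begin{equation*}
\big|\<[\Lambda^n,u\cdot\nabla]u,\Lambda^nu\>\big|\le C_{n}\,\|u\|_{H^{n}}\,\|u\|_{H^{n+1}}^2 .
\end{equation*}
Concretely, the plan is to estimate $\|\nabla u\|_{L^3}\lesssim\|u\|_{H^{3/2}}$ and $\|\Lambda^nu\|_{L^3}\lesssim\|u\|_{H^{n+1/2}}$. I will then interpolate so that the total regularity count is $n+2(n+1)$ split as $n+(n+1)+(n+1)$. This needs $\|u\|_{H^{3/2}}\|u\|_{H^{n+1/2}}^2\lesssim\|u\|_{H^n}\|u\|_{H^{n+1}}^2$. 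The orders sum correctly ($\frac32+2n+1=3n+2$ against $n+2n+2$ only when $n=\frac12$). So for $n>\frac12$ this fails as stated, and I would instead use the product estimate $\|fg\|_{H^{n}}$-type bounds in the form $\lesssim\|u\|_{H^{\min(n,3/2)+\dots}}$.

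\textbf{Fallback for Step 1 (the main obstacle).} A cleaner route, and the one I would try first, is $|\<\Lambda^n(u\cdot\nabla u),\Lambda^n u\>|\le\|u\cdot\nabla u\|_{H^{n-1}}\|u\|_{H^{n+1}}$, followed by the estimate $\|u\otimes u\|_{H^{n}}\lesssim\|u\|_{H^{s_1}}\|u\|_{H^{s_2}}$ with $s_1+s_2=n+\frac32$. I would choose $s_1=n$ and $s_2=\frac32$ when $n<\frac32$, interpolating $H^{3/2}$ between $H^{1/2}\subset H^n$ and $H^{n+1}$. For larger $n$, I would take $s_1=n+1$ and $s_2=\frac12+\eta$. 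In all cases the aim is a bound of the form $C\|u\|_{H^n}^{\theta}\|u\|_{H^{n+1}}^{2-\theta'}$ with homogeneity three, and hence $\le C\|u\|_{H^n}\|u\|_{H^{n+1}}^2$ after using $\|u\|_{H^{1/2}}\lesssim\|u\|_{H^n}\lesssim\|u\|_{H^{n+1}}$ (mean zero). Getting the exponents consistent for all $n>\frac12$ is the step I expect to be hardest.

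\textbf{Step 2: continuity argument.} With the bound from Step 1 in hand, I take $\delta_0$ with $C_n\delta_0\le\frac12$ and $\delta_0<R$. As long as $\|u(s)\|_{H^n}\le\delta_0$, the energy identity gives $\frac{\d}{\d t}\|u\|_{H^n}^2\le-(2-2C_n\|u\|_{H^n})\|u\|_{H^{n+1}}^2\le0$. The map $t\mapsto\|u(t)\|_{H^n}$ is continuous, so it stays $\le\|u_0\|_{H^n}\le\delta_0$ for all times, pathwise. Consequently $\|u\|_{H^{n-\delta}}\lesssim\|u\|_{H^n}<R$, so $f_R\equiv1$ and $u$ solves \eqref{NS-u} globally.

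\textbf{Step 3: uniqueness.} Uniqueness among strong solutions follows from local uniqueness for \eqref{NS-u}, obtained via the cut-off as in \cite[Section 3.2]{CFH19}. Since $C_n$ depends only on $n=2b+a$, $\delta_0$ depends only on $a,b$.
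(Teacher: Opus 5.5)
Your strategy is the paper's. You use the pathwise $H^{2b+a}$ energy identity, in which every noise term cancels by antisymmetry of $\Lc_k$, so $\delta_0$ cannot depend on $\{\sigma_k\}$. You bound the nonlinearity via Kato--Ponce and use smallness to keep $\|u\|_{H^{2b+a}}$ nonincreasing. Running this on the cut-off solution of Proposition~\ref{GWP-NS-cut-off} and then checking $f_R\equiv1$ is a correct, and slightly more careful, rendering of the paper's ``same energy estimates''.

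The weak point is Step 1, where you abandon the direct route for a wrong reason. The derivative counts ($2n+\frac52$ versus $3n+2$) do differ for $n>\frac12$. But the surplus sits on the side you bound \emph{by}, and for mean-zero fields on $\Tb^3$ a higher Sobolev norm always dominates a lower one. So start from the Kato--Ponce bound $|\langle[\Lambda^n,u\cdot\nabla]u,\Lambda^nu\rangle|\lesssim\|u\|_{H^{3/2}}\|u\|_{H^{n+1/2}}^2$. Combined with $\|u\|_{H^{n+1/2}}^2\le\|u\|_{H^n}\|u\|_{H^{n+1}}$ and $\|u\|_{H^{3/2}}\le\|u\|_{H^{n+1}}$ (as $n+1>\frac32$), it gives your target $C_n\|u\|_{H^n}\|u\|_{H^{n+1}}^2$ for every $n>\frac12$ in one line, with no case distinction.

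Your fallback can also be made to work, but only because both factors are the same $u$. As bilinear estimates for independent $f,g$, both $\|fg\|_{H^n}\lesssim\|f\|_{H^n}\|g\|_{H^{3/2}}$ (false since $H^{3/2}\not\hookrightarrow L^\infty$) and $\|fg\|_{H^n}\lesssim\|f\|_{H^{n+1}}\|g\|_{H^{1/2+\eta}}$ (false when $n>\frac12+\eta$) fail. For $u\otimes u$ you must instead argue, e.g., via $\|u\otimes u\|_{H^n}\lesssim\|u\|_{H^n}\|u\|_{L^\infty}\lesssim\|u\|_{H^n}\|u\|_{H^{n+1}}$. So the fallback is the more delicate path, not the safe one.

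With this bound, your Step 2 is correct and a little more direct than the paper's closing. The paper keeps the factor $\|u\|_{H^{3/2}}$ and interpolates it between $H^n$ and $H^{n+1}$ when $n<\frac32$. It then uses Young and Poincar\'e to compare $x=\|u\|_{H^n}^2$ with a superlinear ODE $\dot x\le -x+Cx^{q}$, $q>1$. You instead absorb the nonlinear term into the dissipation as soon as $C_n\|u\|_{H^n}\le\frac12$. That yields monotonicity at once, and exponential decay via Poincar\'e.
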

	
	\begin{proof}
		We only need to show the solution is global in time. Let $n=2b+a>1/2$. Proceeding with the same energy estimates as in the proof of Proposition \ref{GWP-NS-cut-off}, we obtain 
		\begin{equation*}
			\frac{\d}{\d t} \|u\|_{H^{n}}^2 +2\|u\|_{H^{n+1}}^2\lesssim_{n} \|u\|_{H^{3/2}}\|u\|_{H^{n}}\|u\|_{H^{n+1}},\quad \Pb\text{-a.s.}
		\end{equation*}
		If $n>3/2$, we simply use $\|u\|_{H^{3/2}}\leq \|u\|_{H^n}$. Otherwise, by interpolation between $H^n$ and $H^{n+1}$, we have
		\begin{equation*}
			\|u\|_{H^{3/2}}\leq \|u\|_{H^n}^{n+1/2}\|u\|_{H^{n+1}}^{5/2-n}.
		\end{equation*}
		In both cases, by H\"older's inequality, we get $\Pb\text{-a.s.}$,
		\begin{equation*}
			\frac{\d}{\d t} \|u\|_{H^{n}}^2 +\|u\|_{H^{n+1}}^2\lesssim_{n} \|u\|_{H^{n}}^{p(n)}, 
		\end{equation*}
		where $p(n)=\frac{2(2n+1)}{2n-1}$ for $n\in (1/2,3/2)$ and $p(n)=4$ for $n\geq 3/2$. Let $x(t)=\|u(t)\|_{H^n}^2$, we have 
		\begin{equation*}
			\dot{x} \leq -x + C(n) x^{p(n)}, \quad x(0)\leq \delta_0^2.
		\end{equation*}
		Obviously, provided that $\delta_0^2<C(n)^{-\frac{1}{p(n)-1}}$, the function $x(t)$ decreases in time; hence, the solution exists globally.
	\end{proof}
	\subsection{Scaling limit} 
	
	By Proposition \ref{GWP-NS-cut-off}, for each $R,T>0$ and $N\geq 1$, the following equation
	\begin{equation}\label{NS-u_n-cutoff}
		\left\{
		\begin{aligned}
			&\d u^N +  
			f_R\big(\|u\|_{H^{2b+a-\delta}}^N\big)\Pi(u^N\cdot\nabla u^N)\,\d t
			=\sum_k  \Lc_k^N u^N \circ \d W^k + \Delta u^N\,\d t,\\
			&u^N(0)=u_0\in H^{2b+a}\quad\text{and}\quad \|u_0\|_{H^{2b+a}}\leq R_0 
		\end{aligned}
		\right.
	\end{equation}
	admits a unique strong solution $u^N\in C([0,T]; H^{2b+a})\cap L^2(0,T; H^{2b+a+1}),\:\Pb$-a.s., and we have uniform estimate
	\begin{equation}\label{bound1}
		\sup_{N} \Big(\|u^N\|_{C([0,T];H^{2b+a})} + \|u^N\|_{L^2(0,T;H^{2b+a+1})}\Big) \leq C(R_0,R,
		\delta,T),\quad \Pb\text{-a.s.}
	\end{equation} 
	We remark on a slight abuse of notation here: $u^N$ is used to denote the solution to the truncated equation \eqref{NS-u_n-cutoff}, while it also refers to the solution of the original equation \eqref{NS-u_n}. As shown in the following proof, by carefully choosing the truncation parameter $R$, the solution of \eqref{NS-u_n-cutoff} coincides with that of \eqref{NS-u_n} with high probability. 
	\begin{theorem}\label{scaling limit-cut-off}
		Fix  $\nu,R_0,R,T>0$ and $ a<\frac{1}{2}<2b+a-\delta$, then $u^N$ converges in probability, in space $\mathcal{X}=L^2(0,T;H^{2b+a})\cap C([0,T];H^{2b+a-\delta})$ to the unique global strong solutions to the deterministic equation with hyperviscosity 
		\begin{equation}\label{hypervis-u-cutoff}
			\left\{
			\begin{aligned}
				& \partial_t u + f_R(\|u\|_{H^{2b+a-\delta}})\Pi (u\cdot\nabla u) =-\frac{3\nu}{5}(-\Delta)^{1+a}u+\Delta u,\\
				& u(0)=u_0.
			\end{aligned}
			\right.
		\end{equation}
	\end{theorem}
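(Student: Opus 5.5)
We follow the compactness method of \cite{FlaLuo21,FGL21}: prove tightness of $\{u^N\}$ in $\mathcal{X}$, identify every limit point as a solution of \eqref{hypervis-u-cutoff}, and upgrade convergence in law to convergence in probability using uniqueness of the deterministic limit.

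\textbf{Step 1 (tightness).} By \eqref{bound1}, $\{u^N\}$ is bounded pathwise in $C([0,T];H^{2b+a})\cap L^2(0,T;H^{2b+a+1})$. For time regularity we work in a negative space $H^{-\beta}$ with $\beta$ large and write the It\^o form of \eqref{NS-u_n-cutoff}. The drift consists of:
\begin{itemize}
\item the cut-off nonlinearity, which is bounded by \eqref{bound1};
\item $\Delta u^N$;
\item the It\^o--Stratonovich correction $\frac12\sum_k(\Lc_k^N)^2u^N$.
\end{itemize}
For the correction, the Fourier computation of Lemma \ref{Ito corrections limit} gives a multiplier bounded by $C\nu|\xi|^{2+2a+2|a|}$ uniformly in $N$, so it is bounded in $L^\infty_tH^{2b+a-2-4|a|}$. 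For the martingale term, BDG and the Fourier computation of Lemma \ref{det-martingale-limit} bound $\E\|M^N_t-M^N_s\|_{H^{-\beta}}^p$ by $C|t-s|^{p/2}$, using the $L^\infty_t H^{2b+a}$ bound. Hence $\{u^N\}$ is bounded in $W^{\alpha,p}(0,T;H^{-\beta})$ for some $\alpha p>1$. The Aubin--Lions--Simon lemma, with the compact embeddings $H^{2b+a+1}\Subset H^{2b+a}$ and $H^{2b+a}\Subset H^{2b+a-\delta}$, then gives tightness in $\mathcal{X}$.

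\textbf{Step 2 (identification).} By Prokhorov and Skorokhod we pass to a subsequence $\tilde u^N\to\tilde u$ a.s.\ in $\mathcal{X}$ on a new probability space. Test against a smooth divergence-free $\varphi$.
\begin{itemize}
\item The martingale part $\sum_k\int\<u^N,(\Lc^N_k)^*\varphi\>\,\d W^k$ tends to zero in probability, by the argument of Lemma \ref{det-martingale-limit}. That argument uses only $\sup_{\xi}|\widehat{Q^N}(\xi)||\xi|^{2a}\to0$ together with the uniform $H^{2b+a}$ bound; the Leray projection and the weights $(-\Delta)^{\pm b}$ only change polynomial factors on the test side.
\item For the correction, $\<(\Lc_k^N)^2u^N,\varphi\>=\<u^N,((\Lc_k^N)^2)^*\varphi\>$. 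The vector version of Lemma \ref{Ito corrections limit} gives $\sum_k((\Lc^N_k)^2)^*\varphi\to -\frac{6\nu}{5}(-\Delta)^{1+a}\varphi$ in every $H^r$. The constant $\frac35$ comes from averaging $|P^\perp_\eta\xi|^2$ and the Leray projection $P^\perp_{\xi-\eta}$ over directions $\eta$: in $d=3$, with $C_d$ chosen to normalise the scalar trace, the angular average produces $\frac{3}{5}$ instead of $1$. I expect this to be a direct but careful Fourier computation.
\item The nonlinear term converges because $f_R$ is continuous and $u^N\to u$ in $C([0,T];H^{2b+a-\delta})\cap L^2H^{2b+a}$ with $2b+a-\delta>\frac12$, which is enough to pass to the limit in $\<u\otimes u,\nabla\varphi\>$.
\end{itemize}
So $\tilde u$ is a weak solution of \eqref{hypervis-u-cutoff} in $\mathcal{X}\cap L^\infty H^{2b+a}\cap L^2H^{2b+a+1}$.

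\textbf{Step 3 (uniqueness and conclusion).} Uniqueness for \eqref{hypervis-u-cutoff} in this class follows from an $H^{2b+a-\delta}$, or $L^2$-type, energy estimate on the difference of two solutions. The added term $-\frac{3\nu}{5}(-\Delta)^{1+a}$ is dissipative, and the difference of the cut-off nonlinearities is handled by the Lipschitz property of $f_R$ and the subcritical bounds, closed with Gr\"onwall. Existence of the limit is supplied by Step 2. Since the limit is deterministic and unique, the whole sequence converges in law to a Dirac mass, which is equivalent to convergence in probability. The main obstacle is Step 2's correction term: the Leray projections make $\sum_k(\Lc_k^N)^2$ a genuinely matrix-valued multiplier, so one must redo the Fourier computation of Lemma \ref{Ito corrections limit} with both $P^\perp_\eta$ and $P^\perp_{\xi-\eta}$ and extract the factor $\frac35$ through the angular averaging lemma.
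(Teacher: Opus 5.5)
Your proposal follows the paper's proof essentially step by step: tightness in $\mathcal{X}$ from the pathwise bound \eqref{bound1} plus a uniform $W^{\beta,p}$ time-regularity estimate in a negative Sobolev space (the paper packages your correction and martingale bounds as Lemma \ref{bound2}); identification of limit points via Lemma \ref{det-martingale-limit} and the vector-valued correction limit Lemma \ref{S^N limit}, where the paper takes the factor $\frac{3}{5}$ from \cite[Appendix B]{LuoTan23}; and uniqueness of the deterministic limit to upgrade convergence in law to convergence in probability. One small caveat: for uniqueness use the $H^{2b+a-\delta}$ estimate rather than an $L^2$ one, because the Lipschitz bound on $f_R$ produces $\|u^1-u^2\|_{H^{2b+a-\delta}}$, and the $L^2$ energy with its dissipation does not control this when $2b+a-\delta$ is large.
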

	\begin{remark}
		It is well known that when $a\geq1/4$, the limit equation is global well-posedness even without the cut-off on nonlinearity. See Proposition \ref{critical-hypervis} for more details.
	\end{remark}
	To apply the compactness method to prove the convergence stated in Theorem \ref{scaling limit-cut-off}, we need the following estimates.
	\begin{lemma}\label{bound2}
		Let $a,b\in \R$ and $M=2b+a-2(1+a+|a|)$. For every progressively measurable process $\varphi\in L^\infty(0,T;H^{2b+a})$, define 
		\begin{align*}
			S^N (\varphi)&=\frac{1}{2}\sum_k (\Lc_k^N)^2 \varphi=(-\Delta)^{-b}\Pi\big( \sigma_k^N\cdot\nabla (-\Delta)^{a}\Pi\big( \sigma_k^N\cdot\nabla (-\Delta)^{a+b} \varphi\big)\big),\\
			M_t^N &=\sum_k\int_{0}^{t} \Lc_k^N \varphi\,\d W^k=\sum_k\int_{0}^{t}(-\Delta)^{-b}\Pi\big( \sigma_k^N\cdot\nabla (-\Delta)^{a+b}\varphi\big)\,\d W^k.
		\end{align*}
		Then for every $t,s\in [0,T]$ and $1\leq p<\infty$, we have 
		\begin{align*}
			\|S^N (\varphi)\|_{H^M}&\lesssim_{a,\nu} \|\varphi\|_{H^{2b+a}},\quad \Pb\text{-a.s.}\\ \E\Big[\|M_t^N-M_s^N\|_{H^M}^p\Big]&\lesssim_{a,\nu,p} |t-s|^{p/2} \|\varphi\|_{L^\infty (0,T;H^{2b+a})}^p.
		\end{align*}
	\end{lemma}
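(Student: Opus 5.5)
We argue by duality in Fourier variables, reducing both bounds to uniform estimates on the covariance $Q^N$ through \eqref{Q^N-property} and the normalization $\sum_k(\theta_k^N)^2|k|^{2a}=|\theta^N|_{h^a}^2$. Since $(-\Delta)^{\pm b}$ and $\Pi$ are Fourier multipliers commuting with each other, with $\Pi$ bounded on every $H^s$, we reduce to $b=0$. This shifts all Sobolev indices by $2b$: the claimed space $H^M$ becomes $H^{a-2(1+a+|a|)}$ and $\varphi$ lies in $H^a$. The projections $\Pi$ inside the operators only insert bounded matrices $P_\xi^\perp$ in Fourier space, so they do not affect the sizes of the sums.

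\emph{Itô correction.} Pair $S^N(\varphi)$ with a test function $\psi\in H^{-M}$ and write
\begin{equation*}
\<S^N(\varphi),\psi\>=-\tfrac12\sum_k\big\<(-\Delta)^{a}\Pi(\sigma_k^N\cdot\nabla(-\Delta)^{a}\varphi),\,\sigma_k^N\cdot\nabla\psi\big\>.
\end{equation*}
Expanding in Fourier modes and using the identity $\sum_k\widehat{\sigma_k^N}(\xi-\eta)\otimes\widehat{\sigma_k^N}(\eta-\zeta)=(2\pi)^{d/2}\widehat{Q^N}(\xi-\eta)\mathbb{I}_{\xi=\zeta}$, exactly as in Lemma \ref{Ito corrections limit}, the operator becomes diagonal. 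Its symbol at frequency $\xi$ is bounded by
\begin{equation*}
|\xi|^{2+2a}\,\frac{\nu}{|\theta^N|_{h^a}^2}\sum_\eta(\theta_\eta^N)^2|\xi-\eta|^{2a}.
\end{equation*}
By \eqref{intermidiate}, $|\xi-\eta|^{2a}\lesssim_a|\xi|^{2|a|}|\eta|^{2a}$. Summing over $\eta$ then gives a multiplier bound $\lesssim\nu|\xi|^{2+2a+2|a|}$, uniformly in $N$. Hence $S^N$ maps $H^{a}$ into $H^{a-2(1+a+|a|)}=H^M$ (after undoing $b$), with a constant depending only on $a$ and $\nu$. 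Because $\varphi$ enters linearly and the bound is deterministic, the estimate holds $\Pb$-a.s. pointwise in time.

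\emph{Martingale.} By BDG, $\E\|M_t^N-M_s^N\|_{H^M}^p\lesssim_p \E\big(\int_s^t\sum_k\|\Lc_k^N\varphi\|_{H^M}^2\d r\big)^{p/2}$. It therefore suffices to show
\begin{equation*}
\sum_k\|\Pi(\sigma_k^N\cdot\nabla(-\Delta)^a\varphi)\|_{H^M}^2\lesssim\nu\|\varphi\|_{H^a}^2
\end{equation*}
uniformly in $N$. Writing the left side in Fourier and applying the same covariance identity, it equals
\begin{equation*}
\sum_{\xi,\zeta}|\xi|^{2M}\,\zeta^T\widehat{Q^N}(\xi-\zeta)\zeta\,|\zeta|^{4a}|\hat\varphi(\zeta)|^2.
\end{equation*}
We insert $|\widehat{Q^N}(\xi-\zeta)|\lesssim\frac{\nu}{|\theta^N|_{h^a}^2}(\theta^N_{\xi-\zeta})^2$ and use \eqref{intermidiate} to trade the weight $|\zeta|^{2a}$ for $|\xi-\zeta|^{2a}$, at the cost of a factor of at most $|\xi|^{2|a|}$. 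The sum over $\xi$ of $(\theta^N_{\xi-\zeta})^2|\xi-\zeta|^{2a}$ then collapses to $|\theta^N|_{h^a}^2$. The powers $|\zeta|^{2+2a}|\xi|^{2M+2|a|}$ are absorbed because the negative index $M$ is chosen low enough. This is plausible since $2M+2+2a+2|a|<2a$, but the exponent bookkeeping must be checked.

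\emph{Main obstacle.} The difficulty is this exponent bookkeeping. When $\xi$ and $\zeta$ are far apart, the weight $|\xi|^{2M}$ is not comparable to $|\zeta|^{2M}$. We will handle this with Peetre's inequality, $|\xi|^{s}\lesssim|\zeta|^{s}\langle\xi-\zeta\rangle^{|s|}$, combined with the extra decay $(\theta^N)^2$ in $\xi-\zeta$. If that decay proves insufficient for uniformity in $N$, we fall back on the cruder but sufficient choice of putting $\sup_\eta|\widehat{Q^N}(\eta)||\eta|^{2a}\le\nu$ on one factor. We then sum the rest via $\|\cdot\|_{H^M}\le\|\cdot\|_{L^1}$-type bounds on Fourier coefficients, exploiting the strongly negative $M$, in analogy with the proof of Lemma \ref{det-martingale-limit}.
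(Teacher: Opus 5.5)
\textbf{The martingale bound has a genuine gap.} Your treatment of the It\^o correction is the same as the paper's, and the BDG reduction is fine. The problem is the key estimate
$\sum_k\|\Pi(\sigma_k^N\cdot\nabla(-\Delta)^a\varphi)\|_{H^M}^2\lesssim\nu\|\varphi\|_{H^a}^2$.

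You correctly write it as $\sum_{\xi,\zeta}|\xi|^{2M}\,\zeta^T\widehat{Q^N}(\xi-\zeta)\zeta\,|\zeta|^{4a}|\hat\varphi(\zeta)|^2$. You then bound $|\zeta^T\widehat{Q^N}(\xi-\zeta)\zeta|\le|\widehat{Q^N}(\xi-\zeta)|\,|\zeta|^2$, which leaves the derivative on the input frequency $\zeta$. In the regime $|\xi|\ll|\zeta|$, the weight $|\xi|^{2M}$ gives no decay in $\zeta$, so this factor $|\zeta|^2$ is never absorbed. The inequality $2M+2+2a+2|a|<2a$ you cite compares exponents as if $\xi$ and $\zeta$ were the same frequency.

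Neither remedy you propose repairs this.
\begin{itemize}
\item Peetre's inequality moves $2|M|$ powers of the weight onto $\xi-\zeta$. Since $(\theta^N_\eta)^2|\eta|^{2a}=|\eta|^{-2\gamma}$ on $|\eta|\le N$, the normalized sum $|\theta^N|_{h^a}^{-2}\sum_\eta(\theta^N_\eta)^2|\eta|^{2a}\langle\eta\rangle^{r}$ grows like $N^{r}$ (up to a logarithm) for every $r>0$. This is exactly the lack of uniform smoothness \eqref{no uniform smooth}, so uniformity in $N$ is lost.
\item The fallback with $\sup_\eta|\widehat{Q^N}(\eta)||\eta|^{2a}\le\nu$ does not remove $|\zeta|^2$ either. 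Summing $|\xi|^{2M}$ over $\xi$ only gives a bound by $\nu\|\varphi\|_{H^{1+2a}}^2$ (or $\nu\|\varphi\|_{H^{1+a}}^2$ if $a<0$), at least one derivative above $H^a$.
\end{itemize}
In Lemma \ref{det-martingale-limit} the analogous step works only because the gradient was first moved onto the smooth test function by integration by parts. That is precisely the ingredient missing here.

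\textbf{The missing idea is the divergence-free cancellation.} Since $\widehat{Q^N}(k)$ is a multiple of $P_k^\perp$, it annihilates $k$. Hence $\widehat{Q^N}(\xi-\zeta)\zeta=\widehat{Q^N}(\xi-\zeta)\xi$, and so $\zeta^T\widehat{Q^N}(\xi-\zeta)\zeta=\xi^T\widehat{Q^N}(\xi-\zeta)\xi$; equivalently, $\sigma_k^N\cdot\nabla\psi=\divg(\sigma_k^N\psi)$. This moves the derivative to the output frequency. It is how the paper obtains the weight $|\xi-\eta|^{2M+2}$ in its notation. The paper then runs a three-region case analysis to get $\sup_\eta|\xi-\eta|^{2M+2}|\eta|^{-2a}\lesssim_a|\xi|^{-2a}$, and closes with $\sum_\eta|\widehat{Q^N}(\eta)||\eta|^{2a}\lesssim\nu$.

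With this one substitution, your own chain actually closes:
\begin{itemize}
\item Trade $|\zeta|^{2a}\lesssim_a|\xi|^{2|a|}\,|\xi-\zeta|^{2a}$ via \eqref{intermidiate}.
\item What remains is $\sup_\xi|\xi|^{2M+2+2|a|}$. After your reduction to $b=0$ (where $M=-a-2-2|a|$), this exponent is $-2a-2-2|a|<0$, so the supremum is at most $1$.
\item The sum over $\xi$ then collapses to $\sum_\eta|\widehat{Q^N}(\eta)||\eta|^{2a}\lesssim\nu$, uniformly in $N$.
\end{itemize}
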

	\begin{proof}
		Without loss of generality, we can assume $b=0$.
		Arguing as in Lemma \ref{Ito corrections limit}, we get, for every $\xi\in \Z_0^3$,
		\begin{align*}
			2\widehat{S_N(\varphi)}(\xi)&=-(2\pi)^{-3/2}P_{\xi}^\perp\sum_{\eta} \xi^T\widehat{Q^N}(\eta)\xi\, |\xi-\eta|^{2a}P_{\xi-\eta}^\perp\: |\xi|^{2a}\hat{\varphi}(\xi)\\
			&=-(2\pi)^{-3/2}P_{\xi}^\perp\sum_{\eta} \xi^T\widehat{Q^N}(\eta)|\eta|^{2a}\xi\, \frac{|\xi-\eta|^{2a}}{|\eta|^{2a}}P_{\xi-\eta}^\perp\: |\xi|^{2a}\hat{\varphi}(\xi).
		\end{align*}
		Recall that  we have $\sup\limits_{\eta}\Big|\frac{|\xi-\eta|^{2a}}{|\eta|^{2a}}\Big|\lesssim_a |\xi|^{2|a|}$ and $\sum_{\eta}|\widehat{Q^N}(\eta)|\,|\eta|^{2a}\lesssim \nu$ and by \eqref{intermidiate}, we arrive at
		\begin{equation*}
			\big|\widehat{S^N(\varphi)}(\xi)\big|\lesssim_{a,\nu} |\xi|^{2(1+a+|a|)} |\hat{\varphi}(\xi)|,
		\end{equation*}
		which implies the first assertion. For the second one, by Burkholder-Davis-Gundy inequality, we have
		\begin{equation}\label{BDG}
			\E\Big[\|M_t^N-M_s^N\|_{H^M}^p\Big]\lesssim_p \E\Big[\<M^N\>_{s,t}^{p/2}\Big],
		\end{equation}
		where the quadratic variation $\<M^N\>_{s,t}=\sum_k\int_{s}^{t}\|\Lc_k^N\varphi(r)\|_{H^M}^2$\,\d r. Note that 
		\begin{align*}
			\sum_k\|\Lc_k^N\varphi\|_{H^M}^2 &= \big\<\Pi\big( \sigma_k^N\cdot\nabla (-\Delta)^{a}\varphi\big), (-\Delta)^M\big( \sigma_k^N\cdot\nabla (-\Delta)^{a}\varphi\big)\big\> \\
			&\leq (2\pi)^{-\frac{3}{2}}\sum_{\xi,\eta} |\xi|^{4a}|\hat{\varphi}(\xi)|^2|\widehat{Q^N}(\eta)|\,|\eta|^{2a} \frac{|\xi-\eta|^{2M+2}}{|\eta|^{2a}} 
		\end{align*}
		A case by case analysis for $|\eta|\leq |\xi|/2,\,|\xi|/2\leq|\eta|\leq 2|\xi|$ and $|\eta|\geq 2|\xi|$ yields the estimate 
		\begin{equation*}
			\sup_{\eta}\frac{|\xi-\eta|^{2M+2}}{|\eta|^{2a}}\lesssim_{a} |\xi|^{-2a}.
		\end{equation*}
		Hence we obtain
		\begin{equation*}
			\sum_k\|\Lc_k^N\varphi\|_{H^M}^2\lesssim_{a}\sum_{\xi} |\xi|^{2a}|\hat{\varphi}(\xi)|^2\sum_{\eta}|\widehat{Q^N}(\eta)|\,|\eta|^{2a} \lesssim_{a,\nu}  \|\varphi\|_{H^a}^2.
		\end{equation*}
		Substituting into \eqref{BDG}, we get 
		\begin{equation*}
			\E\Big[\|M_t^N-M_s^N\|_{H^M}^p\Big]\lesssim_{a,\nu,p} |t-s|^{p/2}\|\varphi\|_{L^\infty (0,T;H^{a})}^p,
		\end{equation*}
		which completes the proof.
	\end{proof}
	\begin{lemma}\label{S^N limit}
		For every divergence-free vector field $\varphi\in C^\infty(\Tb^3;\R^3)$,  the following limit holds in $H^r(\Tb^3;\R^3)$ for every $r\in\R$:
		\begin{equation*}
			\lim\limits_{N\rightarrow\infty}S^N(\varphi)=-\frac{3\nu}{5}(-\Delta)^{1+a}\varphi.
		\end{equation*}
	\end{lemma}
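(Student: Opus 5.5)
The plan is to mimic the proof of Lemma \ref{Ito corrections limit} at the level of Fourier coefficients. The new ingredients are the two Leray projections, which produce the constant $\frac35$ through an average over the sphere. As in Lemma \ref{bound2}, we may assume $b=0$. For divergence-free $\varphi$ we have $\hat\varphi(\xi)=P_\xi^\perp\hat\varphi(\xi)$. The Fourier computation already carried out in the proof of Lemma \ref{bound2}, which uses \cite[Proposition 1.3]{LuoXieZhao24} to collapse the double sum to the diagonal, gives
\begin{equation*}
2\widehat{S^N(\varphi)}(\xi)=-(2\pi)^{-3}\frac{C_3\nu}{|\theta^N|_{h^a}^2}\sum_{\eta\neq\xi}(\theta^N_\eta)^2\,\big(\xi^TP_\eta^\perp\xi\big)\,|\xi-\eta|^{2a}\,P_\xi^\perp P_{\xi-\eta}^\perp P_\xi^\perp\,|\xi|^{2a}\hat\varphi(\xi).
\end{equation*}
Here I have used the explicit form \eqref{Q^N} of $\widehat{Q^N}$. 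I also used that $\eta$ may be replaced by $\xi$ in the contraction with $Q^N$, exactly as in Lemma \ref{Ito corrections limit}, since $\widehat{Q^N}(\xi-\eta)(\xi-\eta)=0$.

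\textbf{Step 1 (freezing $\xi$ at high frequencies).} For fixed $\xi$, I replace $|\xi-\eta|^{2a}$ by $|\eta|^{2a}$ and $P^\perp_{\xi-\eta}$ by $P^\perp_\eta$. Both differences tend to $0$ as $|\eta|\to\infty$, since $|P^\perp_{\xi-\eta}-P^\perp_\eta|\lesssim|\xi|/|\eta|$. Both are bounded by $C|\xi|^{2|a|}$, by \eqref{intermidiate}. I then split the sum into $|\eta|\le M$ and $|\eta|>M$, as in the treatment of $R^N$. The low part is killed by \eqref{Q^N-property}, because the weights $(\theta^N_\eta)^2|\eta|^{2a}/|\theta^N|^2_{h^a}$ sum to $1$ and have vanishing supremum. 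The high part is small uniformly in $N$ once $M$ is large.

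\textbf{Step 2 (spherical average, the main point).} With $w^N_\eta:=(\theta^N_\eta)^2|\eta|^{2a}/|\theta^N|^2_{h^a}=|\eta|^{-2\gamma}\mathbb I_{|\eta|\le N}/\sum_{|\eta'|\le N}|\eta'|^{-2\gamma}$, the main term becomes $\sum_\eta w^N_\eta F(\eta/|\eta|)$, where
\begin{equation*}
F(n)=|\xi|^2\big(1-(\hat\xi\cdot n)^2\big)\Big(P_\xi^\perp-(P_\xi^\perp n)\otimes(P_\xi^\perp n)\Big),\qquad \hat\xi=\xi/|\xi|.
\end{equation*}
This uses $P_\xi^\perp P^\perp_n P_\xi^\perp = P_\xi^\perp-(P_\xi^\perp n)\otimes(P_\xi^\perp n)$. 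I need the equidistribution statement $\sum_\eta w^N_\eta F(\eta/|\eta|)\to\int_{S^2}F\,\d\sigma$ (normalized measure) for continuous $F$. For $\gamma<3/2$ this follows from a Riemann-sum comparison of lattice sums on $\{|\eta|\le N\}$ with $\int_{|x|\le N}|x|^{-2\gamma}F(x/|x|)\,\d x$: the mass sits at scale $N$, and lattice-point errors are of lower order. For $\gamma=3/2$ I argue the same way dyadic shell by shell, since each shell carries mass $O(1)$ out of $\sim\log N$. This is the step I expect to need the most care; it is standard, though.

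\textbf{Step 3 (evaluating the constant).} Let $c=\hat\xi\cdot n$. Then $\int(1-c^2)=\frac23$. The matrix $\int(1-c^2)(P_\xi^\perp n)\otimes(P_\xi^\perp n)$ is rotation invariant in $\xi^\perp$, so it equals $\alpha P_\xi^\perp$. Taking traces, $2\alpha=\int(1-c^2)^2=1-\frac23+\frac15=\frac8{15}$, so $\alpha=\frac4{15}$. Hence $\int F=\frac25|\xi|^2P_\xi^\perp$. Since $(2\pi)^{-3}C_3=3$, I get $2\widehat{S^N(\varphi)}(\xi)\to-3\nu\cdot\frac25|\xi|^{2+2a}\hat\varphi(\xi)$, that is $S^N(\varphi)\to-\frac{3\nu}{5}(-\Delta)^{1+a}\varphi$ mode by mode.

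\textbf{Step 4 (upgrade to $H^r$).} Convergence in $H^r$ then follows exactly as at the end of Lemma \ref{Ito corrections limit}. The bound $|\widehat{S^N(\varphi)}(\xi)|\lesssim_{a,\nu}|\xi|^{2(1+a+|a|)}|\hat\varphi(\xi)|$, uniform in $N$ and taken from the proof of Lemma \ref{bound2}, together with the rapid decay of $\hat\varphi$, makes the tail $|\xi|\ge M$ uniformly small. Mode-wise convergence handles the finitely many remaining modes.
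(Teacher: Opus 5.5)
Your argument is correct and follows the same route as the paper's proof. You write $S^N$ as the Fourier multiplier \eqref{Fourier-S^N}, remove the factor $|\xi-\eta|^{2a}/|\eta|^{2a}$ by the $R^N$-type splitting of Lemma \ref{Ito corrections limit} (via \eqref{Q^N-property} and \eqref{intermidiate}), and then identify the limit of what remains. The only difference is that the paper keeps $P^\perp_{\xi-\eta}$ in its frozen operator $T^N$ and cites \cite[Appendix B]{LuoTan23} for $2\widehat{T^N(\varphi)}(\xi)\to-\frac{6\nu}{5}|\xi|^{2+2a}\hat\varphi(\xi)$. You instead also freeze $P^\perp_{\xi-\eta}\to P^\perp_\eta$ and derive that constant yourself from lattice equidistribution and the sphere moments. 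You also make the $H^r$ upgrade explicit with the uniform symbol bound from Lemma \ref{bound2}, so your version is self-contained where the paper relies on citation.
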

	\begin{proof}
		It suffices to prove the convergence of the corresponding Fourier transform. Note that for every $\xi\in\Z_0^3$,
		\begin{equation}\label{Fourier-S^N}
			2\widehat{S^N(\varphi)}(\xi)
			=-(2\pi)^{-3/2}P_{\xi}^\perp\sum_{\eta} \xi^T\widehat{Q^N}(\eta)|\eta|^{2a}\xi\, \frac{|\xi-\eta|^{2a}}{|\eta|^{2a}}P_{\xi-\eta}^\perp\: |\xi|^{2a}\hat{\varphi}(\xi).
		\end{equation}
		Define operator $T^N$ as 
		\begin{equation*}
			2\widehat{T^N(\varphi)}(\xi) =-(2\pi)^{-3/2}P_{\xi}^\perp\sum_{\eta}\xi^T\widehat{Q^N}(\eta)|\eta|^{2a}\xi\, P_{\xi-\eta}^\perp\: |\xi|^{2a}\hat{\varphi}(\xi)
		\end{equation*}
		By \cite[Appendix B]{LuoTan23}, 	$2\widehat{T^N(\varphi)}(\xi)\rightarrow -\frac{6\nu}{5}\widehat{(-\Delta)^{1+a}\varphi}(\xi)$, as $N\rightarrow \infty$. Additionally, we note that
		\begin{equation*}
			\big|\widehat{S^N(\varphi)}-\widehat{T^N(\varphi)}\big|(\xi)\lesssim |\xi|^{2a+2}|\hat{\varphi}(\xi)|\sum_\eta |\widehat{Q^N}(\eta)|\,|\eta|^{2a} \bigg|\frac{|\xi-\eta|^{2a}}{|\eta|^{2a}}-1 \bigg|,
		\end{equation*}
		which converges to $0$ by the proof of Lemma \ref{Ito corrections limit}, and the proof is complete.
	\end{proof}
	
	Based on Lemma \ref{bound2}-\ref{S^N limit}, we are now in a position to prove Theorem \ref{scaling limit-cut-off}.
	\begin{proof}[Proof of Theorem \ref{scaling limit-cut-off}]
		We only give a sketch of proof since it is similar to that of \cite[Theorem 1.4]{FlaLuo21}. First,write \eqref{NS-cut off} in integral form:
		\begin{align*}
			u^N(t)-u^N(s) &= -f_R(\|u^N\|_{H^{2b+a-\delta}})\int_{s}^{t} \Pi(u^N\cdot\nabla u^N)\,\d r\\
			&\quad + \sum_k\int_{s}^{t} \Lc_k^N u^N\d W_r^k + \int_{s}^{t} S^N(u^N)\,\d r.
		\end{align*}
		Let $M=2b+a-2(1+a+|a|)$, then by Lemma \ref{bound2} and \eqref{bound1}, for every $p<\infty$, we have
		\begin{align*}
			\E\Big[\|u^N(t)-u^N(s)\|_{H^M}^p\Big] &\lesssim_{a,\nu,p} C(R_0,R,\delta,T)^p (|t-s|^{p/2}+|t-s|^p) \\
			&\quad + \E\bigg[\Big\|\int_{s}^{t} \Pi(u^N\cdot\nabla u^N)\,\d r\Big\|_{H^M}^p\bigg].
		\end{align*}
		For the nonlinear term, since $2b+a>1/2$, it is easy to check that 
		\begin{equation*}
			\|u^N\cdot\nabla u^N\|_{H^M}= \|u^N\otimes u^N\|_{H^{M+1}}\lesssim_{a,b} \|u^N\|_{H^{2b+a}}^2.
		\end{equation*}
		Then we have
		\begin{align*}
			\E\bigg[\Big\|\int_{s}^{t} \Pi(u^N\cdot\nabla u^N)\,\d r\Big\|_{H^M}^p\bigg]
			&\leq 	\E\bigg[\Big(\int_{s}^{t} \big\|\Pi(u^N\cdot\nabla u^N)\big\|_{H^M}\,\d r\Big)^p\bigg] \\
			&\lesssim_{a,b} C(R_0,R,\delta,T)^{2p} |t-s|^p.
		\end{align*}
		Thus we obtain the estimate 
		\begin{equation*}
			\E\Big[\|u^N(t)-u^N(s)\|_{H^M}^p\Big]\lesssim |t-s|^{p/2},
		\end{equation*}
		where the underlying constant is independent of $N$. The above uniform bound implies that for all $0<\beta<1/2$ and $1\leq p<\infty$, we have
		\begin{equation}\label{time-C^0.5}
			\sup_{N}\E\Big[\|u^N\|_{W^{\beta,p}(0,T;H^M)}^p\Big]\leq C(R_0,R,\delta,T,p,a,b,\beta)<\infty.
		\end{equation}
		Estimates \eqref{bound1} and \eqref{time-C^0.5} directly imply that the law of $u^N$ is tight on  space $\mathcal{X}=L^2(0,T;H^{2b+a})\cap C([0,T];H^{2b+a-\delta})$. Following the same compactness argument as in \cite[Section 4]{FlaLuo21}, it remains to prove that the limit $u$ solves equation \eqref{hypervis-u-cutoff}. For every divergence-free vector field $\varphi\in C^{\infty}$, we have $\Pb\text{-a.s.}$,
		\begin{equation*}
			\begin{split}
				\<u^N(t),\varphi\> &=\<u_0,\varphi\>-\int_{0}^tf_R(\|u^N\|_{H^{2b+a-\delta}})\<u^N\cdot\nabla u^N, \varphi\>\,\d s + \<u^N,\Delta \varphi\>\,\d t\\
				&\quad- \sum_k\int_{0}^t\big\<\sigma_k^N\cdot\nabla (-\Delta)^{a+b} u^N,  (-\Delta)^{b}\varphi\big\>\, \d W_s^k\\
				&\quad+\int_{0}^{t}\big\<u, (-\Delta)^{2b+a}S^N (-\Delta)^{-2b-a}\varphi \big\>\,\d t,\quad \text{ for all } t\in [0,T].
			\end{split}
		\end{equation*}
		We can pass the limit for the nonlinear term by convergence in $C([0,T];H^{2b+a-\delta})$, the martingale vanishes in the limit by Lemma \ref{det-martingale-limit} and the convergence for the It\^o-Stratonovich term follows from Lemma \ref{S^N limit}.
	\end{proof}
	
	\subsection{Proof of Theorem \ref{Delay blow up-NS}}
	\begin{proof}
		\textbf{Part 1.}
		Assume $\frac{1}{4}\leq a< \frac{1}{2}< 2b+a-\delta$  . Fix $R_0,\nu,T>0$ and $\|u_0\|_{H^{2b+a}}\leq R_0$. By the uniform bound \eqref{s} in the proof of Proposition \ref{critical-hypervis}, we can choose $R>R_0e^{\frac{C_{a,b}}{\nu^2}R_0^2}+1$, then the limit $u$ satisfies 
		\begin{equation*}
			\partial_t u + \Pi (u\cdot\nabla u) =-\frac{3\nu}{5}(-\Delta)^{1+a}u+\Delta u.
		\end{equation*}
		Moreover, we have estimate
		\begin{equation*}
			\|u\|_{C([0,T];H^{2b+a})}\leq R_0e^{\frac{C_{a,b}}{\nu^2}R_0^2}=R-1.
		\end{equation*}
		Given $\epsilon\in (0,1)$, Theorem \ref{scaling limit-cut-off} implies that there exists $N_0=N_0(\nu,R_0,R,T,\epsilon)\in\N$, such that the unique strong solution $u^N$ to \eqref{NS-u_n-cutoff} satisfies
		\begin{equation*}
			\Pb\Big(\|u^N-u\|_{C([0,T];H^{2b+a-\delta})}>\epsilon\Big) + 	\Pb\Big(\|u^N-u\|_{L^2(0,T;H^{2b+a})}>\epsilon\Big)<\epsilon.
		\end{equation*}
		Define $\Omega_\epsilon =\Big\{\|u^N-u\|_{C([0,T];H^{2b+a-\delta})}\vee\|u^N-u\|_{L^2(0,T;H^{2b+a})}\leq \epsilon\Big\}$, then $\Pb(\Omega_\epsilon)\geq 1-\epsilon$. Clearly, the cut off function equals $1$ on $\Omega_\epsilon$, and thus $u^N$ solves \eqref{NS-u_n} on the set $\Omega_\epsilon\times[0,T]$. 
		
		\textbf{Part 2.}
		With the delayed blow-up on a finite time interval proved, we proceed to further extend the existence time of $u^N$. In what follows, set $\epsilon=\frac{\delta_0}{2}$, where $\delta_0$ is the constant in Proposition \ref{small-GWP}. The exponential decay of $\|u\|_{H^{2b+a}}$ \eqref{s-exp} implies that for sufficiently large $T$, $\|u(t)\|_{H^{2b+a}}\leq \epsilon$ holds for every $t\in [T-1,T]$. We note that on $\Omega_\epsilon$, we have
		\begin{equation*}
			\|u^N\|_{L^2(0,T;H^{2b+a})}\leq \|u^N\|_{L^2(0,T;H^{2b+a})} + \|u^N\|_{L^2(0,T;H^{2b+a})} \leq \epsilon+\epsilon =\delta_0
		\end{equation*}
		For every $\omega\in \Omega_\epsilon$, there exists $t=t(\omega)\in [T-1,T]$ such that
		\begin{equation*}
			\|u^N(t(\omega),\omega)\|_{H^{2b+a}}\leq \delta_0.
		\end{equation*}
		By Proposition \ref{small-GWP}, with such small initial data, the solution can be extended globally in time, which proves the desired result.
	\end{proof}
	
	\begin{remark}\label{NS-rmk}
		For $-1 \leq a < \frac{1}{4}$ (where $a=-1$ corresponds to the linear damping), the additional dissipation $\nu(-\Delta)^{1+a}$ does not yield global regularity. Consequently, well-posedness of the limit equation is established only under the condition of large noise intensity $\nu$ relative to the initial data (c.f. Proposition \ref{limit-NS}). This leads to the following result:
		\begin{theorem}
			Let $-1\leq a<\frac{1}{4}<\frac{1}{2}<2b+a$. Fix $R_0>0$ and $0<\epsilon<1$, then there exist $\nu_0=\nu_0(R_0)$ and $N_0=N_0(\nu,R_0,\epsilon)$, such that for all $\nu>\nu_0,\;N>N_0$ and initial data $u_0$ satisfying 
			\begin{equation*}
				u_0\in H^{2b+a} \quad \text{and}\quad \|u_0\|_{H^{2b+a}}\leq R_0, 
			\end{equation*}
			the equation \eqref{NS-u_n} admits a unique strong solution which is spatially smooth and, with a probability no less than $1-\epsilon$, exists globally on $[0,\infty)$.
		\end{theorem}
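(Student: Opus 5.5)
The plan is to follow the proof of Theorem \ref{Delay blow up-NS} step by step. The only change is in Part 1. There we replaced the unconditional bound of Proposition \ref{critical-hypervis}, valid for $a\ge\frac14$, by a large-noise estimate for the limit equation
\begin{equation*}
\partial_t u + \Pi(u\cdot\nabla u) = -\tfrac{3\nu}{5}(-\Delta)^{1+a}u + \Delta u,\qquad u(0)=u_0,\quad \|u_0\|_{H^{2b+a}}\le R_0 .
\end{equation*}
The inputs Proposition \ref{GWP-NS-cut-off}, Proposition \ref{small-GWP} and Theorem \ref{scaling limit-cut-off} all hold for every $a<\frac12<2b+a$. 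So the cut-off solutions $u^N$ exist globally, are smooth, satisfy \eqref{bound1}, and converge in probability in $\mathcal{X}$ to the solution of \eqref{hypervis-u-cutoff}, whatever the value of $R$.

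\textbf{Step 1: the limit equation.} This is the step I expect to be the main obstacle, and it is essentially Proposition \ref{limit-NS}. Put $n=2b+a$ and estimate in $H^n$:
\begin{equation*}
\frac{\d}{\d t}\|u\|_{H^n}^2 + 2\|u\|_{H^{n+1}}^2 + \tfrac{6\nu}{5}\|u\|_{H^{n+1+a}}^2 \lesssim \|u\|_{H^{3/2}}\|u\|_{H^{n+1/2}}^2 .
\end{equation*}
When $a\ge 0$, I interpolate both right-hand factors between $H^n$ and $H^{n+1+a}$ and apply Young's inequality, which gives
\begin{equation*}
\frac{\d}{\d t}\|u\|_{H^n}^2 + \nu\|u\|_{H^{n+1+a}}^2 \le C\nu^{-\gamma}\|u\|_{H^n}^{q}
\end{equation*}
for some $\gamma>0$ and $q>2$. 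The dissipation also gives $\|u\|_{H^n}^2\le \|u\|_{H^{n+1+a}}^2$, so $x=\|u\|_{H^n}^2$ satisfies $\dot x\le -\nu x + C\nu^{-\gamma}x^{q/2}$. Choosing $\nu_0(R_0)$ so that $C\nu^{-\gamma-1}R_0^{q-2}<\frac12$ makes $x$ nonincreasing, indeed exponentially decaying, which yields global existence, $\|u\|_{C([0,\infty);H^n)}\le R_0$, and $\|u(t)\|_{H^n}\le R_0e^{-\nu t/2}$.

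The case $a<0$, including the damping case $a=-1$, needs more care, because $(-\Delta)^{1+a}$ no longer dominates the derivative loss of the nonlinearity. There I would let the viscosity $\Delta$ absorb the top-order term, bounding it by $\frac12\|u\|_{H^{n+1}}^2 + C\|u\|_{H^n}^{p(n)}$ as in Proposition \ref{small-GWP}, and use $\nu(-\Delta)^{1+a}$ only to supply the linear damping $\tfrac{6\nu}{5}\|u\|_{H^n}^2$ (the frequencies satisfy $|\xi|\ge1$). This again gives $\dot x\le -c\nu x + Cx^{p(n)/2}$, and this time the superlinear constant does not depend on $\nu$. Taking $\nu\gg R_0^{p(n)-2}$ gives monotone decay of $x$ once more.

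\textbf{Step 2: coincidence with the untruncated equation.} I fix $R=R_0+1$. The bound from Step 1 makes the cut-off in \eqref{hypervis-u-cutoff} inactive, so the limit of Theorem \ref{scaling limit-cut-off} is the global solution above, for every $\nu>\nu_0$. Given $\epsilon$, I pick $N_0(\nu,R_0,\epsilon)$ so that the event $\Omega_\epsilon$ (closeness of $u^N$ and $u$ in $\mathcal{X}$ up to $\epsilon'<1$) has probability at least $1-\epsilon$. On $\Omega_\epsilon$ we have $\|u^N\|_{C([0,T];H^{n-\delta})}<R$, so $f_R\equiv1$ and $u^N$ solves \eqref{NS-u_n} on $[0,T]$.

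\textbf{Step 3: globalisation.} By the exponential decay, I choose $T$ with $\|u\|_{H^n}\le\delta_0/2$ on $[T-1,T]$, where $\delta_0$ comes from Proposition \ref{small-GWP} and does not depend on the $\sigma_k$. I take $\epsilon'\le\delta_0/2$. Closeness in $L^2(T-1,T;H^n)$ then gives some $t(\omega)\in[T-1,T]$ with $\|u^N(t(\omega))\|_{H^n}\le\delta_0$. Restarting from this time and using the strong Markov/pathwise uniqueness structure, Proposition \ref{small-GWP} extends the solution globally. Uniqueness and spatial smoothness come from Proposition \ref{GWP-NS-cut-off} together with the local theory.
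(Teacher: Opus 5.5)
Your proposal is correct and follows the paper's intended argument: rerun Parts 1--2 of the proof of Theorem \ref{Delay blow up-NS}, replacing the unconditional bound available for $a\ge\frac14$ by the large-$\nu$ estimate of Proposition \ref{limit-NS}, where the viscosity absorbs the nonlinearity and $\frac{3\nu}{5}(-\Delta)^{1+a}$ serves only as linear damping (this already works for every $a\ge-1$, so your separate interpolation argument for $a\ge0$ is valid but unnecessary). One step you leave implicit, as the paper also does, is that Theorem \ref{scaling limit-cut-off} is stated for a fixed $u_0$, so making $N_0$ uniform over the ball $\|u_0\|_{H^{2b+a}}\le R_0$ requires the standard contradiction argument with data $u_0^N$ converging weakly in $H^{2b+a}$ (hence strongly in $H^{2b+a-\delta}$), which goes through because \eqref{bound1} depends only on $R_0$ and the limit solutions depend continuously on the data.
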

		
		As indicated by \eqref{J_k-bdd}, control over the noise is maintained only when $a < \frac{1}{2}$. In the case of $a > \frac{1}{2}$, the $H^s$ a priori estimate closes if and only if $s = 2b + a$. Under the assumption $\min\{a, 2b+a\}>\frac{1}{2}$, the weak existence of solutions (in a probabilistic sense) to the truncated equation \eqref{NS-cut off} can be shown, whereas path-wise uniqueness and the instantaneous smoothing effect cannot be established. Recall that a weak solution on $[0,T]$ refers to a filtered probability space $(\Omega, \Fc, (\Fc_t), \Pb)$ equipped with a sequence of independent $(\Fc_t)$-Brownian motions $\{W^k\}$ and an $(\Fc_t)$-progressively measurable process $u$ satisfying $(1)$--$(2)$ in Definition \ref{NS-def sol}, where $C([0,T];H^{2b+a})$ is replaced by $L^\infty([0,T];H^{2b+a})$.
		This leads to the following:
		\begin{proposition}\label{a>0.5}
			Let $\min\{a, 2b+a\}>\frac{1}{2}$ and $\nu>0$. Fix $R_0,T>0$ and $0<\epsilon<1$, then there exists $N_0=N_0(\nu,R_0,\epsilon,T)$, such that for all $N>N_0$ and initial data $u_0$ satisfying 
			\begin{equation*}
				u_0\in H^{2b+a} \quad \text{and}\quad \|u_0\|_{H^{2b+a}}\leq R_0, 
			\end{equation*}
			we can construct a weak solution (in a probabilistic sense) to \eqref{NS-u_n} up to time $T$ with a probability no less than $1-\epsilon$.
		\end{proposition}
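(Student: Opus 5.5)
We would mimic Part 1 of the proof of Theorem \ref{Delay blow up-NS}, replacing strong solutions of the cut-off equation by probabilistically weak ones.

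\textbf{Step 1: weak solutions of the truncated equation.} Fix $\delta\in(0,2b+a-\frac12)$ and $R>0$, and consider \eqref{NS-cut off} with $\sigma_k=\sigma_k^N$. We construct a weak solution by Galerkin approximation. Let $\Pi_n$ be the projection onto Fourier modes $|\xi|\le n$, and consider the finite-dimensional Stratonovich SDE obtained by applying $\Pi_n$ to the nonlinearity and to each $\Lc_k^N$. Since $\Pi_n$ commutes with $\Lambda$, the projected noise $\Pi_n\Lc_k^N\Pi_n$ is still antisymmetric in $H^{2b+a}$. Hence the $m=0$ computation in Proposition \ref{GWP-NS-cut-off} applies verbatim: all noise terms cancel, and the nonlinear term is bounded by \eqref{I-bdd}. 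This gives a pathwise bound, uniform in $n$, $N$ and $\{\sigma_k\}$:
\begin{equation*}
\sup_{t\le T}\|u^{n}\|_{H^{2b+a}}^2+\int_0^T\|u^{n}\|_{H^{2b+a+1}}^2\,\d t\le C(R_0,R,\delta,T).
\end{equation*}
Lemma \ref{bound2} controls the It\^o corrector and the martingale in $H^{M}$ with $M=2b+a-2(1+a+|a|)$; this is uniform in $n$ because of the antisymmetry. Together with the bound on the nonlinearity used in the proof of Theorem \ref{scaling limit-cut-off}, it yields the analogue of \eqref{time-C^0.5}. Tightness then holds in $\mathcal X=L^2(0,T;H^{2b+a})\cap C([0,T];H^{2b+a-\delta})$. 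Skorokhod's theorem and the standard martingale identification produce a weak solution $u^N$ satisfying (2) of Definition \ref{NS-def sol}, with $u^N\in L^\infty(0,T;H^{2b+a})\cap L^2(0,T;H^{2b+a+1})$ obeying the same pathwise bound \eqref{bound1}. The condition $2b+a>\frac12$ and the smoothness of the finitely many $\sigma_k^N$ ensure every term of the weak formulation converges.

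\textbf{Step 2: scaling limit.} For each $N$, fix such a weak solution. Since \eqref{bound1} and \eqref{time-C^0.5} are uniform in $N$, the argument of Theorem \ref{scaling limit-cut-off} applies without using uniqueness of $u^N$. By Lemma \ref{det-martingale-limit} (adapted to the vector case using the $H^{2b+a}$ bound), the martingale vanishes. By Lemma \ref{S^N limit}, the corrector converges to $-\frac{3\nu}{5}(-\Delta)^{1+a}$. Every subsequential limit in law therefore solves the deterministic equation \eqref{hypervis-u-cutoff}. For $a>\frac12\ge\frac14$, that equation is globally well posed without cut-off (Proposition \ref{critical-hypervis}), with $\|u\|_{C([0,T];H^{2b+a})}\le R_0e^{C_{a,b}R_0^2/\nu^2}$. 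Uniqueness of the deterministic limit upgrades convergence in law to convergence in probability in $\mathcal X$, and this is uniform over the ball $\|u_0\|_{H^{2b+a}}\le R_0$.

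\textbf{Step 3: removing the cut-off.} Choose $R=R_0e^{C_{a,b}R_0^2/\nu^2}+1$. Take $N_0$ large enough that, for $N>N_0$, the event $\Omega_\epsilon=\{\|u^N-u\|_{C([0,T];H^{2b+a-\delta})}\le\epsilon\}$ has probability at least $1-\epsilon$. The norm $\|\cdot\|_{H^{2b+a-\delta}}$ is continuous along $u^N$, so on $\Omega_\epsilon$ it stays below $R-1+\epsilon<R$. Hence $f_R\equiv1$ there, and $u^N$ solves \eqref{NS-u_n} weakly up to $T$ on $\Omega_\epsilon$.

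\textbf{Main obstacle.} The delicate step is Step 2 without uniqueness. The weak solutions live on different probability spaces, so ``convergence in probability'' must be read through Skorokhod representations. More precisely, the statement is: for $N>N_0$ there exists a weak solution (probability space, Brownian motions and process) such that $\Pb(\Omega_\epsilon)\ge1-\epsilon$. Uniformity over the initial data also needs care. We would handle it by contradiction: suppose sequences $u_0^{N_j}$ and weak solutions violate the bound. Extract a weakly convergent subsequence of the initial data in $H^{2b+a}$, strongly convergent in $H^{2b+a-\delta}$. Then pass to the limit as above, using continuous dependence for the deterministic hyperviscous equation with respect to $H^{2b+a-\delta}$ data, which holds by a standard energy estimate for differences.
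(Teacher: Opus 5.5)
Your proposal is correct and follows the paper's own (only sketched) route. The paper likewise argues via probabilistically weak existence for the cut-off equation \eqref{NS-cut off}, obtained by compactness from the noise-independent pathwise $H^{2b+a}$ bound coming from antisymmetry; then the scaling limit of Theorem \ref{scaling limit-cut-off}, which needs only the uniform bounds and not uniqueness of $u^N$, towards the hyperviscous equation, globally well posed since $a\ge\frac14$; and finally removal of the cut-off on a high-probability event exactly as in Part 1 of the proof of Theorem \ref{Delay blow up-NS}. Your explicit treatment of the Skorokhod and varying-probability-space issue, and of uniformity over the ball of initial data via a contradiction and compactness argument, fills in details the paper leaves implicit.
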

		We point out that the strategy used in Theorem \ref{Delay blow up-NS} -- extending the solution's lifespan to infinity by exploiting the smallness of the data at time $T$ -- fails to carry over to the current setting. Although the solution remains small at $T$, the probabilistically weak solution constructed starting from $T$ may be defined on a different probability space.
		
		Finally, although we can prove path-wise uniqueness for every $a\in \R$ in the absence of the nonlinearity (c.f. Appendix \ref{linear-unique}), the method there does not readily extend to nonlinear setting, see \cite[p862, Remark]{Gal20} for details.
	\end{remark}

	\section{Delayed blow up for Euler equations}\label{Euler}
	In this section, we discuss the regularized effect by pseudo-transport noise for 3D Euler equations on torus. Consider the stochastic Euler equation on $\Tb^3$ perturbed by the pseudo-transport noise:
	\begin{equation}\label{Euler-u}
		\left\{
		\begin{aligned}
			& \d u^N + \Pi (u^N\cdot\nabla u^N)\,\d t =\sum_k \Lc_k u^N \circ \d W^k,\\
			& u^N(0)=u_0\in H^{2b+a},
		\end{aligned}
		\right.
	\end{equation}
	where $\circ$ denotes the Stratonovich integral, $\Lc_ku= (-\Delta)^{-b}\Pi\big( \sigma_k^N\cdot\nabla (-\Delta)^{a+b} u\big)$, $\Pi$ is the Leray projection,  $\{\sigma_k^N\}$ is defined as in \eqref{theta^N}, and $a,b \in \R$ are parameters satisfying $2b+a>\frac{5}{2}$. 
	The concepts of strong and weak solutions, in the probabilistic sense, are analogous to those introduced in the previous section, with two modifications: first, the regularity of the solution is now required to be in $L^\infty([0, T]; H^{2b+a})$; second, the viscosity term in Definition \ref{NS-def sol} (2) is omitted.
	\begin{theorem}\label{Delay blow-up Euler}
		Let $-1\leq a\leq 0$ and $2b+a>\frac{5}{2}$. Fix $R_0,T>0$ and $0<\epsilon<1$, then there exist $\nu_0=\nu_0(R_0)$ and $N_0=N_0(\nu,R_0,T,\epsilon)$, such that for all $\nu>\nu_0,\;N>N_0$ and initial data $u_0$ satisfying 
		\begin{equation*}
			u_0\in H^{2b+a} \quad \text{and}\quad \|u_0\|_{H^{2b+a}}\leq R_0, 
		\end{equation*}
		the equation \eqref{Euler-u} admits a unique strong solution up to time $T$ with a probability no less than $1-\epsilon$. Moreover, if $u_0\in C^\infty$, then $u$ is also smooth in space.
	\end{theorem}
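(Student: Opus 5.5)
We follow the three-step scheme used for Theorem \ref{Delay blow up-NS}: cut-off well-posedness, then a scaling limit, then a large-probability comparison with the limit. Here the limit equation is the deterministic Euler system with fractional dissipation,
\begin{equation*}
\partial_t u + f_R(\|u\|_{H^{2b+a-\delta}})\Pi(u\cdot\nabla u) = -\tfrac{3\nu}{5}(-\Delta)^{1+a}u,\qquad u(0)=u_0,
\end{equation*}
with $\delta>0$ chosen so that $2b+a-\delta>\frac52$.

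\textbf{Step 1: the cut-off equation.} Fix $R>0$ and consider \eqref{Euler-u} with the nonlinearity multiplied by $f_R(\|u\|_{H^{2b+a-\delta}})$. We prove global existence and pathwise uniqueness of solutions in $L^\infty(0,T;H^{2b+a})$, together with the pathwise bound
\begin{equation*}
\sup_t\|u\|_{H^{2b+a}}\le C(R_0,R,T),
\end{equation*}
uniformly in $N$.

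\emph{Energy estimate at level $n=2b+a$.} By the $H^n$-antisymmetry of $\Lc_k$, all noise contributions cancel in the It\^o formula. The nonlinearity is controlled by Kato--Ponce:
\begin{equation*}
|\langle \Lambda^n(u\cdot\nabla u),\Lambda^n u\rangle|\lesssim \|\nabla u\|_{L^\infty}\|u\|_{H^n}^2\lesssim \|u\|_{H^{n-\delta}}\|u\|_{H^n}^2.
\end{equation*}
This is a linear-in-$\|u\|_{H^n}^2$ bound under the cut-off, so Gr\"onwall closes it.

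\emph{Higher norms ($m>0$).} The commutator computation of Proposition \ref{GWP-NS-cut-off} gives $|J_k|\lesssim_k\|u\|_{H^{m+n+2a}}^2\le \|u\|_{H^{m+n}}^2$, because $a\le 0$. No viscosity is needed for this, and it is exactly why we must assume $a\le0$. This yields smoothness for $C^\infty$ data.

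\emph{Rigorous It\^o formula and uniqueness.} For $a\le 0$ the It\^o formula is classical. Uniqueness follows from an $H^{n-1}$ (or $L^2$) estimate on the difference of two solutions: the noise terms again cancel by antisymmetry in $H^{n}$. If one works at a lower level, one instead uses the $a\le0$ commutator bound. Existence is by Galerkin approximation plus Yamada--Watanabe. Removing the cut-off gives local well-posedness of \eqref{Euler-u} up to the stopping time at which $\|u\|_{H^{n-\delta}}$ reaches $R$.

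\textbf{Step 2: scaling limit.} We repeat the proof of Theorem \ref{scaling limit-cut-off} verbatim without the Laplacian.
\begin{itemize}
\item The $N$-uniform bounds come from Step 1 together with Lemma \ref{bound2}.
\item These give tightness in $C([0,T];H^{n-\delta})$ via Aubin--Lions and the time-H\"older estimate \eqref{time-C^0.5}.
\item The martingale part vanishes as in Lemma \ref{det-martingale-limit}.
\item The It\^o correction converges by Lemma \ref{S^N limit}.
\end{itemize}
The limit is unique because the limit equation is a subcritical cut-off Euler system with extra dissipation. Hence the whole sequence converges in probability in $C([0,T];H^{n-\delta})$.

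\textbf{Step 3: large noise makes the limit global (main obstacle).} We need a bound on the limit solution that is uniform in $R$ once $\nu>\nu_0(R_0)$, since we must choose $R$ after $\nu$ and the cut-off must be inactive. Presumably this is the content of the analogue of Proposition \ref{limit-NS}. We try an $H^n$ energy estimate:
\begin{equation*}
\frac{\d}{\d t}\|u\|_{H^n}^2+\tfrac{6\nu}{5}\|u\|_{H^{n+1+a}}^2\lesssim \|\nabla u\|_{L^\infty}\|u\|_{H^n}^2\lesssim \|u\|_{H^{n+1+a}}\|u\|_{H^n}^2,
\end{equation*}
where the last inequality uses $n+1+a>\frac52+1+a\ge \frac52$ (as $a\ge -1$) and Sobolev embedding. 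By Poincar\'e, $\|u\|_{H^{n+1+a}}\ge\|u\|_{H^n}$ since $1+a\ge0$. Setting $x=\|u\|_{H^n}^2$ gives
\begin{equation*}
\dot x\le -c\nu x + Cx^{3/2}/\nu,
\end{equation*}
or, more carefully, we absorb the right-hand side through
\begin{equation*}
\|u\|_{H^{n+1+a}}\|u\|_{H^n}^2\le \|u\|_{H^{n+1+a}}^2\,\|u\|_{H^n}.
\end{equation*}
If $\nu>\nu_0\sim C R_0$, then $x$ is nonincreasing from $x(0)\le R_0^2$, so $\|u\|_{C([0,T];H^n)}\le R_0$. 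We then take $R=R_0+1$, which makes the cut-off inactive.

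For the probabilistic conclusion, let $\Omega_\epsilon$ be the event where $\|u^N-u\|_{C([0,T];H^{n-\delta})}\le\epsilon$. There $\|u^N\|_{H^{n-\delta}}<R$ on $[0,T]$, so $u^N$ solves \eqref{Euler-u}, and $\Pb(\Omega_\epsilon)\ge1-\epsilon$ for $N>N_0$.

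The delicate point is Step 1 at $a=0$ and the uniformity of the $H^n$ bound for $u^N$ independently of $N$; the antisymmetry cancellation should handle both.
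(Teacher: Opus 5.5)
Your overall route is the paper's: truncate in $H^{n-\delta}$, get the pathwise $N$-uniform $H^n$ bound from the $H^n$-antisymmetry of $\Lc_k$ together with $\|\nabla u\|_{L^\infty}\lesssim\|u\|_{H^{n-\delta}}$, propagate higher norms using $|J_k|\lesssim_k\|u\|_{H^{m+n+2a}}^2$ for $a\le0$, pass to the limit in $C([0,T];H^{n-\delta})$, bound the limit for $\nu>\nu_0(R_0)$, and compare on $\Omega_\epsilon$. Your Step 3 handles $-\frac{3\nu}{5}(-\Delta)^{1+a}$ directly via Poincar\'e rather than quoting Proposition~\ref{limit-Euler}, which is written only for $a=-1$. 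Because that bound does not depend on $R$, you can take $R=R_0+1$, and this part is fine. The step that fails as written is pathwise uniqueness of the truncated equation in Step 1. Your Yamada--Watanabe argument, and hence strong existence, rests on it.

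Antisymmetry is available only in $H^n$, and an $H^n$ estimate for $w=u^1-u^2$ is not available for Euler. The term $\langle\Lambda^n(w\cdot\nabla u^1),\Lambda^n w\rangle$ requires $u^1\in H^{n+1}$, which in Proposition~\ref{GWP-NS-cut-off} came from the viscosity. At level $H^{n-1}$ the noise is indeed harmless, since $|J_k|\lesssim_k\|w\|_{H^{n-1+2a}}^2\le\|w\|_{H^{n-1}}^2$ for $a\le0$, and the transport terms close using $\|u^i\|_{H^n}$. However, the truncation contributes $\big(f_R(\|u^1\|_{H^{n-\delta}})-f_R(\|u^2\|_{H^{n-\delta}})\big)\Pi(u^2\cdot\nabla u^2)$, which is controlled only by $\|w\|_{H^{n-\delta}}$. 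Since $\delta<n-\frac52$, you have $\delta<1$ whenever $n\le\frac72$. Interpolation then gives only $\|w\|_{H^{n-\delta}}\le\|w\|_{H^{n-1}}^{\delta}\|w\|_{H^n}^{1-\delta}$, so for $y=\|w\|_{H^{n-1}}^2$ you get $\dot y\lesssim y+y^{(1+\delta)/2}$. This does not force $y\equiv0$, and at the $L^2$ level the exponent is worse.

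What does close is uniqueness up to the first time either $\|u^i\|_{H^{n-\delta}}$ reaches $R$, because there both cut-offs equal $1$. That suffices for the uniqueness assertion in the theorem. Strong existence must then come from one of the following:
\begin{itemize}
\item a local theory for the untruncated equation, or a Yamada--Watanabe/Gy\"ongy--Krylov argument that uses only uniqueness up to stopping times, with the truncated solutions and the scaling limit used only to show that this exit time exceeds $T$ with probability at least $1-\epsilon$;
\item truncation in a norm one derivative below $H^n$ that still controls $\|\nabla u\|_{L^\infty}$, which is possible only if $n>\frac72$.
\end{itemize}
The paper's sketch passes over the same point by appealing to Proposition~\ref{GWP-NS-cut-off}.

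The same objection applies to your claim that the truncated limit equation is unique when $a=-1$, where there is no smoothing. There the repair is immediate: your Step 3 bound holds for every solution once $\nu>\nu_0$, so the cut-off is inactive along any solution, and uniqueness follows from the $L^2$ estimate for the damped Euler equation.
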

	
	\begin{proof}
		The proof is completely analogous to that in Section \ref{NS-I}. Indeed, by Proposition \ref{GWP-NS-cut-off}, the noise does not introduce singularities into any energy estimate when $a\leq 0$. For the nonlinear term, with $n=2b+a>\frac{5}{2}$ and $m\geq 0$, we recall  the following classical estimate:
		\begin{equation*}
			|\<\Lambda^{m+n}(u\cdot\nabla u), \Lambda^{m+n}u\>|\lesssim_{m+n} \|\nabla u\|_{L^\infty}\|u\|_{H^{m+n}}^2\lesssim_{m+n,\delta} \|u\|_{H^{n-\delta}}\|u\|_{H^{m+n}}^2,
		\end{equation*}
		where $\delta\in (0,n-\frac{5}{2})$. Following the same arguments as in Proposition \ref{GWP-NS-cut-off}, the equation with cut off in $H^{n-\delta}$ is globally well-posed. Furthermore, the scaling limit indicates the emergence of an extra viscosity term $-\frac{3\nu}{5}(-\Delta)^{1+a}u$ in the limit equation. As shown in Proposition \ref{limit-Euler} that there exists  $\nu_0=\nu_0(R_0)$ such that the limit equation is globally well-posed provided that $\nu>\nu_0$. The desired results then follows as a direct consequence of this property and the scaling limit.
	\end{proof}
	
	Consistent with Remark \ref{NS-rmk}, for $a>0$, the noise is so singular that energy estimates can only be closed in $H^{2b+a}$. Consequently, we do not expect higher regularity even for smooth data, nor can we prove path-wise uniqueness (as estimates fail even at lower regularity). We thus only state the existence of probabilistically weak solutions:
	\begin{proposition}\label{a>0.25}
		Let $a\geq \frac{1}{4}$, $2b+a>\frac{5}{2}$ and $\nu>0$. Fix $R_0,T>0$ and $0<\epsilon<1$, then there exists $N_0=N_0(\nu,R_0,\epsilon,T)$, such that for all $N>N_0$ and initial data $u_0$ satisfying 
		\begin{equation*}
			u_0\in H^{2b+a} \quad \text{and}\quad \|u_0\|_{H^{2b+a}}\leq R_0, 
		\end{equation*}
		we can construct a weak solutions (in a probabilistic sense) to \eqref{Euler-u} up to time $T$ with a probability no less than $1-\epsilon$.
	\end{proposition}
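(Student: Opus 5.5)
The plan mirrors the proof of Theorem \ref{Delay blow up-NS}, with compactness replacing pathwise uniqueness and strong convergence. Fix $\delta\in(0,2b+a-\frac52)$ and a cut-off level $R$, and consider the truncated Euler equation
\begin{equation*}
\d u^N + f_R\big(\|u^N\|_{H^{2b+a-\delta}}\big)\Pi(u^N\cdot\nabla u^N)\,\d t=\sum_k\Lc_k^N u^N\circ\d W^k .
\end{equation*}

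\textbf{Step 1: weak solutions of the truncated equation.} Take a Galerkin approximation (projection onto Fourier modes $|\xi|\le L$). The projected noise operators remain antisymmetric in $H^{2b+a}$, so the $H^{2b+a}$ norm is not increased by the noise. The nonlinear term is bounded by the classical estimate $\|\nabla u\|_{L^\infty}\|u\|_{H^{2b+a}}^2\lesssim\|u\|_{H^{2b+a-\delta}}\|u\|_{H^{2b+a}}^2$, and the cut-off makes this linear in $\|u\|_{H^{2b+a}}^2$. Gr\"onwall then gives the pathwise bound $\sup_t\|u^{N,L}\|_{H^{2b+a}}\le R_0e^{CRT}$, uniform in $L$ and $N$. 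Lemma \ref{bound2} supplies time regularity in $H^M$ with $M=2b+a-2(1+a+|a|)$. By Aubin--Lions and Skorokhod one obtains a probabilistically weak solution $u^N\in L^\infty(0,T;H^{2b+a})\cap C([0,T];H^{2b+a-\delta})$ for each $N$. The quadratic term passes to the limit because convergence holds in $C_tH^{2b+a-\delta}$ with $2b+a-\delta>\frac52$. The noise terms pass to the limit weakly by testing against smooth $\varphi$, as in Definition \ref{NS-def sol}, after moving all operators onto $\varphi$.

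\textbf{Step 2: scaling limit in law.} The bounds of Step 1 are uniform in $N$, so the same tightness argument as in Theorem \ref{scaling limit-cut-off} applies. The martingale part vanishes by the argument of Lemma \ref{det-martingale-limit}, since its quadratic variation is controlled by $\sup_\xi|\widehat{Q^N}(\xi)||\xi|^{2a}\,\|u^N\|_{H^{2b+a}}^2\to0$. The It\^o--Stratonovich correction converges to $-\frac{3\nu}{5}(-\Delta)^{1+a}$ by Lemma \ref{S^N limit}. Hence every limit point solves the deterministic truncated equation with dissipation $-\frac{3\nu}{5}(-\Delta)^{1+a}u$ and no viscosity. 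Since $a\ge\frac14$, this hyperdissipative Euler system is globally well-posed for every $\nu>0$, with a bound $\|u\|_{C([0,T];H^{2b+a})}\le R_0e^{C_{a,b}R_0^2/\nu^2}$, by the analogue of Proposition \ref{critical-hypervis} for $\kappa=0$. The limit is therefore unique, and the whole sequence converges in law to this deterministic $u$. Convergence in law to a deterministic limit is the same as convergence in probability, on whatever probability spaces the $u^N$ live.

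\textbf{Step 3: removing the cut-off.} Choose $R>R_0e^{C_{a,b}R_0^2/\nu^2}+1$. Then
\begin{equation*}
\Pb^N\Big(\|u^N-u\|_{C([0,T];H^{2b+a-\delta})}\le1\Big)\ge1-\epsilon \quad\text{for all } N>N_0 .
\end{equation*}
On this event $\|u^N\|_{H^{2b+a-\delta}}<R$ on $[0,T]$, so $f_R=1$ and $u^N$ solves \eqref{Euler-u} up to time $T$. This gives the claim.

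\textbf{Main obstacle.} The delicate point is Step 1 when $a>\frac12$, or more generally $a>0$. Here $\Lc_k^N$ has order $1+2a$, and only the conserved $H^{2b+a}$ norm closes. We must check three things:
\begin{itemize}
\item The Galerkin projection commutes appropriately, so that the $H^{2b+a}$ antisymmetry survives. It does, because the projections are Fourier multipliers commuting with $(-\Delta)^s$, and $\Pi_L\Lc_k\Pi_L$ is antisymmetric.
\item The Stratonovich-to-It\^o correction is handled at the Galerkin level in finite dimensions.
\item In the limit identity only the weak formulation is needed, so no higher regularity is required.
\end{itemize}
A secondary point is justifying the global bound for the limit hyperdissipative Euler equation without viscosity. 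The plan is an $H^{2b+a}$ energy estimate, with Lions-type control of $\|\nabla u\|_{L^\infty}$ through the $L^2_tH^{2b+2a+1}$ dissipation, valid for $a\ge\frac14$.
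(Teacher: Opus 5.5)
Your proposal is correct and follows the route the paper intends. The paper gives no separate proof; it only points back to the cut-off/Galerkin/scaling-limit scheme of Section~\ref{NS-I} and Remark~\ref{NS-rmk}, with global well-posedness of the limit $\partial_t u+\Pi(u\cdot\nabla u)=-\frac{3\nu}{5}(-\Delta)^{1+a}u$ for $a\ge\frac14$ supplied by (the analogue of) the Lions-exponent estimate \eqref{s}, and your use of convergence in law to a deterministic limit is the right way to run Part~1 of the proof of Theorem~\ref{Delay blow up-NS} when the $u^N$ are only probabilistically weak solutions on varying probability spaces.

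The one step you should make explicit, which the paper also glosses over, is uniformity over the ball: convergence for a fixed $u_0$ only gives $N_0$ depending on $u_0$. To get $N_0=N_0(\nu,R_0,\epsilon,T)$, argue by contradiction along $N_j\to\infty$ and data $\|u_0^j\|_{H^{2b+a}}\le R_0$. A subsequence of the data converges strongly in $H^{2b+a-\delta}$, your tightness bounds depend only on $R_0,R,T$, and the limit from any datum in the ball stays below $R-1$, so the closed event $\{\sup_{t\le T}\|u^{N_j}(t)\|_{H^{2b+a-\delta}}\ge R\}$ has vanishing probability by the portmanteau theorem.
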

	
	\begin{proposition}\label{a<0.25}
		Let $0<a< \frac{1}{4}$, $2b+a>\frac{5}{2}$. Fix $R_0,T>0$ and $0<\epsilon<1$, then there exists $\nu_0=\nu_0(R_0)$ and $N_0=N_0(\nu,R_0,\epsilon,T)$, such that for all $\nu>\nu_0$, $N>N_0$ and initial data $u_0$ satisfying 
		\begin{equation*}
			u_0\in H^{2b+a} \quad \text{and}\quad \|u_0\|_{H^{2b+a}}\leq R_0, 
		\end{equation*}
		we can construct a weak solutions (in a probabilistic sense) to \eqref{Euler-u} up to time $T$ with a probability no less than $1-\epsilon$.
	\end{proposition}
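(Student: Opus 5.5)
We will prove Proposition \ref{a<0.25} by the same three-step scheme as Theorem \ref{Delay blow up-NS} and Theorem \ref{Delay blow-up Euler}: cut-off, scaling limit, and a well-posedness argument for the limit equation. The difference is that, since $a>0$, we work only at the conserved level $H^{2b+a}$ and settle for probabilistically weak solutions.

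\textbf{Step 1 (truncated problem).} Fix $n=2b+a>\frac52$, $\delta\in(0,n-\frac52)$ and $R>0$, and consider \eqref{Euler-u} with the nonlinearity multiplied by $f_R(\|u\|_{H^{n-\delta}})$. We build Galerkin approximations $u^{N,K}$ by projecting onto Fourier modes $|\xi|\le K$. Since $\Lc_k^N$ is antisymmetric in $H^n$, the projected noise is still antisymmetric, so the It\^o correction and the quadratic variation cancel exactly in $\d\|u^{N,K}\|_{H^n}^2$. The nonlinear term is bounded by $\|\nabla u\|_{L^\infty}\|u\|_{H^n}^2\lesssim \|u\|_{H^{n-\delta}}\|u\|_{H^n}^2$, which the cut-off controls by $R\|u\|_{H^n}^2$. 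This yields the pathwise bound $\sup_t\|u^{N,K}\|_{H^n}\le R_0e^{CRT}$, uniform in both $K$ and $N$.

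For time regularity, Lemma \ref{bound2} gives a uniform $W^{\beta,p}(0,T;H^M)$ bound. By Aubin--Lions, Prokhorov and Skorokhod we obtain a weak solution $u^N\in L^\infty(0,T;H^n)\cap C([0,T];H^{n-\delta})$ on some stochastic basis. We pass to the limit in the nonlinear term using strong convergence in $H^{n-\delta}$, and in the noise terms using the standard martingale identification.

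\textbf{Step 2 (scaling limit as $N\to\infty$).} We repeat the proof of Theorem \ref{scaling limit-cut-off} without the viscosity. The pathwise bound and Lemma \ref{bound2} give tightness of the laws of $u^N$ in $C([0,T];H^{n-\delta})$. The martingale part vanishes by the argument of Lemma \ref{det-martingale-limit}, which uses only the $H^n$ bound, and the It\^o--Stratonovich correction converges by Lemma \ref{S^N limit}. Every limit point therefore solves
\[
\partial_t u+f_R(\|u\|_{H^{n-\delta}})\Pi(u\cdot\nabla u)=-\tfrac{3\nu}{5}(-\Delta)^{1+a}u ,
\]
which is a deterministic equation with a unique strong solution. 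Hence the whole sequence converges in probability to it, because convergence in law to a deterministic limit implies convergence in probability.

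\textbf{Step 3 (removing the cut-off).} We invoke Proposition \ref{limit-Euler}, which we expect to cover dissipation of order $2+2a$ with $0<a<\frac14$: for $\nu>\nu_0(R_0)$, the untruncated limit equation is global and satisfies $\|u\|_{C([0,T];H^n)}\le C(R_0)$. We then choose $R>C(R_0)+1$, so the cut-off is inactive along the limit. On the event $\{\|u^N-u\|_{C([0,T];H^{n-\delta})}\le1\}$, whose probability is at least $1-\epsilon$ for $N>N_0$, we have $f_R\equiv1$ along $u^N$, so $u^N$ is a weak solution of \eqref{Euler-u} on $[0,T]$.

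\textbf{Main obstacle.} The hardest point is Step 3. The dissipation exponent $1+a<\frac54$ lies below the Lions threshold, so global regularity of the limit must come from the large intensity $\nu$. The plan is to do the $H^n$ energy estimate for the limit equation,
\[
\tfrac{\d}{\d t}\|u\|_{H^n}^2+\tfrac{6\nu}{5}\|u\|_{H^{n+1+a}}^2\lesssim\|\nabla u\|_{L^\infty}\|u\|_{H^n}^2 .
\]
Then interpolate $\|\nabla u\|_{L^\infty}\lesssim\|u\|_{H^{n}}^{1-\theta}\|u\|_{H^{n+1+a}}^{\theta}$, and use Young's inequality to absorb the top-order factor into the dissipation. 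This gives $\dot x\lesssim \nu^{-c}x^{q}$ for $x=\|u\|_{H^n}^2$ and some $q>1$, so for $\nu$ large the solution exists beyond time $T$ with a bound depending only on $R_0$.

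A secondary issue in Step 1 is that the projection must commute with the antisymmetry structure; since the Galerkin projection is an $H^n$-orthogonal Fourier truncation, this holds.
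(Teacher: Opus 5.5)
Your scheme is the one the paper intends. The paper gives no separate proof and refers back to the argument for Theorem \ref{Delay blow-up Euler}: a cut-off in $H^{n-\delta}$, then Galerkin approximation and compactness using the exact cancellation of the noise terms in $\d\|u\|_{H^n}^2$, then the scaling limit via Lemmas \ref{bound2}, \ref{det-martingale-limit} and \ref{S^N limit}, then large-$\nu$ global regularity of the limit, and finally removal of the cut-off on a high-probability event. Your Steps 1 and 2 are fine at the paper's level of detail.

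The gap is in how you get global regularity of the limit equation in Step 3. After absorbing the whole top-order term by Young you are left with only $\dot x\lesssim \nu^{-c}x^{q}$, $q>1$. This controls $x$ only up to a time of order $\nu^{c}R_0^{-2(q-1)}$. Requiring existence beyond $T$ therefore forces $\nu\gtrsim (T R_0^{2(q-1)})^{1/c}$, so your $\nu_0$ depends on $T$. The proposition asserts $\nu_0=\nu_0(R_0)$, with only $N_0$ allowed to depend on $T$.

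The missing observation is that the large-$\nu$ effect comes from the lower-order, damping part of the dissipation, not from its smoothing. For zero-mean fields $|\xi|^{2+2a}\ge 1$, so $\langle(-\Delta)^{1+a}u,u\rangle_{H^n}\ge\|u\|_{H^n}^2$. Since $n>\frac52$, you can also bound $\|\nabla u\|_{L^\infty}\lesssim\|u\|_{H^n}$ directly, with no interpolation. This gives
\[
\frac{\d}{\d t}\|u\|_{H^n}+\frac{3\nu}{5}\|u\|_{H^n}\le C\|u\|_{H^n}^2,
\]
which is exactly the inequality in the proof of Proposition \ref{limit-Euler}. Hence for $\nu>\frac53 C R_0$ the limit is global with $\sup_{t\ge 0}\|u(t)\|_{H^n}\le R_0$, independently of $T$. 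Equivalently, keep half of the dissipation in your Young step and use $\|u\|_{H^{n+1+a}}\ge\|u\|_{H^n}$. So this step is not really the main obstacle; being below the Lions exponent only explains why some largeness of $\nu$ is needed.

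A smaller point, which the paper also leaves implicit, concerns uniformity in the initial data. For $N_0$ to be uniform over the ball $\|u_0\|_{H^{2b+a}}\le R_0$, Step 2 should be run along sequences $u_0^N\rightharpoonup u_0$ weakly in $H^{2b+a}$, via a contradiction argument as in \cite{FlaLuo21}, rather than only for a fixed $u_0$.
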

	
	\section{Scaling limit combined with SMR theory}\label{NS-II}
	Throughout this section, we always consider the case $a\in [-1, 0)$. In \cite{Agr26}, Agresti utilized the $L^p$ maximal regularity of the (hyper) viscous term to improve the uniform estimates \eqref{bound1} for the transport noise in scaling limit, which plays a pivotal role in proving delayed blow up noise. In this section, we demonstrate that this strategy can be perfectly adapted to our current setting.
	\subsection{Uniform SMR estimates}
	Let $S^N$ denote the It\^o-Stratonovich term as in Lemma \ref{S^N limit}. Given its structure as a Fourier multiplier, we first provide an estimate for the order of $S^N$.
	
	\begin{lemma}\label{order-S^N}
		Fix $a\in [-1,0)$. For every $\delta\in (0,1)$, there exists a constant $C_\delta$ depending only on $\delta$ and $a$, such that
		\begin{equation*}
			\big|\widehat{S^{N}}(\xi)\big|\leq \delta |\xi|^2 + C_\delta |\xi|^{2+2a}, \quad \text{ for every } \xi\in \Z_0^3. 
		\end{equation*}
	\end{lemma}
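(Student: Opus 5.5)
Since $S^N$ is a Fourier multiplier, I will start from the explicit symbol \eqref{Fourier-S^N} (with $b=0$, which costs nothing because the $b$-conjugation leaves the symbol unchanged). Using $\|P^\perp\|\le 1$ and $|\xi^T\widehat{Q^N}(\eta)\xi|\le|\widehat{Q^N}(\eta)|\,|\xi|^2$, the target bound reduces to
\begin{equation*}
	\big|\widehat{S^N}(\xi)\big|\lesssim |\xi|^{2+2a}\sum_{\eta}|\widehat{Q^N}(\eta)|\,|\xi-\eta|^{2a},
\end{equation*}
where I have cancelled $|\eta|^{2a}$ against $|\xi-\eta|^{2a}/|\eta|^{2a}$. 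Recall from \eqref{Q^N} that $|\widehat{Q^N}(\eta)|\lesssim \nu\,\theta_\eta^2/|\theta^N|_{h^a}^2$ and $\sum_\eta|\widehat{Q^N}(\eta)||\eta|^{2a}\lesssim\nu$. So the whole task is to bound the weighted sum $\Sigma^N(\xi):=\sum_\eta|\widehat{Q^N}(\eta)||\xi-\eta|^{2a}$, with $a<0$, by $C_\delta+\delta|\xi|^{-2a}$. (The constant will also depend on $\nu$, but this is harmless since $\nu$ is fixed.)

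\textbf{Splitting the sum.} I will split into the regions $|\eta|\ge|\xi|/2$ and $|\eta|<|\xi|/2$.

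On the region $|\eta|<|\xi|/2$ we have $|\xi-\eta|\ge|\xi|/2$, and since $a<0$ this gives $|\xi-\eta|^{2a}\lesssim|\xi|^{2a}$. The contribution is therefore at most $|\xi|^{2a}\sum_{|\eta|<|\xi|/2}|\widehat{Q^N}(\eta)|$. Inserting $1=|\eta|^{2a}|\eta|^{-2a}$ with $|\eta|^{-2a}\le |\xi|^{-2a}$ bounds this by $\sum|\widehat{Q^N}||\eta|^{2a}\lesssim\nu$. After multiplying back by $|\xi|^{2+2a}$, this piece is $\lesssim|\xi|^{2+2a}$, which is acceptable.

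On the region $|\eta|\ge|\xi|/2$, the weight $|\xi-\eta|^{2a}$ is large only when $\eta$ is close to $\xi$, but it never exceeds $1$ because $|\xi-\eta|\ge 1$ on the lattice, or the term $\eta=\xi$ is absent. So this piece is bounded by $\sum_{|\eta|\ge|\xi|/2}|\widehat{Q^N}(\eta)|$. I will split this sum once more at a threshold $K$.

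\textbf{The threshold and the main obstacle.} For $|\eta|\ge K$, I write $|\widehat{Q^N}(\eta)|=|\widehat{Q^N}(\eta)||\eta|^{2a}|\eta|^{-2a}$. The remaining task, and the main obstacle, is to extract a small factor $\delta$ from this tail. The hope is that $|\eta|^{-2a}\lesssim|\xi|^{-2a}$ on this range, but this fails as stated: $|\eta|^{-2a}$ grows as $|\eta|$ grows, so the bound runs the wrong way. I therefore have to use the explicit weights. Since $\theta_\eta^2|\eta|^{2a}=|\eta|^{-2\gamma}$, we get
\begin{equation*}
	\sum_{|\eta|\ge|\xi|/2}|\widehat{Q^N}(\eta)|\lesssim \frac{\nu}{|\theta^N|_{h^a}^2}\sum_{|\xi|/2\le|\eta|\le N}|\eta|^{-2a-2\gamma}.
\end{equation*}
Because $\gamma\le 3/2$ and $-2a\le 2$, the exponent $-2a-2\gamma$ can be positive, so this sum may be dominated by its top end and need not be small relative to the normalisation.

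My fallback, which I would try first, is a cruder bound: $|\xi-\eta|^{2a}\le 1$ together with $\sum_\eta|\widehat{Q^N}(\eta)|\to0$. The latter holds because $\sum\theta_\eta^2/|\theta^N|_{h^a}^2\le \sup_\eta|\eta|^{-2a}\cdot(\dots)$, and in the typical regime the normalisation $|\theta^N|_{h^a}^2$ grows faster than $\sum\theta_\eta^2$ due to the extra $|\eta|^{2a}$ weighting. With this, $\Sigma^N(\xi)\lesssim 1$, so $|\widehat{S^N}(\xi)|\lesssim|\xi|^{2+2a}$. This gives the claim with no $\delta|\xi|^2$ term at all. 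I expect this verification, namely that $\sum_\eta|\widehat{Q^N}(\eta)|$ is bounded uniformly in $N$ for the chosen $\gamma$, to be the delicate point.

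If uniform boundedness fails for some $\gamma$, I will instead use $|\xi-\eta|^{2a}\le1$ only on the small-$|\eta|$ part. On the part where $|\eta|$ is large compared with $|\xi|$, I will use $|\xi-\eta|\sim|\eta|$ to recover $\sum|\widehat{Q^N}||\eta|^{2a}\lesssim\nu$. On the remaining annulus I will absorb the contribution through $|\xi|^{2+2a}\le\delta|\xi|^2+C_\delta$ via Young's inequality.
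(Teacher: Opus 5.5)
Your reduction to $\Sigma^N(\xi)=\sum_{\eta\neq 0,\xi}|\widehat{Q^N}(\eta)|\,|\xi-\eta|^{2a}$ is fine. So is your bound on $\{|\eta|<|\xi|/2\}$, and so is the use of $|\xi-\eta|\sim|\eta|$ where $|\eta|\gg|\xi|$ in your last paragraph. The gap is the near-diagonal region $|\eta|\sim|\xi|$. That region is the whole content of the lemma, and neither fallback handles it.

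\textbf{First fallback.} It rests on a false claim. Since $a<0$ we have $|\eta|^{2a}\le 1$, so
$|\theta^N|_{h^a}^2=\sum_\eta|\eta|^{2a}(\theta^N_\eta)^2\le\sum_\eta(\theta^N_\eta)^2$. The normalisation is \emph{smaller} than $\sum_\eta(\theta^N_\eta)^2$, not larger. In fact
$\sum_\eta|\widehat{Q^N}(\eta)|\approx\nu\sum_{|\eta|\le N}|\eta|^{-2a-2\gamma}\big/\sum_{|\eta|\le N}|\eta|^{-2\gamma}$
grows like $N^{-2a}$ (divided by $\log N$ when $\gamma=3/2$) for every admissible $\gamma$. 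So combining $|\xi-\eta|^{2a}\le 1$ with a uniform bound on $\sum_\eta|\widehat{Q^N}(\eta)|$ cannot work.

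\textbf{Second fallback.} It produces no smallness. The inequality $|\xi|^{2+2a}\le\delta|\xi|^2+C_\delta$ runs the wrong way, since a term of size $|\xi|^{2+2a}$ is already admissible. If on $\{|\xi|/2\le|\eta|\le 2|\xi|\}$ you bound $|\xi-\eta|^{2a}\le 1$, that piece is of order $\nu|\xi|^2(|\xi|/N)^{3-2\gamma}$ (for $\gamma<3/2$). When $|\xi|\sim N$ this is top order, $\sim\nu|\xi|^2$, with a constant that is not small.

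\textbf{How to close it.} There are two ways.

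The paper makes the annulus thin. Let $\Lambda_{\epsilon,\xi}=\{(1-\epsilon)|\xi|<|\eta|<(1+\epsilon)|\xi|\}$.
\begin{itemize}
\item Outside $\Lambda_{\epsilon,\xi}$, the ratio $|\xi-\eta|^{2a}/|\eta|^{2a}$ is bounded by $C(\epsilon,a)$. Together with $\sum_\eta|\widehat{Q^N}(\eta)||\eta|^{2a}\lesssim\nu$ this gives $C_\epsilon\nu|\xi|^{2+2a}$.
\item Inside $\Lambda_{\epsilon,\xi}$, use only $|\xi-\eta|\ge 1$. This costs $|\xi|^2$ times a multiple of $\nu$ times the fraction of the $h^a$-mass $\sum_{|\eta|\le N}|\eta|^{-2\gamma}$ carried by $\Lambda_{\epsilon,\xi}$. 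That fraction is $O(\epsilon)$ uniformly in $N$ and $\xi$; bounded $|\xi|$, where lattice sums are not comparable to integrals, is absorbed into $C_\delta$.
\end{itemize}

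Alternatively, keep the factor $|\xi-\eta|^{2a}$. On $\{|\xi|/2\le|\eta|\le 2|\xi|\}$ you have $|\widehat{Q^N}(\eta)|\lesssim\nu|\xi|^{-2a-2\gamma}/|\theta^N|_{h^a}^2$. Also $\sum_{0<|m|\le 3|\xi|}|m|^{2a}\lesssim|\xi|^{3+2a}$, because $2a\ge -2>-3$. So this piece of $\Sigma^N(\xi)$ is $\lesssim\nu|\xi|^{3-2\gamma}/|\theta^N|_{h^a}^2\lesssim\nu$. The last step holds because the region meets $\{|\eta|\le N\}$ only when $|\xi|\le 2N$.

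This second route proves the $N$-uniform bound $|\widehat{S^N}(\xi)|\lesssim\nu|\xi|^{2+2a}$ you were hoping for. Your target conclusion is therefore true, but not for the reason you give.
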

	\begin{proof}
		By \eqref{Fourier-S^N}, we have
		\begin{equation*}
			\big|\widehat{S^{N}}(\xi)\big|\lesssim |\xi|^{2+2a}\sum_{\eta\neq 0,\xi}\big|\widehat{Q^N}(\eta)\big|\,|\eta|^{2a} \frac{|\eta|^{-2a}}{|\xi-\eta|^{-2a}}.
		\end{equation*}
		For $\epsilon\in (0,1)$, the discussion is divided into three cases:
		\begin{itemize}
			\item $|\eta|\geq (1+\epsilon)|\xi|$, then $|\xi-\eta|\geq |\eta|-|\xi|\geq \frac{\epsilon}{1+\epsilon}|\eta|$, yielding
			\begin{equation*}
				\frac{|\eta|^{-2a}}{|\xi-\eta|^{-2a}}\leq \Big(\frac{1+\epsilon}{\epsilon}\Big)^{-2a}\leq 2^{-2a}\epsilon^{-2a}. 
			\end{equation*}
			\item $|\eta|\leq (1-\epsilon)|\xi|$, then $|\xi-\eta|\geq |\xi|-|\eta|\geq \epsilon|\xi|$, implying
			\begin{equation*}
				\frac{|\eta|^{-2a}}{|\xi-\eta|^{-2a}}\leq \Big(\frac{1-\epsilon}{\epsilon}\Big)^{-2a}\leq \epsilon^{-2a}. 
			\end{equation*}
			\item $|\eta|\in \Lambda_{\epsilon, \xi}:=\big((1-\epsilon)|\xi|, (1+\epsilon)|\xi|\big)$, noting that $|\xi-\eta|\geq 1$, we have
			\begin{equation*}
				\frac{|\eta|^{-2a}}{|\xi-\eta|^{-2a}}\leq 2^{-2a}|\xi|^{-2a}.
			\end{equation*}
		\end{itemize}
		Hence we arrive at
		\begin{align*}
			\big|\widehat{S^{N}}(\xi)\big|&\lesssim_a |\xi|^{2}\sum_{|\eta|\in \Lambda_{\epsilon,\xi}} \big|\widehat{Q^N}(\eta)\big|\,|\eta|^{2a} + \epsilon^{-2a}|\xi|^{2+2a}\sum_{\eta}\big|\widehat{Q^N}(\eta)\big|\,|\eta|^{2a} \\
			&:= J_1+ J_2.
		\end{align*}
		By \eqref{Q^N},  it is obvious that for every $\xi\in \Z_0^3$, we have $J_2\lesssim_\nu \epsilon^{-2a} |\xi|^{2+2a}$. For term $J_1$, we have
		\begin{align*}
			J_1\leq  \frac{|\xi|^2}{|\theta^N|_{h^a}^2}\sum_{\eta\in \Lambda_{\epsilon,\xi}} (\theta_\eta^N)^2|\eta|^{2a} 
			\lesssim  \frac{|\xi|^2}{|\theta^N|_{h^a}^2}\int_{|\eta|\in \Lambda_{\epsilon,\xi}\cap (1,N)}|\eta|^{-2\gamma}\,\d \eta,
		\end{align*}
		where $\gamma\in [0,3/2]$. We restrict our attention to the case $\gamma<3/2$, as the derivation for $\gamma = 3/2$ is entirely analogous. An elementary computation shows that
		\begin{equation*}
			J_1\lesssim \frac{|\xi|^2}{|\theta^N|_{h^a}^2}\int_{(1-\epsilon)|\xi|\wedge N}^{(1+\epsilon)|\xi|\wedge N} r^{2-2\gamma}\,\d r\lesssim_\gamma \Big((1+\epsilon)^{3-2\gamma}-\Big(\frac{1-\epsilon}{1+\epsilon}\Big)^{3-2\gamma}\Big)  |\xi|^2.
		\end{equation*}
		Combining the estimates of $J_1$ and $J_2$, we arrive at, for every $\xi\in \Z_0^3$, 
		\begin{equation*}
			\big|\widehat{S^{N}}(\xi)\big|\leq C(a,\nu,\gamma) \Big((1+\epsilon)^{3-2\gamma}-\Big(\frac{1-\epsilon}{1+\epsilon}\Big)^{3-2\gamma}\Big)  |\xi|^2 + C(a,\nu,\gamma)  \epsilon^{-2a} |\xi|^{2+2a}.
		\end{equation*}
		The assertion then follows directly by choosing $0<\epsilon<1$ such that $$C(a,\nu,\gamma) \Big((1+\epsilon)^{3-2\gamma}-\Big(\frac{1-\epsilon}{1+\epsilon}\Big)^{3-2\gamma}\Big) \leq \delta.$$
		This completes the proof.
	\end{proof}
	
	To facilitate the presentation, we introduce the notations $X_0 = H^{2b+a-1}$ and $X_1 = H^{2b+a+1}$. For any $\theta \in [0,1]$ and $p \in [1, \infty)$, we recall the following classical interpolation results:
	\begin{itemize}
		\item \textbf{Complex interpolation:} $X_\theta = (X_0, X_1)_\theta = H^{2b+a+2\theta-1}$;
		\item \textbf{Real interpolation:} $X_{\theta,p} = (X_0, X_1)_{\theta,p} = B_{2,p}^{2b+a+2\theta-1}$.
	\end{itemize}
	In particular, the trace space is denoted by $X_{tr} = X_{1-\frac{1}{p}, p}=B_{2,p}^{2b+a+1-\frac{2}{p}}$. The radonifying operators between a Hilbert space $H$ and a Banach space $X$ are denoted by $\gamma(H,X)$; for the exact definition, see the survey \cite{VNVW15}. Here we only use the canonical isomorphism from \cite[eq. (2.3)]{VNVW12}
	\begin{equation}\label{isomorphism}
		\gamma(l^2;X_{1/2}) \simeq H^{2b+a}(\Tb^3;l^2).
	\end{equation}
	
	Fix $a\in [-1,0),p\in [2,\infty)$ and progressively measurable processes $f\in L^p(\Omega\times\R_+;X_0),g=(g_k)\in L^p(\Omega\times\R_+;\gamma(l^2;X_{1/2}))$. Consider the following stochastic Stokes equations:
	\begin{equation}\label{Stokes-v}
		\left\{
		\begin{aligned}
			& \d v^N =\Delta v^N\,\d t + \sum_k (\Lc_k^N v^N + g_k)\d W^k + (S^N(v^N)+f)\,\d t,\\
			& v^N(s)=v_s^N\in L^p((\Omega,\Fc_s,\Pb);X_{tr}),
		\end{aligned}
		\right.
	\end{equation}
	where $s\geq 0$. Fix $T\in (s,\infty]$ and a stopping time $\tau:\Omega\rightarrow [s,T]$. A progressively measurable process $v^N: \Omega\times[s,\tau]\rightarrow X_1$ is said to be a solution to \eqref{Stokes-v} on $(s,\tau)$ provided $v^N\in L^p((s,\tau); X_1),\Pb\text{-a.s.}$ and, $\Pb\text{-a.s.}$ for every $t\in (s,\tau)$,
	\begin{equation*}
		v^N(t)=v_s^N+\int_{s}^{t} \big( \Delta v^N + S^N(v^N) + f\big)\,\d r + \sum_k\int_{s}^{t} \mathbb{I}_{[0,\tau]}(\Lc_k^N v^N + g_k)\d W_r^k.
	\end{equation*} 
	
	By adapting the arguments from \cite[Theorem 3.1]{Agr26}, we establish uniform maximal $L^p$-regularity estimates for the stochastic Stokes equations \eqref{Stokes-v}.
	\begin{proposition}\label{USMR}
		Fix $a\in [-1,0)$ and $p\in [2,\infty)$. Let $s,T,\tau$ be defined as above. Then there exists a unique solution to \eqref{Stokes-v} on $(s,\tau)$, which satisfies the following estimate: 
		\begin{itemize}
			\item[(1).] If $T<\infty$, then there exists $C_1=C_1(\nu,p,T-s)$ such that
			\begin{align*}
				&\E\bigg[ \sup_{t\in [s,\tau]}\|v^N(t)\|_{X_{tr}}^p\bigg] + \E\bigg[ \int_{s}^{\tau}\|v^N(t)\|_{X_{1}}^p\,\d t\bigg]\\
				&\leq C_1 \E\Big[\|v_s^N\|_{X_{tr}}^p\Big] + C_1\E\bigg[\int_{s}^{\tau}\Big(\|f(t)\|_{X_0}^p + \|g(t)\|_{L^p(\gamma(l^2;X_{1/2})}^p\Big)\,\d t\bigg].
			\end{align*}
			\item[(2).] If $v^N\in L^p(\Omega\times (s,\tau);X_{0})$, then there exists $C_2=C_2(\nu,p)$ such that
			\begin{align*}
				&\E\bigg[ \sup_{t\in [s,\tau]}\|v^N(t)\|_{X_{tr}}^p\bigg] + \E\bigg[ \int_{s}^{\tau}\|v^N(t)\|_{X_{1}}^p\,\d t\bigg]\\
				&\leq  C_1\E\bigg[\int_{s}^{\tau}\Big(\|f(t)\|_{X_0}^p + \|g(t)\|_{L^p(\gamma(l^2;X_{1/2})}^p\Big)\,\d t\bigg]\\ 
				&\quad+C_2 \E\Big[\|v_s^N\|_{X_{tr}}^p\Big] + C_2\E\bigg[ \int_{s}^{\tau}\|v^N(t)\|_{X_{0}}^p\,\d t\bigg].
			\end{align*}
		\end{itemize}
	\end{proposition}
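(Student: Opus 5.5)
The approach is a perturbation of the stochastic maximal $L^p$-regularity of the Stokes operator $\Delta$ on $X_0=H^{2b+a-1}$. The key point is that, for $a<0$, both the It\^o correction $S^N$ and the noise operators $\Lc_k^N$ are of lower order than the leading part, uniformly in $N$.

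\textbf{Step 1: the unperturbed problem.} The Stokes operator $-\Delta$ restricted to divergence-free, mean-zero fields on $X_0$ is a positive self-adjoint Fourier multiplier. It therefore has a bounded $H^\infty$-calculus of angle $0$ on the Hilbert space $X_0$. By the stochastic maximal regularity theorem of van Neerven--Veraar--Weis \cite{VNVW12A}, the problem
\begin{equation*}
\d v=\Delta v\,\d t+f\,\d t+\sum_k g_k\,\d W^k, \qquad v(s)=v_s,
\end{equation*}
then has a unique solution. It satisfies the estimates (1) and (2) with constants depending only on $p$ (and on $T-s$ in (1)), in the norms $L^p(X_1)$ and $C(X_{tr})$, with data $f\in L^p(X_0)$, $g\in L^p(\gamma(l^2;X_{1/2}))$ and $v_s\in L^p(\Omega;X_{tr})$. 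We use \eqref{isomorphism} to identify $\gamma(l^2;X_{1/2})$ with $H^{2b+a}(\Tb^3;l^2)$.

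\textbf{Step 2: uniform lower-order bounds for the perturbations.}

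For the drift, Lemma \ref{order-S^N} gives, for every $\delta\in(0,1)$,
\begin{equation*}
\|S^N v\|_{X_0}\le \delta\|v\|_{X_1}+C_\delta\|v\|_{H^{2b+a+1+2a}}.
\end{equation*}
Since $2a<0$, interpolation between $X_0$ and $X_1$ lowers the last term further to $\delta\|v\|_{X_1}+C'_\delta\|v\|_{X_0}$, uniformly in $N$.

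For the noise, we bound $\sum_k\|\Lc_k^N v\|_{H^{2b+a}}^2$ by a Fourier computation as in Lemma \ref{bound2}. This gives
\begin{equation*}
\sum_k\|\Lc_k^N v\|_{H^{2b+a}}^2\lesssim\sum_{\xi,\eta}|\xi|^{2+4a+2(2b)}|\hat v(\xi)|^2\,|\widehat{Q^N}(\eta)|\,|\eta|^{2a}\,\frac{|\xi-\eta|^{\cdots}}{|\eta|^{2a}}.
\end{equation*}
We then split into the three regions $|\eta|\ge(1+\epsilon)|\xi|$, $|\eta|\le(1-\epsilon)|\xi|$ and the annulus $\Lambda_{\epsilon,\xi}$, exactly as in Lemma \ref{order-S^N}. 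The mass of $|\widehat{Q^N}(\eta)||\eta|^{2a}$ on the annulus is $O(\epsilon)$ uniformly in $N$. This yields
\begin{equation*}
\|(\Lc_k^N v)_k\|_{\gamma(l^2;X_{1/2})}\le \delta\|v\|_{X_1}+C_\delta\|v\|_{X_0},
\end{equation*}
uniformly in $N$.

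\textbf{Step 3: perturbation and the method of continuity.} Following \cite[Theorem 3.1]{Agr26}, we first treat the problem with $\delta$-small relative bounds. We fix $\delta$ so small that $\delta$ times the maximal regularity constant is less than $1/2$. We first work on short time intervals, or with the shifted operator $\Delta-\lambda$ for large $\lambda$, which absorbs the $C_\delta\|v\|_{X_0}$ terms. We then run the method of continuity in the parameter $\lambda\in[0,1]$ for the family $\Delta+\lambda S^N$, $\lambda\Lc_k^N$. The a priori estimate, obtained by moving the perturbations into $f,g$ and absorbing, is uniform in $\lambda$ and $N$; this gives existence and uniqueness on $(s,\tau)$.

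To handle the lower-order $\|v\|_{L^p(X_0)}$ terms, we proceed in two ways. For (1) we use a Gronwall-type argument, combining $\|v\|_{X_0}^p\lesssim \sup_{r\le t}\|v(r)\|_{X_{tr}}^p$ with iteration over subintervals of fixed length; this gives the dependence on $T-s$. For (2) we simply keep the $X_0$ term on the right-hand side. Stopping times $\tau$ are handled by localization, truncating the coefficients with $\mathbb{I}_{[0,\tau]}$.

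\textbf{Main obstacle.} I expect the uniform-in-$N$ smallness of the noise perturbation in $\gamma(l^2;X_{1/2})$ to be the hardest step. Its order is exactly $1$, the borderline for stochastic maximal regularity, so the smallness must come from the annulus estimate, as in Lemma \ref{order-S^N}. There one has to check carefully that the leading coefficient, after subtracting the part cancelled against $S^N$, can be made small rather than merely bounded. If that fails, I would instead use the It\^o-correction structure: the quantity $2S^N+\sum_k(\Lc_k^N)^*\Lc_k^N$ is of lower order. Agresti exploits this via a parabolicity condition, which then holds uniformly in $N$.
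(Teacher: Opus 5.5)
Your proposal is correct and follows essentially the paper's route: stochastic maximal $L^p$-regularity for $-\Delta$ on $X_0$, then the $N$-uniform relative bounds $\|S^N v\|_{X_0}+\|\Lc^N v\|_{\gamma(l^2;X_{1/2})}\le \epsilon\|v\|_{X_1}+C_\epsilon\|v\|_{X_0}$ obtained from Lemma \ref{order-S^N} plus interpolation, then absorption of the $\epsilon$-terms. The differences are cosmetic: the paper gets the noise bound in one line from the anti-symmetry identity $\sum_k\|\Lc_k^N v\|_{H^{2b+a}}^2=-2\langle v,S^N v\rangle_{H^{2b+a}}$ (this is your fallback, and it shows that your direct Fourier computation yields exactly the multiplier of Lemma \ref{order-S^N}, so your ``main obstacle'' does not arise), and it quotes \cite[Theorem 3.2]{AV24} for existence and uniqueness instead of running a continuity argument.
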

	Note that for the case $p=2$, the anti-symmetry of the noises on $X_{1/2}$ allows for $N$-uniform estimates by the energy method in the previous section. It is precisely the viscosity term that makes it possible to get uniform estimates in $L^p(\mathbb{R}_+; X_1)$ for $p > 2$. We also mention that the above also holds with additional optimal space-time regularity estimates in the spaces $L^p(\Omega;H^{\theta,p}((s,\tau);X_{1-\theta}))$ for every $\theta\in [0,\frac{1}{2})$.
	\begin{proof}
		It suffices to prove (2), as the proof of (1) is similar and simpler.
		
		\noindent\textbf{Special case: $\sigma_k^N\equiv 0$.}  Note that the operator $-\Delta: X_1\rightarrow X_0$ is invertible and has a bounded $H^\infty-$calculus with angle less than $\frac{\pi}{2}$ (c. f. \cite[Example 3.2]{VNVW12}). Hence by stochastic maximal $L^p-$regularity \cite[Theorem 3.5]{VNVW12}, there exists a unique solution $v^N$ to \eqref{Stokes-v}, which satisfies,
		\begin{align*}
			& \|v^N\|_{L^p(\Omega;C((s,\tau);X_{tr}))} +  \|v^N\|_{L^p(\Omega\times(s,\tau);X_{1})}\\
			&\lesssim_{p} \|v_s\|_{L^p(\Omega;X_{tr})} +\|f\|_{L^p(\Omega\times(s,\tau);X_0)} + \|g\|_{L^p(\Omega\times(s,\tau);\gamma(l^2;X_{1/2}))}.
		\end{align*}
		\textbf{General case.} Let $\Lc^N=(\Lc_k^N)_k$. We claim that, for every $\epsilon\in (0,1)$, there exists $C_\epsilon>0$ independent of $N$, such that , for every $v\in X_1$,
		\begin{equation}\label{claim}
			\|S^N(v)\|_{X_0} + \|\Lc^N v\|_{\gamma(l^2,X_{1/2})}\leq \epsilon \|v\|_{X_1} + C_\epsilon \|v\|_{X_0}.
		\end{equation}
		Accepting this claim for now, the existence and the uniqueness of the solution to \eqref{Stokes-v} follows from \cite[Theorem 3.2]{AV24}. Then regarding $S^N$ and $\Lc^N$ as a perturbation and substituting \eqref{claim} into the estimates in \textbf{Special case}, we obtain
		\begin{align*}
			&\|v^N\|_{L^p(\Omega;C((s,\tau);X_{tr}))} +  \|v^N\|_{L^p(\Omega\times(s,\tau);X_{1})}\\
			&\lesssim_{p} \|v_s\|_{L^p(\Omega;X_{tr})} +\|f\|_{L^p(\Omega\times(s,\tau);X_0)} + \|g\|_{L^p(\Omega\times(s,\tau);\gamma(l^2;X_{1/2}))}  \\
			&\quad + \|S^N(v^N)\|_{L^p(\Omega\times(s,\tau);X_0)} + \|\Lc^N v^N\|_{L^p(\Omega\times(s,\tau);\gamma(l^2;X_{1/2}))} \\
			&\lesssim_{p} \|v_s\|_{L^p(\Omega;X_{tr})} +\|f\|_{L^p(\Omega\times(s,\tau);X_0)} + \|g\|_{L^p(\Omega\times(s,\tau);\gamma(l^2;X_{1/2}))}  \\
			&\quad + C_\epsilon  \|v^N\|_{L^p(\Omega\times (s,\tau);X_{0})} + \epsilon \|v^N\|_{L^p(\Omega\times (s,\tau);X_{1})}.
		\end{align*}
		The last term on the RHS can be absorbed into the LHS, provided that $\epsilon$ is chosen small enough.
		
		It remains to verify the claim \eqref{claim}. By Lemma \ref{order-S^N}, for every $\epsilon\in (0,1)$, we have
		\begin{align*}
			\|S^N(v)\|_{X_0}^2 &=\sum_{\xi} 	\big|\widehat{S^{N}(v)}(\xi)\big|^2\,|\xi|^{2(2b+a-1)} \\
			&\leq \epsilon \sum_{\xi} |\hat{v}(\xi)|^2\,|\xi|^{2(2b+a+1)} + C_\epsilon \sum_{\xi} |\hat{v}(\xi)|^2\,|\xi|^{2(2b+3a+1)} \\
			&= \epsilon \|v\|_{X_1}^2 + C_\epsilon \|v\|_{X_{1+a}}^2.
		\end{align*}
		Since $-1\leq a<0$ by assumption, by the interpolation $X_0\cap X_1\hookrightarrow X_{1+a}$, the claim holds for $S^N$. On the other hand, by the isomorphism \eqref{isomorphism}, we have
		\begin{align*}
			\|\Lc^N v\|_{\gamma(l^2,X_{1/2})}^2 &\simeq  \sum_k \big\<\Lc_k^N v, (-\Delta)^{2b+a}\Lc_k^N v\big\> \\
			& =\sum_k \big\<\sigma_k^N\cdot\nabla(-\Delta)^{a+b}v ,(-\Delta^a)\Pi\big(\sigma_k^N\cdot\nabla(-\Delta)^{a+b}v\big)\big\>\\
			& =-\big\<(-\Delta)^{2b+a}v ,S^N(v)\big\>.
		\end{align*}
		Taking Fourier transform and using Lemma \ref{order-S^N}, we get, for every $\epsilon\in (0,1)$,
		\begin{align*}
			\|\Lc^N v\|_{\gamma(l^2,X_{1/2})}^2 &\lesssim \sum_\xi \big|\widehat{S^{N}(v)}(\xi)\big|^2\,|\xi|^{2(2b+a)}  \\
			&\leq \epsilon \sum_\xi |\hat{v}(\xi)|^2|\xi|^{2(2b+a+1)} + C_\epsilon  \sum_\xi |\hat{v}(\xi)|^2|\xi|^{2(2b+2a+1)} \\
			&= \epsilon \|v\|_{X_1}^2 + C_\epsilon \|v\|_{X_{1+a/2}}^2.
		\end{align*}
		The desired claim follows from the interpolation $X_0\cap X_1\hookrightarrow X_{1+a/2}$.
	\end{proof}
	
	\subsection{Application to 3D NS equations}
	Let $a\in [-1,0)$. Consider the stochastic NS equations:
	\begin{equation}\label{NS-SMR section}
		\left\{
		\begin{aligned}
			&\d u^N +  
			\Pi(u^N\cdot\nabla u^N)\,\d t
			=\sum_k  \Lc_k^N u^N \circ \d W^k + \Delta u^N\,\d t,\\
			&u^N(0)=u_0\in X_{tr}=B_{2,p}^{2b+a+1-\frac{2}{p}}.
		\end{aligned}
		\right.
	\end{equation}
	
	The definition of strong solutions on time interval $[0,T]$ to \eqref{NS-SMR section} follows Definition \ref{NS-def sol}, except that the regularity space in (1) is replaced by $C([0,T];X_{tr}) \cap L^p(0,T;X_1)$. 
	
	\begin{theorem}\label{SMR-finite time-delayed blow-up}
		Let $a\in [-1,0)\text{ and }2b+a>-\frac{1}{2}$. Fix $T>0, \frac{4}{2(2b+a)+1}<p<\infty$,
		\begin{equation*}
			0<\epsilon<1 \quad \text{ and }\quad R_0>0.
		\end{equation*} 
		Then there exists $\nu_0=\nu_0(R_0)\text{ and }N_0=N_0(\nu,R_0,\epsilon,T)$, such that for all $\nu>\nu_0,\,N>N_0$ and initial data $u_0$ satisfying
		\begin{equation*}
			u^N(0)=u_0\in X_{tr}\quad \text{ and } \quad\|u_0\|_{X_{tr}}\leq R_0,
		\end{equation*}
		the equation \eqref{NS-SMR section} admits a unique strong solution (which is spatially smooth) that exists on the interval $[0, T]$ with probability at least $1-\epsilon$.
	\end{theorem}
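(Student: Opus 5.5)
We follow the scheme of Theorem \ref{Delay blow up-NS}, replacing the $L^2$ energy bounds \eqref{bound1} with the uniform SMR estimates of Proposition \ref{USMR}. The condition $p>\frac{4}{2(2b+a)+1}$ makes both $L^p(0,T;X_1)$ and $C([0,T];X_{tr})$ subcritical for 3D NS. Indeed, the scaling-critical pair for $L^p_tH^s_x$ is $\frac{2}{p}+s=\frac32$ with $s=2b+a+1$. The condition gives $\frac{2}{p}<2b+a+\frac12$, i.e.\ $\frac2p+2b+a+1-\frac32<2b+a+\frac{2}{p}$, which is strictly subcritical.

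\textbf{Step 1 (cut-off equation).} Fix $R>0$ and $\delta>0$ small, and consider \eqref{NS-SMR section} with the nonlinearity multiplied by $f_R(\|u^N\|_{Y})$. Here $Y$ is a subcritical norm strictly weaker than the SMR norm, for instance $Y=B^{2b+a+1-\frac2p-\delta}_{2,p}$ with $\delta$ chosen so that $Y$ stays subcritical. We take $f=-f_R\,\Pi(u\cdot\nabla u)$ and $g=0$. Local existence, uniqueness and smoothness follow from Proposition \ref{USMR}(1) by a fixed-point argument on stopping intervals, as in \cite{AV24,Agr26}. Subcriticality yields the bound
\begin{equation*}
\|f_R\Pi(u\cdot\nabla u)\|_{X_0}\le C_R\,\|u\|_{X_1}^{1-\kappa}\|u\|_Y^{\kappa}
\end{equation*}
for some $\kappa>0$. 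Young's inequality then absorbs this term into the left side of the SMR estimate on short intervals, and iterating gives a global solution. It also gives $N$-uniform moment bounds
\begin{equation*}
\sup_N\E\big[\|u^N\|_{C([0,T];X_{tr})}^p+\|u^N\|_{L^p(0,T;X_1)}^p\big]\le C(R_0,R,T),
\end{equation*}
using Proposition \ref{USMR}(1), whose constant is independent of $N$.

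\textbf{Step 2 (scaling limit).} From the uniform bounds and the time-regularity estimate of Lemma \ref{bound2}, we get tightness in $\mathcal X=L^p(0,T;X_{1-\delta'})\cap C([0,T];Y)$. We then identify the limit exactly as in Theorem \ref{scaling limit-cut-off}: Lemma \ref{S^N limit} handles the It\^o--Stratonovich term, and the martingale vanishes by the argument of Lemma \ref{det-martingale-limit}, using the pathwise conservation of the $H^{2b+a}$ norm. The limit is therefore the unique solution $\bar u$ of the deterministic cut-off hyperviscous equation \eqref{hypervis-u-cutoff}. Uniqueness of $\bar u$ together with Gy\"ongy--Krylov upgrades convergence in law to convergence in probability in $\mathcal X$.

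\textbf{Step 3 (large $\nu$ and conclusion).} Since $a<\frac14$, global regularity of the limit without cut-off needs $\nu$ large. We invoke Proposition \ref{limit-NS}, adapted to initial data in $X_{tr}$: for $\nu>\nu_0(R_0)$ the limit equation without cut-off has a global solution with $\sup_{[0,T]}\|\bar u\|_Y\le R-1$ for a suitable $R=R(R_0)$. With this choice, $\bar u$ also solves the cut-off problem. On the event $\Omega_\epsilon=\{\|u^N-\bar u\|_{C([0,T];Y)}\le \epsilon\}$, which has probability at least $1-\epsilon$ for $N>N_0$, the cut-off is inactive. Hence $u^N$ solves \eqref{NS-SMR section} on $[0,T]$ there, and uniqueness follows from uniqueness for the cut-off problem.

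\textbf{Main obstacle.} The main obstacle is Step 3: proving that the limit equation with dissipation $\Delta-\frac{3\nu}{5}(-\Delta)^{1+a}$ stays globally bounded in the $X_{tr}$/$Y$ topology for large $\nu$, uniformly over $\|u_0\|_{X_{tr}}\le R_0$, when $a\in[-1,0)$. The extra dissipation is of lower order than $\Delta$, so it helps only through its size. We would first attempt to use it as a damping producing decay $e^{-c\nu t}$ of low modes: an energy estimate in $H^{2b+a}$ gives $\int_0^T\|\bar u\|^2_{H^{2b+a+1+a}}$ small, of order $1/\nu$. Once the critical norm becomes small at some time, small-data theory applies, and parabolic SMR for the deterministic equation then transfers the control to $X_{tr}$.
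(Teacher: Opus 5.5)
Your proposal follows the paper's proof essentially step by step. The ingredients match: a cut-off in a subcritical norm strictly below $X_{tr}$, $N$-uniform bounds from Proposition \ref{USMR}(1) as in \cite[Theorem 6.2]{Agr26}, tightness via Lemma \ref{bound2} together with $X_{tr}\hookrightarrow H^{2b+a}$, identification of the limit through Lemma \ref{S^N limit}, global control of the limit by Proposition \ref{limit-NS}, and then Part 1 of the proof of Theorem \ref{Delay blow up-NS}. The only difference there is that the paper cuts off in $H^r$ with $r\in(\frac12,2b+a+1-\frac2p)$ rather than in your Besov space $Y$, which makes the bound on $\sup_{[0,T]}\|\bar u\|_{H^r}$ immediate.

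Your ``main obstacle'' is not really an obstacle. Since $1+a\ge 0$ and every Fourier mode has $|\xi|\ge 1$, the term $\frac{3\nu}{5}(-\Delta)^{1+a}$ damps all modes. The energy estimate in the subcritical space $H^r$ therefore gives $\dot x+c\nu x\le Cx^{q}$ with $q>1$, and $\nu>\nu_0(R_0)$ keeps $x=\|\bar u\|_{H^r}^2$ nonincreasing from time $0$; this is exactly Proposition \ref{limit-NS}. By contrast, the $H^{2b+a}$ energy estimate you sketch does not close for large data when $2b+a\le\frac12$, $2b+a\neq 0$, which is part of the theorem's range. Also, the critical relation for $L^p_tH^s_x$ is $s-\frac2p=\frac12$, not $\frac2p+s=\frac32$, although the condition $\frac2p<2b+a+\frac12$ you extract is the right one.
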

	\begin{proof}
		Clearly, both the spaces $C([0,T];X_{tr})$ and $L^p(0,T;X_1)$ are subcritical under the assumptions of the theorem.  Let $R>0$, $r\in (\frac{1}{2},2b+a+1-\frac{2}{p})$ and consider the stochastic NS equations with cut-off\footnote{Similarly to Section 3.2, we slightly abuse notation by letting $u^N$ represent the solutions of both the original system and its truncation.}:
		\begin{equation*}
			\left\{
			\begin{aligned}
				&\d u^N +  
				f_R(\|u^N\|_{H^r})\Pi(u^N\cdot\nabla u^N)\,\d t
				=\sum_k  \Lc_k^N u^N \circ \d W^k + \Delta u^N\,\d t,\\
				&u^N(0)=u_0\in X_{tr}=B_{2,p}^{2b+a+1-\frac{2}{p}}.
			\end{aligned}
			\right.
		\end{equation*}
		As the norm $H^r$ used for truncation is still subcritical, the maximal regularity estimates from Proposition \ref{USMR}(1) yield $N$-uniform estimates, following a derivation analogous to that of \cite[Theorem 6.2]{Agr26}: for every $T\in (0,\infty)$,
		\begin{equation}\label{bound1-SMR}
			\|u^N\|_{L^p(\Omega;C([0,T];X_{tr}))} +  \|u^N\|_{L^p(\Omega\times[0,T];X_{1})} \lesssim_{R,T} 1 + \|u_0\|_{X_{tr}}.
		\end{equation}
		The bound \eqref{bound1-SMR} plays the same role of \eqref{bound1} in this context. On the other hand, since $X_{tr} \hookrightarrow X_{1/2} = H^{2b+a}$ for $p > 2$, one can readily check from Lemma \ref{bound2} that the uniform bound \eqref{time-C^0.5} also holds for $\{u^N\}$. By virtue of the uniform bounds \eqref{bound1-SMR},\,\eqref{bound2} and the scaling limit in Lemma \ref{S^N limit}, one can show that the limit equation is 
		\begin{equation*}
			\partial_t u + f_R(\|u\|_{H^r})\Pi(u\cdot\nabla u) = \Delta u - \frac{3\nu}{5}(-\Delta)^{1+a}.
		\end{equation*}
		The solution properties are listed in Proposition \ref{limit-NS}.
		Then the delayed blow-up on finite time intervals can be established similarly to \textbf{Part 1} of the proof of Theorem ~\ref{Delay blow up-NS}.
	\end{proof}
	
	As shown in \textbf{Part 2} of Theorem \ref{Delay blow up-NS}, the key to extending the solution's lifetime to infinity lies in establishing ($N$-uniform) small-data global well-posedness for \eqref{NS-SMR section}. In the special case $2b+a=0$, where both the noise and the nonlinear terms preserve the $L^2$-norm, we obtain the following global control: $\Pb\text{-a.s.}$, for every $t\geq 0$,
	\begin{equation*}
		\|u^N(t)\|_{L^2}^2 + 2\int_{0}^{t}\|\nabla u^N(s)\|_{L^2}^2\,\d s = \|u_0\|_{L^2}^2,
	\end{equation*}
	which, combined with the Poincar\'e inequality and the Gr\"onwall inequality, yields the exponential decay 
	\begin{equation}\label{exp-decay}
		\|u^N(t)\|_{L^2}\leq e^{-t}\|u_0\|_{L^2}\lesssim_p e^{-t}\|u_0\|_{X_{tr}},
	\end{equation}
	where we have used $X_{tr}=B_{2,p}^{1-\frac{2}{p}}\hookrightarrow L^2$ when $p\geq2$. We note that, in view of the exponential decay \eqref{exp-decay}, Proposition \ref{USMR} (2) can be applied to $u^N$. In this context, the lower-order term on the RHS can be absorbed into the initial value $\|u_0\|_{X_{tr}}$. Proceeding exactly as in the proof of \cite[Theorem 4.3]{Agr26}, we obtain the following proposition (which corresponds to the case $\gamma=1$ in \cite[Theorem 4.3]{Agr26}): 
	\begin{proposition}[Global well-posedness--small data in critical space]
		Let $a\in [-1,0),\,2b+a =0\text{ and } p=4$. For every $\epsilon\in (0,1)$, there exists $\delta=\delta(\nu,\epsilon)>0$ such that for all $s\geq0$ and \begin{equation*}
			u^N(s)\in L^p((\Omega,\Fc_s,\Pb);X_{tr}) \quad \text{ and }\quad \|u^N(s)\|_{L^p(\Omega,X_{tr})}\leq \delta,
		\end{equation*}
		the local strong solution $u^N$ to \eqref{NS-SMR section} is global on $[s,\infty)$ with probability no less than $1-\epsilon$. 
	\end{proposition}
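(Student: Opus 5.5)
The plan is to run a critical-space fixed-point/bootstrap argument on $[s,\infty)$ with stopping times, using Proposition \ref{USMR}(2) as the linear estimate with $N$-uniform constants. With $2b+a=0$ and $p=4$ we have $X_0=H^{-1}$, $X_1=H^{1}$, $X_{1/2}=L^2$ and $X_{tr}=B^{1/2}_{2,4}$. These are critical for 3D NS, since $L^4(0,\infty;H^1)$ scales like $L^4_tH^1_x$.

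\textbf{Step 1: nonlinear estimate.} Set $f=-\Pi(u\cdot\nabla u)=-\Pi\divg(u\otimes u)$. By Sobolev embedding $H^{1/2}\hookrightarrow L^3$ and $L^{3/2}\hookrightarrow H^{-1/2}$, together with the interpolation $X_{tr}\hookrightarrow H^{1/2}$ up to Besov indices, we aim to show
\begin{equation*}
\|\Pi(u\cdot\nabla u)\|_{H^{-1}}\lesssim\|u\otimes u\|_{L^2}\lesssim\|u\|_{L^4}^2\lesssim\|u\|_{H^{1/2}}\|u\|_{H^1}.
\end{equation*}
Consequently,
\begin{equation*}
\|f\|_{L^4(s,\tau;X_0)}\lesssim\|u\|_{L^\infty(s,\tau;X_{tr})}\|u\|_{L^4(s,\tau;X_1)},
\end{equation*}
using $B^{1/2}_{2,4}\hookrightarrow H^{1/2-}$. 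Should the Besov endpoint fail, we would instead interpolate $\|u\|_{L^8_tH^{3/4}}$ from the two maximal-regularity norms, which still gives a quadratic bound in $\|u\|_{\mathcal{Z}(s,\tau)}:=\|u\|_{C([s,\tau];X_{tr})}+\|u\|_{L^4(s,\tau;X_1)}$.

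\textbf{Step 2: stopping-time bootstrap.} For a threshold $\eta>0$, define $\tau_\eta=\inf\{t\ge s:\|u\|_{\mathcal{Z}(s,t)}\ge\eta\}\wedge\tau^*$, where $\tau^*$ is the maximal existence time. On $[s,\tau_\eta]$, apply Proposition \ref{USMR}(2) with $g=0$ and $f$ as above. The lower-order term $\E\int_s^{\tau_\eta}\|u\|_{X_0}^p$ is controlled by the exponential decay \eqref{exp-decay}: since $\|u\|_{H^{-1}}\le\|u\|_{L^2}\le e^{-(t-s)}\|u(s)\|_{L^2}$, we get $\E\int_s^\infty\|u\|_{X_0}^4\lesssim\E\|u(s)\|^4_{X_{tr}}$. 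It is therefore absorbed into $\delta^4$, uniformly in $N$. The quadratic term gives $\E\|u\|^4_{\mathcal{Z}(s,\tau_\eta)}\le C\delta^4+C\eta^4\,\E\|u\|^4_{\mathcal{Z}(s,\tau_\eta)}$. Choosing $\eta$ with $C\eta^4\le 1/2$ yields $\E\|u\|^4_{\mathcal{Z}(s,\tau_\eta)}\le 2C\delta^4$, and Chebyshev's inequality gives $\Pb(\|u\|_{\mathcal{Z}(s,\tau_\eta)}\ge\eta)\le 2C\delta^4/\eta^4\le\epsilon$ once $\delta$ is small.

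\textbf{Step 3: conclusion.} On the complementary event, $\tau_\eta=\tau^*$ and $\|u\|_{\mathcal{Z}(s,\tau^*)}<\eta<\infty$. The blow-up criterion for the local strong solution, valid in these critical spaces in the style of \cite{AV24} and \cite[Theorem 4.3]{Agr26}, states that the solution can be continued as long as $\|u\|_{L^4(s,\tau^*;X_1)}$ stays finite. This forces $\tau^*=\infty$.

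\textbf{Main obstacle.} The key difficulty is the rigorous justification of the blow-up criterion and of applying Proposition \ref{USMR}(2) up to the random time $\tau_\eta$. Since $\tau^*$ is only a predictable time, we would apply the estimate on localizing times $\tau_\eta\wedge\tau_n\uparrow$ and pass to the limit by monotone convergence, as done in \cite{Agr26}. Uniformity in $N$ is automatic, because $C_2$ and the decay \eqref{exp-decay} are both independent of $N$.
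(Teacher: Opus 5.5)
Your proposal is correct and follows the paper's route. The paper likewise uses the $L^2$ energy equality (when $2b+a=0$ both the noise and the nonlinearity conserve the $L^2$ norm) to get $N$-uniform exponential decay, absorbs the lower-order $X_0$ term of Proposition \ref{USMR}(2) into $\|u(s)\|_{X_{tr}}$, and then runs the stopping-time and blow-up-criterion argument of \cite[Theorem 4.3]{Agr26}, which is what your Steps 1--3 do.

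One correction to Step 1. Your primary justification fails: $B^{1/2}_{2,4}\not\hookrightarrow H^{1/2}$, and the $H^{1/2-}$ route loses criticality. Your fallback does work, because real interpolation between distinct smoothness indices gives $\|u\|_{L^4}^2\lesssim\|u\|_{H^{3/4}}^2\lesssim\|u\|_{B^{1/2}_{2,\infty}}\|u\|_{H^1}\lesssim\|u\|_{X_{tr}}\|u\|_{X_1}$ pointwise in time, and this yields the critical quadratic bound.
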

	The key point of the above proposition is that the choice of $\delta$ is independent of $N$. The proof of the following theorem is identical to that of \cite[Theorem 2.2]{Agr26}; we thus refer the reader to \cite[Section 6.1]{Agr26} for further details.
	\begin{theorem}\label{SMR-delayed blow-up}
		Let $a\in [-1,0)\text{ and }2b+a =0$. Fix $4<p<\infty$,
		\begin{equation*}
			0<\epsilon<1 \quad \text{ and }\quad R_0>0.
		\end{equation*} 
		Then there exists $\nu_0=\nu_0(R_0)\text{ and }N_0=N_0(\nu,R_0,\epsilon)$, such that for all $\nu>\nu_0,\,N>N_0$ and initial data $u_0$ satisfying
		\begin{equation*}
			u^N(0)=u_0\in X_{tr}\quad \text{ and }\quad \|u_0\|_{X_{tr}}\leq R_0,
		\end{equation*}
		the equation \eqref{NS-SMR section} admits a unique strong solution which is spatially smooth and, with a probability no less than $1-\epsilon$, exists globally on $[0,\infty)$.
	\end{theorem}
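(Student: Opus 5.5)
The argument runs in two stages, mirroring Theorem \ref{Delay blow up-NS}: first delayed blow-up on a long finite interval $[0,T]$ via Theorem \ref{SMR-finite time-delayed blow-up}, then global extension via the small-data proposition above. Since $2b+a=0$ and $p>4$, we have $p>\frac{4}{2(2b+a)+1}=4$, so Theorem \ref{SMR-finite time-delayed blow-up} applies. The limit equation
\begin{equation*}
\partial_t u+\Pi(u\cdot\nabla u)=\Delta u-\tfrac{3\nu}{5}(-\Delta)^{1+a}u,\qquad u(0)=u_0,
\end{equation*}
is globally well-posed for $\nu>\nu_0(R_0)$ (Proposition \ref{limit-NS}). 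It also decays exponentially in $X_{tr}$: for $2b+a=0$ the energy identity and Poincar\'e give $L^2$ decay, and parabolic smoothing together with subcriticality bootstraps this to $\|u(t)\|_{X_{tr}}\lesssim e^{-ct}$ for $t\ge 1$. We then fix $T$ (depending on $R_0,\nu,\epsilon$) such that $\|u(t)\|_{X_{tr}}\le \delta/2$ for $t\in[T-1,T]$. Here $\delta=\delta(\nu,\epsilon/2)$ is the constant from the small-data proposition, taken with $p=4$.

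For the first stage, we run the truncated scheme as in the proof of Theorem \ref{SMR-finite time-delayed blow-up}, with $R$ larger than $\sup_t\|u\|_{H^r}+1$. We use the convergence $u^N\to u$ in probability in $L^p(0,T;X_1)\cap C([0,T];H^{r})$, or at least in $L^2(0,T;X_{1/2})$, which follows from the uniform bound \eqref{bound1-SMR}, the time regularity \eqref{time-C^0.5}, and compactness. This yields $N_0$ and an event $\Omega_1$ with $\Pb(\Omega_1)\ge 1-\epsilon/4$ on which the cutoff is inactive, so $u^N$ solves \eqref{NS-SMR section} on $[0,T]$ and is close to $u$ there.

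The second stage is the delicate one. The small-data proposition needs an $L^p(\Omega)$-small datum in $X_{tr}$ with $p=4$ at a deterministic or stopping time $s$, whereas the first stage only gives smallness in probability. Following \cite[Section 6.1]{Agr26}, we introduce the stopping time $\tau=\inf\{t\ge T-1:\|u^N(t)\|_{X_{tr,4}}\le \delta\}\wedge T$, where $X_{tr,4}=B^{1/2}_{2,4}$ is the critical trace space. Since $X_{tr}$ for $p>4$ embeds into $B^{1/2}_{2,4}$ only up to the needed room, we use instead the $L^p(T-1,T;X_1)$ closeness, which gives some time where $u^N$ is small in a subcritical, hence critical, norm. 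We restart at $\tau$ from the datum $u^N(\tau)\mathbb{I}_{\{\|u^N(\tau)\|\le\delta\}}$, which is deterministically bounded by $\delta$, so its $L^4(\Omega)$ norm is at most $\delta$. The proposition applies at stopping times by the strong Markov structure, or by the $\Fc_s$-measurable formulation already allowed there. By uniqueness this solution coincides with $u^N$ on $\Omega_1\cap\{\tau<T\}$.

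This yields global existence with probability at least $1-\epsilon/4-\epsilon/2\ge 1-\epsilon$. Spatial smoothness comes from the bootstrap in Proposition \ref{GWP-NS-cut-off}, and uniqueness from the local theory. The main obstacle is justifying the restart at a random time with smallness holding only on an event. If measurability issues arise at $\tau$, the fallback is to discretize $[T-1,T]$ into finitely many deterministic times and use a union argument.
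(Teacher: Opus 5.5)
Your two-stage plan matches the paper's argument, which simply defers to \cite[Section 6.1]{Agr26}: delayed blow-up on $[0,T]$ via Theorem~\ref{SMR-finite time-delayed blow-up}, then a restart in the small-data proposition (with $p=4$) from a localized $\Fc_s$-measurable datum $\mathbb{I}_{\Omega_1}u^N(s)$, whose $L^4(\Omega;B^{1/2}_{2,4})$ norm is deterministically at most $\delta$, followed by identification with $u^N$ on $\Omega_1$ via uniqueness. The stopping time and the discretization fallback are unnecessary, though: since $B^{1-2/p}_{2,p}\hookrightarrow H^r\hookrightarrow B^{1/2}_{2,4}$ for $r\in(\tfrac{1}{2},1-\tfrac{2}{p})$, the $C([0,T];H^r)$ convergence you already have gives $\|u^N(T)\|_{B^{1/2}_{2,4}}\le\delta$ on $\Omega_1$ once $T$ and the closeness tolerance are chosen suitably, so you can restart at the deterministic time $s=T$ exactly as the proposition is stated (by contrast, compactness does not yield the $L^p(0,T;X_1)$ convergence you invoke, and $L^2(0,T;X_{1/2})=L^2(0,T;L^2)$ closeness would be supercritical here).
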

	\section*{Acknowledgements}
	The authors would like to thank Dejun Luo for helpful discussions during the early stages of this project. The work of the first author is supported by the China Scholarship Council (Grant No. 202504910286). The second author acknowledges the partial support of the project PNRR - M4C2 - Investimento 1.3, Partenariato Esteso PE00000013 - \emph{FAIR - Future Artificial Intelligence Research} - Spoke 1 \emph{Human-centered AI}, funded by the European Commission under the NextGeneration EU programme, of the project \emph{Noise in fluid dynamics and related models} funded by the MUR Progetti di Ricerca di Rilevante Interesse Nazionale (PRIN) Bando 2022 - grant 20222YRYSP, and of the MUR Excellence Department Project awarded to the Department of Mathematics, University of Pisa, CUP I57G22000700001.

	\appendix
	
	\section{Proof of the commutator estimates (\ref{commutator1})-(\ref{commutator2})}\label{proof of commutator}
	First we recall the definition of the standard mollification operator $J_\epsilon$ on $\Tb^3$. Let $B(x,r)$  denote the open ball centered at $x$ and radius $r$. Fix a standard mollifying kernel $\rho\in C_c^\infty(B(0,1))$, which is a non-negative and radial function satisfying $\int_{\R^3}\rho(x)\,\d x=1$. For each $\epsilon\in (0,1)$, define $\rho_\epsilon(\cdot)=\epsilon^{-3}\rho(\epsilon^{-1}\cdot)$. The function $\rho_\epsilon$ induces a periodic function $\rho_\epsilon^{\text{per}}$, defined as
	\begin{equation*}
		\rho_\epsilon^{\text{per}}(\cdot)=\sum_{k\in \Z^3}\rho_\epsilon(\cdot-2\pi k).
	\end{equation*} 
	By Poisson summation formula (c.f. \cite[p.~254, 8.32]{Fol99}), the Fourier series of $\rho_\epsilon^{\text{per}}$ coincides with the Fourier transform of $\rho_\epsilon$, that is,
	\begin{equation*}
		\Fc\big[\rho_\epsilon^{\text{per}}\big](k)=\widehat{\rho_\epsilon}(k)=\hat{\rho}(\epsilon k),\quad \text{ for every } k\in\Z^3.
	\end{equation*}
	For every $f\in \mathcal{S}'(\Tb^3)$, define $J_\epsilon f=\rho_\epsilon^{\text{per}}\ast f$. We note that $\rho_\epsilon$ is supported in $B(0,\epsilon)\subset B(0,1) $, implying that for every $x\in \Tb^3$,
	\begin{align*}
		J_\epsilon f(x)&=\sum_{k\in \Z^3}\int_{[-\pi,\pi)^3}\rho_\epsilon(y-2\pi k)f(x-y)\,\d y\\ &=\int_{[-\pi,\pi)^3}\rho_\epsilon(y)f(x-y)\,\d y= \int_{\R^3}\rho_\epsilon(y)f(x-y)\,\d y=\rho_\epsilon\ast f(x).
	\end{align*}
	Henceforth, we identify $\rho_\epsilon^{\text{per}}$ with $\rho_\epsilon$, given that the convolution operators induced by them are equivalent.
	\begin{proof}[Proof of \eqref{commutator1}]
		First we note that
		\begin{equation*}
			[J_\epsilon\Lambda^n,u\cdot\nabla]  = J_\epsilon[\Lambda^n,u\cdot\nabla] +
			[J_\epsilon,u\cdot\nabla]\Lambda^n,
		\end{equation*}
		which implies
		\begin{align*}
			\big\<[J_\epsilon\Lambda^n,u\cdot\nabla] u, J_\epsilon\Lambda^n u\big\>
			&= \big\<J_\epsilon[\Lambda^n,u\cdot\nabla]u, J_\epsilon\Lambda^n u\big\> + \big\<[J_\epsilon,u\cdot\nabla]\Lambda^nu, J_\epsilon\Lambda^n u\big\>\\
			&=:I_1^\epsilon+I_2^\epsilon.
		\end{align*}
		It is obvious that as $\epsilon\rightarrow 0$, $I_1^\epsilon\rightarrow \big\<[\Lambda^n,u\cdot\nabla]u, \Lambda^n u\big\>$, hence it remains to prove $\lim\limits_{\epsilon\rightarrow 0}I_2^\epsilon=0$. By the Minkowski inequality  and  Sobolev embedding $H^{n+1}\hookrightarrow C^{\alpha}$, where $\alpha=\min\{n-1/2,1\}$, we have  
		\begin{equation*}
			\begin{split}
				\|[J_\epsilon,u\cdot\nabla]\Lambda^nu \|_{L^2(\Tb^3)} &= \bigg\|\int_{\Tb^3} \rho_\epsilon (y)\big(u(x-y)-u(x)\big)\cdot \nabla \Lambda^nu (x-y)\,\d y\bigg\|_{L_x^2}\\
				&\leq \int_{\Tb^3} |\rho_\epsilon(y)|\, \big\|\big(u(x-y)-u(x)\big) \cdot \nabla \Lambda^nu (x-y)\big\|_{L_x^2}\,\d y\\
				&\lesssim_{n} \|u\|_{H^{n+1}} \int_{\Tb^3} \rho_\epsilon (y) |y|^{\alpha}\|\nabla\Lambda^n u(x-y)\|_{L_x^2}\,\d y.
			\end{split}
		\end{equation*}
		Since the integral is invariant under translations, we get 
		\begin{equation*}
			\|[J_\epsilon,u\cdot\nabla]\Lambda^nu \|_{L^2(\Tb^3)}\lesssim_n \|u\|_{H^{n+1}}^2 \int_{\R^3} \rho_\epsilon (y) |y|^{\alpha}\,\d y\lesssim_n \epsilon^{\alpha}\|u\|_{H^{n+1}}^2.
		\end{equation*}
		Thus we obtain
		\begin{equation*}
			|I_2^\epsilon|\leq 	\|[J_\epsilon,u\cdot\nabla]\Lambda^nu \|_{L^2} \|J_\epsilon\Lambda^n u\|_{L^2}\lesssim_n \epsilon^{\alpha} \|u\|_{H^{n+1}}^2\|u\|_{H^{n}},
		\end{equation*}
		which proves the claim as $u\in C([0,T];H^{n})\cap L^2(0,T;H^{n+1}),\,\Pb\text{-a.s.}$
	\end{proof}
	
	Next we turn to the proof of \eqref{commutator2}. When estimating noise, general cases reduce to the special case $b=0$.
	\begin{proof}[Proof of \eqref{commutator2}]
		With loss of generality, assume $b=0$, then $n=2b+a=a$ and $\Lc_ku=\Pi\big(\sigma_k\cdot\nabla(-\Delta)^a u\big)$. As $\Pi$ commutes with $J_\epsilon$, we can write
		\begin{equation*}
			S_k^\epsilon u= \big[J_\epsilon,\Lc_k\big]u=\Pi\big[J_\epsilon,\sigma_k\cdot\nabla\big](-\Delta)^a u.
		\end{equation*}
		Suppose $P$ is a pseudo-differential operator of order $m$, whose symbol belongs to the standard H\"ormander class $S_{1,0}^m$ (c.f. \cite{Hor07}). By the classical theory, for every $s\in \R$ and $\theta\in [0,1]$,
		\begin{equation}\label{mollify-commutator}
			\|\big[J_\epsilon,P\big]f\|_{H^s}\lesssim_{k,s}\epsilon^\theta\|f\|_{H^{s+m-1+\theta}}.
		\end{equation} 
		The estimate $\|S_k^\epsilon\|_{H^a}\lesssim_{k} \epsilon^{1-2a} \|u\|_{H^{a+1}}$ follows immediately by \eqref{mollify-commutator}. 
		
		The commutator $T_k^\epsilon=\big[S_k^\epsilon,\Lc_k\big]$ can be split as follows:
		\begin{equation*}
			\begin{split}
				T_k^\epsilon  &=\big[\Pi\big[J_\epsilon,\sigma_k\cdot\nabla\big](-\Delta)^{a},\Pi(\sigma_k\cdot\nabla)(-\Delta)^{a}\big]\\
				&=\Pi \big[J_\epsilon,\sigma_k\cdot\nabla\big]\:\big[(-\Delta)^{a},\Pi(\sigma_k\cdot\nabla)\big]\:(-\Delta)^a \\
				&\quad+ \big[\Pi\big[J_\epsilon,\sigma_k\cdot\nabla\big],\Pi(\sigma_k\cdot\nabla)\big]\: (-\Delta)^{2a} \\
				&\quad+ \Pi(\sigma_k\cdot\nabla)\:\big[\Pi\big[J_\epsilon,\sigma_k\cdot\nabla\big],(-\Delta)^{a}\big]\:(-\Delta)^{a}\\
				&=:R_1^\epsilon + R_2^\epsilon + R_3^\epsilon.
			\end{split}
		\end{equation*} 
		Recall that our goal is to prove $\|T_k^\epsilon\|_{H^{-a}}\lesssim_k\epsilon^{1-2a}\|u\|_{H^{a+1}}$.
		\begin{itemize}
			\item [ (1).] Noting that the commutator $\big[(-\Delta)^{a},\Pi(\sigma_k\cdot\nabla)\big]$ is of order $2a$, it follows directly from \eqref{mollify-commutator} that $\|R_1^\epsilon\|_{H^{-a}}\lesssim_k\epsilon^{1-2a}\|u\|_{H^{a+1}}$.
			\item [(2).] We have
			\begin{equation*}
				\Pi\big[J_\epsilon,\sigma_k\cdot\nabla\big]= \big[J_\epsilon,\Pi(\sigma_k\cdot\nabla)\big],
			\end{equation*}
			yielding 
			\begin{equation*}
				R_2^\epsilon= \big[\big[J_\epsilon,\Pi(\sigma_k\cdot\nabla)\big],\Pi(\sigma_k\cdot\nabla)\big]\:(-\Delta)^{2a}.
			\end{equation*}
			Regarding the double commutator,  for every $s\in \R$ and $\theta\in [0,2]$, the following holds:
			\begin{equation}\label{double-commutator}
				\big\|\big[\big[J_\epsilon,\Pi(\sigma_k\cdot\nabla)\big],\Pi(\sigma_k\cdot\nabla)\big]f\big\|_{H^s}\lesssim_{k,s} \epsilon^\theta\|f\|_{H^{s+\theta}}.
			\end{equation}
			The estimate $\|R_2^\epsilon\|_{H^{-a}}\lesssim_k\epsilon^{1-2a}\|u\|_{H^{a+1}}$ is a direct consequence of \eqref{double-commutator}.
			\item[(3).] As $(-\Delta)^a$ commutes with $\Pi$, we get
			\begin{equation*}
				\big[\Pi\big[J_\epsilon,\sigma_k\cdot\nabla\big],(-\Delta)^{a}\big] = \Pi\big[\big[J_\epsilon,\sigma_k\cdot\nabla\big],(-\Delta)^{a}\big].
			\end{equation*}
			By Jacobi identity, we have 
			\begin{equation*}
				\big[\big[J_\epsilon,\sigma_k\cdot\nabla\big],(-\Delta)^{a}\big]=-\big[J_\epsilon,\big[(-\Delta)^{a},\sigma_k\cdot\nabla\big]\big],
			\end{equation*}
			hence we obtain
			\begin{equation*}
				I_3=-\Pi(\sigma_k\cdot\nabla)\:\big[J_\epsilon,\big[(-\Delta)^{a},\sigma_k\cdot\nabla\big]\big]\:(-\Delta)^{a}.
			\end{equation*}
			As $\big[(-\Delta)^{a},\sigma_k\cdot\nabla\big]$ is of order $2a$, it is easy to check that $\|R_3^\epsilon\|_{H^{-a}}\lesssim_k\epsilon^{1-2a}\|u\|_{H^{a+1}}$ follows from \eqref{mollify-commutator}.
		\end{itemize}
		To conclude the proof, we only need to show the validity of the double commutator estimate \eqref{double-commutator}. By Riesz-Thorin interpolation \cite[p.~200, 6.27]{Fol99}, it suffices to prove the endpoint cases $\theta=0$ and $\theta=1$. Since the expression involves the Leray projection, we shall perform estimates in frequency space. For simplicity of notation, let $M=\sigma_k\cdot\nabla$ and $U_\epsilon =\big[\big[J_\epsilon,M\big],M\big]$, then we have for every $\xi\in\Z_0^3$,
		\begin{align*}
			\widehat{Mf}(\xi)&= (2\pi)^{-\frac{3}{2}}P_\xi^\perp\sum_{\eta}\widehat{\sigma_k}(\xi-\eta)\cdot i\eta \hat{f}(\eta), \\
			\widehat{M^2f}(\xi)&=-(2\pi)^{-3}P_\xi^\perp\sum_{\eta,\zeta}\widehat{\sigma_k}(\xi-\zeta)\cdot\zeta\; \widehat{\sigma_k}(\zeta-\eta)\cdot\eta\; P_\zeta^\perp\hat{f}(\eta).
		\end{align*}
		For the double commutator $U_\epsilon$, we have
		\begin{align*}
			U_\epsilon=\big[\big[J_\epsilon,M\big],M\big] &=\big(J_\epsilon M-MJ_\epsilon\big)M-M\big(J_\epsilon M-MJ_\epsilon\big) \\
			&= J_\epsilon M^2-2MJ_\epsilon M+ M^2J_\epsilon.
		\end{align*}
		Taking Fourier transform and neglecting $-(2\pi)^{-3}P_\xi^\perp$ , we get
		\begin{align*}
			\widehat{U_\epsilon f}(\xi)&\approx \widehat{\rho_\epsilon}(\xi)\sum_{\eta,\zeta}\widehat{\sigma_k}(\xi-\zeta)\cdot\zeta\; \widehat{\sigma_k}(\zeta-\eta)\cdot\eta\; P_\zeta^\perp\hat{f}(\eta) \\
			&\quad-2 \sum_{\eta,\zeta}\widehat{\sigma_k}(\xi-\zeta)\cdot\zeta\; \widehat{\rho_\epsilon}(\zeta) \widehat{\sigma_k}(\zeta-\eta)\cdot\eta\; P_\zeta^\perp\hat{f}(\eta) \\
			&\quad+ \sum_{\eta,\zeta}\widehat{\sigma_k}(\xi-\zeta)\cdot\zeta\;  \widehat{\sigma_k}(\zeta-\eta)\cdot\eta\; \widehat{\rho_\epsilon}(\eta)P_\zeta^\perp\hat{f}(\eta) \\
			&=\sum_{\eta,\zeta} \widehat{\sigma_k}(\xi-\zeta)\cdot\zeta\; \widehat{\sigma_k}(\zeta-\eta)\cdot\eta \big(\widehat{\rho_\epsilon}(\xi)-2\widehat{\rho_\epsilon}(\zeta)+\widehat{\rho_\epsilon}(\eta)\big)P_\zeta^\perp\hat{f}(\eta).
		\end{align*}
		Now let $m=\xi-\zeta,\,l=\zeta-\eta$, then $\eta=\xi-(m+l)$. The above equation can be written as
		\begin{equation}\label{U-epsilon}
			\begin{split}
				\widehat{U_\epsilon f}(\xi)&=-(2\pi)^{-3}P_\xi^\perp\sum_{m,l}\widehat{\sigma_k}(m)\cdot(\eta+l)\; \widehat{\sigma_k}(l)\cdot\eta\; P_\zeta^\perp\hat{f}(\eta)\\
				&\quad \times\big(\widehat{\rho_\epsilon}(\eta+m+l)-2\widehat{\rho_\epsilon}(\eta+l)+\widehat{\rho_\epsilon}(\eta)\big).
			\end{split}
		\end{equation}
		Note that the kernel $\Phi_\epsilon=\big(\widehat{\rho_\epsilon}(\eta+m+l)-2\widehat{\rho_\epsilon}(\eta+l)+\widehat{\rho_\epsilon}(\eta)\big)$ is not the standard second-order difference $\Psi_\epsilon=\big(\widehat{\rho_\epsilon}(\eta+m+l)-\widehat{\rho_\epsilon}(\eta+l)-\widehat{\rho_\epsilon}(\eta+m)+\widehat{\rho_\epsilon}(\eta)\big)$. By replacing the former with the latter, we arrive at a new operator $V_\epsilon$ defined by
		\begin{equation}\label{V-epsilon}
			\begin{split}
				\widehat{V_\epsilon f}(\xi)&=-(2\pi)^{-3}P_\xi^\perp\sum_{m,l}\widehat{\sigma_k}(m)\cdot(\eta+l)\; \widehat{\sigma_k}(l)\cdot\eta\; P_\zeta^\perp\hat{f}(\eta)\\
				&\quad \times \big(\widehat{\rho_\epsilon}(\eta+m+l)-\widehat{\rho_\epsilon}(\eta+l)-\widehat{\rho_\epsilon}(\eta+m)+\widehat{\rho_\epsilon}(\eta)\big).
			\end{split}
		\end{equation}
		In what follows, we establish the estimate \eqref{double-commutator} for $V_\epsilon$ and $U_\epsilon-V_\epsilon$ in turn, which yields the desired assertion. The standard second-order difference $\Psi$ satisfies
		\begin{equation*}
			\begin{split}
				\Psi_\epsilon&=
				\int_{0}^{1}\int_{0}^{1} \frac{\d^2}{\d t_1\d t_2} \widehat{\rho}(\epsilon(\eta+t_1m+t_2l))\,\d t_1\d t_2\\
				&=\epsilon^2 \int_{0}^{1}\int_{0}^{1}  \nabla^2\widehat{\rho}(\epsilon(\eta+t_1m+t_2l)):m\otimes l\,\d t_1\d t_2.
			\end{split}
		\end{equation*}
		By assumption, $\hat{\rho}$ is a Schwartz function, yielding
		\begin{align}
			|\Psi_\epsilon|&\lesssim\epsilon^2 |m|\,|l|,\label{epsilon^2}\\
			|\eta+l|\,|\eta|\, |\Psi_\epsilon|&\lesssim |m|\,|l|\big(1 + \epsilon^2(|l|^2 +|m|^2)\big) \lesssim |m|^3|l|^3,\label{epsilon^0}
		\end{align}
		where the underlying constant is independent of $\epsilon\in (0,1)$ and $\eta,m,l\in \Z_0^3$. 
		Substituting \eqref{epsilon^2} into \eqref{V-epsilon}, we get
		\begin{align*}
			\big|\widehat{V_\epsilon f}(\xi)\big|&\lesssim \epsilon^2\sum_{m,l}|m|\,|\widehat{\sigma_k}(m)|\,|l|\,|\widehat{\sigma_k}(l)|\;|\eta+l|\; |\eta|\; |\hat{f}(\eta)| \\
			&\lesssim \epsilon^2\sum_{m,l}|m|\,|\widehat{\sigma_k}(m)|\,|l|\,|\widehat{\sigma_k}(l)|\; |\eta|^2\; |\hat{f}(\eta)| + \epsilon^2\sum_{m,l}|m|\,|\widehat{\sigma_k}(m)|\,|l|^3\,|\widehat{\sigma_k}(l)|\;  |\hat{f}(\eta)|.
		\end{align*}
		Recalling $m=\xi-\zeta$ and $l=\zeta-\eta$, both terms above are of convolution type. Hence by Young's inequality on $L^p$ with weight $|\cdot|^{2s}$, we obtain 
		\begin{equation}\label{V-epsilon2}
			\|V_\epsilon f\|_{H^s}\lesssim_{k,s} \epsilon^2 \|f\|_{H^{s+2}}.
		\end{equation}
		By estimate \eqref{epsilon^0}, analogous to the derivation above, we obtain 
		\begin{equation}\label{V-epsilon0}
			\|V_\epsilon f\|_{H^s}\lesssim_{k,s} \|f\|_{H^{s}}.
		\end{equation}
		Combining Riesz-Thorin interpolation and \eqref{V-epsilon2}-\eqref{V-epsilon0}, we have proved the estimate \eqref{double-commutator} for $V_\epsilon$. It remains to estimate the difference $U_\epsilon-V_\epsilon$ to complete the proof. By definition, still neglecting $-(2\pi)^{-3}P_\xi^\perp$, we have
		\begin{equation*}
			\begin{split}
				\widehat{(U_\epsilon-V_\epsilon) f}(\xi)&\approx\sum_{m,l}\widehat{\sigma_k}(m)\cdot(\eta+l)\; \widehat{\sigma_k}(l)\cdot\eta\;(\Phi_\epsilon-\Psi_\epsilon) P_\zeta^\perp\hat{f}(\eta)\\
				&=\sum_{m,l}\widehat{\sigma_k}(m)\cdot l\; \widehat{\sigma_k}(l)\cdot\eta\;(\Phi_\epsilon-\Psi_\epsilon) P_\zeta^\perp\hat{f}(\eta)  \\
				&\quad+ \sum_{m,l}\widehat{\sigma_k}(m)\cdot \eta\; \widehat{\sigma_k}(l)\cdot\eta\;(\Phi_\epsilon-\Psi_\epsilon) P_\zeta^\perp\hat{f}(\eta)\\
				&=: R_4^\epsilon+R_5^\epsilon.
			\end{split}
		\end{equation*}
		As $\widehat{\rho_\epsilon}$ is a even function, we have $\big|\nabla\widehat{\rho_\epsilon}(\eta)\big|=\epsilon|\nabla\widehat{\rho}(\epsilon\eta)|\lesssim \epsilon^2 |\eta|$ for every $\eta$. Hence we have
		\begin{equation*}
			\big|\Phi_\epsilon-\Psi_\epsilon\big|=\big|\widehat{\rho_\epsilon}(\eta+m)-\widehat{\rho_\epsilon}(\eta+l)\big|\lesssim \epsilon^2\big(|m|^2+|l|^2+|\eta|\,|m-l|\big)
		\end{equation*}
		Then arguing as \eqref{V-epsilon2}, we obtain $\|R_4^\epsilon\|_{H^s}\lesssim_k\epsilon^2\|f\|_{H^{s+2}}$. Besides, for every $\epsilon\in (0,1)$ and $\eta,m,l\in \Z_0^3$, we have
		\begin{align*}
			|\eta| \,	\big|\Phi_\epsilon-\Psi_\epsilon\big|
			&=|\eta|\, \bigg|\int_{0}^{1} \frac{\d}{\d t}\widehat{\rho_\epsilon}(\eta+tm+(1-t)l)\,\d t\bigg|  \\
			&\leq \epsilon|\eta|\,|m-l|\, \bigg|\int_{0}^{1} \nabla\widehat{\rho}(\epsilon(\eta+tm+(1-t)l))\,\d t\bigg| \\
			&\lesssim |m|^2+|l|^2.
		\end{align*}
		Then arguing as \eqref{V-epsilon0}, we get $\|R_4^\epsilon\|_{H^s}\lesssim_k\|f\|_{H^{s}}$. 
		
		It remains to cope with the more difficult term $R_5^\epsilon$. Recalling that $\zeta=\eta + l$, we have, by symmetry and $\widehat{\sigma_k}(m)\cdot m=0$
		\begin{align*}
			R_5^\epsilon &= \sum_{m,l}\widehat{\sigma_k}(m)\cdot \eta\; \widehat{\sigma_k}(l)\cdot\eta\;(\Phi_\epsilon-\Psi_\epsilon) P_{\eta+l}^\perp\hat{f}(\eta) \\
			&=\frac{1}{2}\sum_{m,l}\widehat{\sigma_k}(m)\cdot (\eta+m)\; \widehat{\sigma_k}(l)\cdot\eta\;(\Phi_\epsilon-\Psi_\epsilon) \big(P_{\eta+l}^\perp-P_{\eta+m}^\perp\big)\hat{f}(\eta)
		\end{align*}
		By \cite[Lemma 5.5]{FlaLuo21}, We have
		\begin{align*}
			\big|(\eta+m)\,\big(P_{\eta+l}^\perp-P_{\eta+m}^\perp\big)\big| 
			&\leq  |\eta+m|\times4\frac{|m-l|}{|\eta+m|}\lesssim |m|+|l|.
		\end{align*}
		implying that the difference of the non-local term absorbs one derivative. Consequently, the estimate for $R_5^\epsilon$ via a completely parallel argument to the one used for $R_4^\epsilon$, which completes the proof.
	\end{proof}
	\section{Solution properties of the limit equations}
	
	In this section, we establish the well-posedness of the limit equations under various settings. The truncation of the nonlinear term is omitted since it does not change the well-posedness results. 
	Consider the Navier-Stokes equation with critical hyperviscosity on $\Tb^3$:
	\begin{equation}\label{critical-hypervis}
		\left\{
		\begin{aligned}
			& \partial_t u + \Pi (u\cdot\nabla u) =-\nu(-\Delta)^{\frac{5}{4}}u,\\
			& u(0)=u_0.
		\end{aligned}
		\right.
	\end{equation}
	It is well known that global regularity holds for \eqref{critical-hypervis}. For completeness, we give the sketch of proof.
	\begin{proposition}
		Fix $s\geq0$. For every $u_0\in H^s$, the equation  \eqref{critical-hypervis} admits a unique solution $u\in C([0,\infty);H^s)\cap L^2((0,\infty);H^{s+1})$. Moreover, the solution instantaneously gains regularity in time and space 
		\begin{equation*}
			u\in C_{t,x}^\infty \quad \text{ for every } t>0,\;x\in \Tb^3.
		\end{equation*}
	\end{proposition}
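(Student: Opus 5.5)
The plan is to run the standard energy method on the Fourier side. Write $\Lambda=(-\Delta)^{1/2}$ as in Proposition \ref{GWP-NS-cut-off}. I will first work with Galerkin approximations $u_n=P_n u$, where $P_n$ projects onto Fourier modes $|\xi|\le n$, obtain a priori bounds uniform in $n$, and then pass to the limit by compactness (Aubin--Lions). The decisive a priori estimate is at the critical level $s=0$: the basic energy identity
\begin{equation*}
\tfrac12\tfrac{\d}{\d t}\|u\|_{L^2}^2+\nu\|u\|_{H^{5/4}}^2=0
\end{equation*}
holds because $\langle \Pi(u\cdot\nabla u),u\rangle=0$. It gives $u\in L^\infty_tL^2\cap L^2_tH^{5/4}$ globally. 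This is exactly the Lions regime: the dissipation of order $2\cdot\frac54=\frac52=1+\frac d2$ with $d=3$ makes the energy class critical, so it controls all higher norms.

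For $s>0$, apply $\Lambda^s$ and use the cancellation $\langle u\cdot\nabla\Lambda^su,\Lambda^su\rangle=0$ together with the Kato--Ponce commutator estimate, as in the treatment of the term $I$ in Proposition \ref{GWP-NS-cut-off}:
\begin{equation*}
|\langle\Lambda^s(u\cdot\nabla u),\Lambda^su\rangle|\lesssim\|\nabla u\|_{L^{p}}\|\Lambda^su\|_{L^{q}}^2,\qquad \tfrac1p+\tfrac2q=1 .
\end{equation*}
Take $p=3$ and $q=3$. The embeddings $H^{1/2}\hookrightarrow L^3$ and $H^{3/2}\hookrightarrow W^{1,3}$ then give the bound $\|u\|_{H^{3/2}}\|u\|_{H^{s+1/2}}^2$. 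I will interpolate $\|u\|_{H^{s+1/2}}$ between $H^{s}$ and $H^{s+5/4}$ with exponent $\frac25$, and $\|u\|_{H^{3/2}}$ between $L^2$ (bounded by the energy) and $H^{5/4}$-type norms. The goal is the differential inequality
\begin{equation*}
\tfrac{\d}{\d t}\|u\|_{H^s}^2+\nu\|u\|_{H^{s+5/4}}^2\le C_\nu\,\|u\|_{H^{5/4}}^2\,\|u\|_{H^s}^2,
\end{equation*}
with the factor $\|u\|_{H^{5/4}}^2$ integrable in time by the energy identity. Gronwall then yields $\sup_t\|u\|_{H^s}^2\le\|u_0\|_{H^s}^2\exp\big(C\nu^{-2}\|u_0\|_{L^2}^2\big)$. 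This is the bound \eqref{s} invoked in Theorem \ref{Delay blow up-NS}, whose constant has the shape $R_0e^{C R_0^2/\nu^2}$. The claimed $L^2_tH^{s+1}$ regularity follows since $\frac54\ge1$ on the torus.

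The main obstacle is this exponent bookkeeping in the critical case. A naive choice produces $\|u\|_{H^{5/4}}^{\alpha}$ with $\alpha>2$, which is not integrable, or puts a power larger than $2$ on $\|u\|_{H^{s+5/4}}$, which cannot be absorbed. The first attempt is to split the nonlinearity by paraproducts. Low--high interactions carry $\|\nabla u_{\text{low}}\|_{L^\infty}$-type factors, measured via $\|u\|_{H^{5/4}}^{a}\|u\|_{L^2}^{1-a}$. High--high interactions are controlled by putting the $5/4$ derivatives on each high factor. With this split, scaling shows both exponents land at exactly $2$, the critical balance. If the direct Kato--Ponce route leaves a small surplus, I will instead perform the estimate first at $s=5/4$ using $u\in L^2_tH^{5/4}$, and bootstrap upward from there.

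Uniqueness follows from an $L^2$ estimate on the difference $w=u^1-u^2$. The only bad term is $\langle w\cdot\nabla u^2,w\rangle$, bounded by $\|w\|_{L^2}\|w\|_{H^{5/4}}\|u^2\|_{H^{5/4}}$ up to the critical embedding; Gronwall, using $u^2\in L^2_tH^{5/4}$, closes it. Instantaneous smoothing comes from the same estimates. Multiplying the $H^{s+k}$ inequality by $t^{k}$ gives $u(t)\in H^{s+k}$ for every $t>0$ and all $k$, and time regularity follows from the equation by differentiating in $t$.

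If the equation carries both $\Delta$ and $\frac{3\nu}{5}(-\Delta)^{1+a}$ with $a\ge\frac14$, as in Theorem \ref{scaling limit-cut-off}, the same argument applies verbatim. On the torus $\|u\|_{H^{1+a}}\gtrsim\|u\|_{H^{5/4}}$, so the extra dissipation only helps. Exponential decay in $t$ then follows from Poincaré's inequality applied to the energy identity, which gives \eqref{s-exp}.
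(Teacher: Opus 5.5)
Your overall architecture matches the paper's. You use the energy identity at the $L^2$ level, then Kato--Ponce plus Young plus Gronwall at the $H^s$ level with the time-integrable weight $\|u\|_{H^{5/4}}^2$, an $L^2$ estimate on the difference for uniqueness, and bootstrap for smoothing. Your uniqueness bound is correct, e.g.\ via $\|\nabla u^2\|_{L^{12/5}}\|w\|_{L^{24/7}}^2\lesssim\|u^2\|_{H^{5/4}}\|w\|_{H^{5/8}}^2$.

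The gap is exactly the step you flag as the main obstacle: the route you propose does not produce the target inequality. With the symmetric split $p=q=3$ you must control $\|u\|_{H^{3/2}}\|u\|_{H^{s+1/2}}^2$.
\begin{itemize}
\item Interpolating $H^{3/2}$ between $L^2$ and $H^{5/4}$ is impossible, since $3/2>5/4$.
\item If you interpolate it between $L^2$ and $H^{s+5/4}$ instead, Young's inequality leaves a remainder of order $\|u\|_{H^s}^{(4s+5)/(2s)}$ (times a power of $\|u\|_{L^2}$). This is superlinear in $\|u\|_{H^s}^2$ and yields only a local-in-time bound, not \eqref{s}.
\item For $0<s<\frac14$ the split cannot work at all. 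It places $\frac32>s+\frac54$ derivatives on one factor, more than the dissipation $\|u\|_{H^{s+5/4}}$ provides. Your fallback of starting at $s=\frac54$ says nothing about $u\in C([0,\infty);H^s)$ near $t=0$ for data only in $H^s$.
\item The paraproduct sketch is also off as written. In dimension three, $\|\nabla u\|_{L^\infty}$ needs more than $\frac52$ derivatives, so it is not controlled by interpolating $L^2$ and $H^{5/4}$.
\end{itemize}

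The missing idea is to break the symmetry between the two $\Lambda^s u$ factors, as the paper does. Estimate $\|[\Lambda^s,u\cdot\nabla]u\|_{L^2}\lesssim\|\Lambda^s u\|_{L^{12}}\|\nabla u\|_{L^{12/5}}$ and pair it with $\|\Lambda^s u\|_{L^2}$. Then $H^{5/4}\hookrightarrow L^{12}$ and $H^{1/4}\hookrightarrow L^{12/5}$ give directly, for every $s\ge0$,
\[
|\langle[\Lambda^s,u\cdot\nabla]u,\Lambda^s u\rangle|\lesssim_s\|u\|_{H^{s+5/4}}\|u\|_{H^{5/4}}\|u\|_{H^s}\le\tfrac{\nu}{2}\|u\|_{H^{s+5/4}}^2+\tfrac{C_s}{\nu}\|u\|_{H^{5/4}}^2\|u\|_{H^s}^2,
\]
with no interpolation needed. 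For $s\ge\frac14$ your split can be rescued: log-convexity of $t\mapsto\|u\|_{H^t}$ together with majorization gives $\|u\|_{H^{3/2}}\|u\|_{H^{s+1/2}}^2\le\|u\|_{H^{s+5/4}}\|u\|_{H^{5/4}}\|u\|_{H^s}$. But that is not the interpolation you described, and it fails for $s<\frac14$.
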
 
	\begin{proof}
		The proof follows from \cite[Section 2]{KP02}. First we give some a priori estimates.
		\begin{itemize}
			\item $L^2$ estimate. Due to the cancellation $\<u\cdot\nabla u, u\>=0$ by $\divg u=0$, we have
			\begin{equation*}
				\frac{\d}{\d t}\|u\|_{L^2}^2 + 2\nu\|u\|_{H^\frac{5}{4}}^2=0,
			\end{equation*}
			which implies
			\begin{equation*}
				\sup_{t\geq 0}\|u(t)\|_{L^2}^2 + \nu\int_{0}^{\infty}\|u(t)\|_{H^\frac{5}{4}}^2\,\d t\leq 2\|u_0\|_{L^2}^2.
			\end{equation*}
			Moreover, by Poincar\'e's inequality, we have exponential decay for $L^2$ norm:
			\begin{equation*}
				\|u(t)\|_{L^2}\leq e^{-\nu t}\|u_0\|_{L^2},\quad \text{ for every }t\geq 0.
			\end{equation*}
			\item $H^s$ estimate. Let $\Lambda=(-\Delta)^{\frac{1}{2}}$. Still by cancellation, we have
			\begin{equation*}
				\frac{1}{2}\frac{\d}{\d t}\|u\|_{H^s}^2 + \nu\|u\|_{H^{s+\frac{5}{4}}}^2= -\<\Lambda^s(u\cdot \nabla u),\Lambda^s u\>= -\<[\Lambda^s,u\cdot \nabla] u,\Lambda^s u\>.
			\end{equation*}
			By the classical Kato-Ponce estimate \cite{KatPon88} and Sobolev embedding $H^{\frac{5}{4}}\hookrightarrow L^{12},\;H^{\frac{1}{4}}\hookrightarrow L^{\frac{12}{5}}$, we have
			\begin{equation*}
				\|[\Lambda^s,u\cdot \nabla] u\|_{L^2}\lesssim_s \|u\|_{W^{s,12}}\|\nabla u\|_{L^\frac{12}{5}}\lesssim_s \|u\|_{H^{s+\frac{5}{4}}}\|u\|_{H^{\frac{5}{4}}}.
			\end{equation*}
			Hence we arrive at
			\begin{align*}
				\frac{1}{2}\frac{\d}{\d t}\|u\|_{H^s}^2 + \nu\|u\|_{H^{s+\frac{5}{4}}}^2&\leq 	\|[\Lambda^s,u\cdot \nabla] u\|_{L^2} \|\Lambda^s u\|_{L^2} \\
				&\leq C_s \|u\|_{H^{s+\frac{5}{4}}}\|u\|_{H^{\frac{5}{4}}}\|u\|_{H^s} \\
				&\leq \frac{\nu}{2} \|u\|_{H^{s+\frac{5}{4}}}^2 + \frac{C_s}{\nu} \|u\|_{H^{\frac{5}{4}}}^2\|u\|_{H^s}^2,
			\end{align*}
			yielding 
			\begin{align}
				&\sup_{t\geq 0}\|u(t)\|_{H^s}\leq \|u_0\|_{H^s}e^{\frac{C_s}{\nu}\int_{0}^{\infty}\|u(t)\|_{H^{5/4}}^2\,\d t}\leq \|u_0\|_{H^s} e^{\frac{C_s}{\nu^2}\|u_0\|_{L^2}^2},\label{s}\\
				&\int_{0}^{\infty} \|u(t)\|_{H^{s+\frac{5}{4}}}^2\,\d t\leq \|u_0\|_{H^s}^2 +\frac{C_s}{\nu^2} 	\sup_{t\geq 0}\|u(t)\|_{H^s}^2	 \|u_0\|_{L^2}^2.\label{s+5/4}
			\end{align}
			With these a priori estimates, it is easy to prove the existence of solutions in $u\in C([0,\infty);H^s)\cap L^2((0,\infty);H^{s+1})$. For the uniqueness, suppose $u^1, u^2$ are two solutions in $C([0,\infty);L^2)\cap L^2((0,\infty);H^{\frac{5}{4}})$ with the same initial data. Now the difference $u=u^1-u^2$ satisfies
			\begin{equation*}
				\left\{
				\begin{aligned}
					& \partial_t u + \Pi (u^1\cdot\nabla u) + \Pi (u\cdot\nabla u^2)=-\nu(-\Delta)^{\frac{5}{4}}u,\\
					& u(0)=0.
				\end{aligned}
				\right.
			\end{equation*}
			Then we have
			\begin{equation*}
				\frac{1}{2}\frac{\d}{\d t}\|u\|_{L^2}^2 + \nu\|u\|_{H^\frac{5}{4}}^2= -\<u\cdot\nabla u^2,u\>\leq \|\nabla u^2\|_{H^{\frac{1}{4}}} \|u\otimes u\|_{H^{-\frac{1}{4}}}.
			\end{equation*}
			By \cite[Corollary 2.55]{BCD11} and interpolation, we have  
			\begin{equation*}
				\|u\otimes u\|_{H^{-\frac{1}{4}}}\lesssim \| u\|_{H^{\frac{5}{8}}}^2\lesssim \| u\|_{L^{2}}\| u\|_{H^{\frac{5}{4}}}.
			\end{equation*} 
			Thus by Cauchy inequality, we obtain
			\begin{align*}
				\frac{1}{2}\frac{\d}{\d t}\|u\|_{L^2}^2 + \nu\|u\|_{H^\frac{5}{4}}^2&\leq C\| u^2\|_{H^{\frac{5}{4}}}\| u\|_{L^{2}}\| u\|_{H^{\frac{5}{4}}}\\
                &\leq \nu\|u\|_{H^\frac{5}{4}}^2 + C\| u^2\|_{H^{\frac{5}{4}}}^2\| u\|_{L^{2}}^2,
			\end{align*}
			which implies $u=0$ by Gr\"onwall's inequality. The instantaneously regularity can be obtained by bootstrap.\qedhere
		\end{itemize}
	\end{proof}
	\begin{remark}
		We have shown the exponential decay of $L^2$ norm, boundedness of $H^s$ norm and instantaneous regularity, thus we actually have exponential decay in every Sobolev space. In particular, we give an explicit decay in $H^s$: fix $R_0>0$, suppose $u_0\in H^s$ and $\|u_0\|_{H^s}\leq R_0$, then there exists a constant $K=K(s,\nu,R_0)$, such that
		\begin{equation}\label{s-exp}
			\|u(t)\|_{H^s}\leq K(s,\nu,R_0)e^{-\frac{5\nu}{4s+5}t},\quad \text{ for every }t\geq 0.
		\end{equation}
		Indeed, there must exist some $t_0\in (0,1)$, such that
		\begin{equation*}
			\|u(t_0)\|_{H^{\frac{5}{4}}}^2\leq\int_{0}^{\infty} \|u(t)\|_{H^{s+\frac{5}{4}}}^2\,\d t=:R_1,
		\end{equation*} 
		where $R_1$ is bounded by some constant $C_1(s,\nu,R_0)$ by \eqref{s+5/4}. Now by \eqref{s}, we know that for all $t>t_0$, $\|u(t)\|_{H^{s+\frac{5}{4}}}$ is bounded by  constant $C_2(s,\nu, R_1)$.
		By interpolation between $H^{s+\frac{5}{4}}$ and $L^2$, we obtain, for every $t\geq t_0$,
		\begin{equation*}
			\|u(t)\|_{H^s}\leq \|u(t)\|_ {H^{s+\frac{5}{4}}}^{\theta}\|u(t)\|_{L^2}^{1-\theta}\leq C_2(s,\nu, C_1)^\theta \|u_0\|_{L^2}^{\theta} e^{-(1-\theta)\nu t},
		\end{equation*}
		where $\theta=\frac{4s}{4s+5}\in (0,1)$.
	\end{remark}
	
	Next we consider 3D NS equation with damping:
	\begin{equation}\label{NS-damping}
		\left\{
		\begin{aligned}
			& \partial_t u + \Pi (u\cdot\nabla u) =\Delta u-\nu u,\\
			& u(0)=u_0.
		\end{aligned}
		\right.
	\end{equation}
	\begin{proposition}\label{limit-NS}
		Fix $R_0>0,\;s>\frac{1}{2}$. There exists $\nu_0=\nu_0(R_0)$ such that for every $\nu>\nu_0$ and initial data $u_0$ satisfying 
		\begin{equation*}
			u_0\in H^s,\quad\text{and} \quad \|u_0\|_{H^s}\leq R_0,
		\end{equation*}
		the equation \eqref{Euler-damping} admits a unique solution $u\in L^2([0,\infty); H^{s+1})\cap C([0,\infty);H^s)$. Moreover, if $u_0\in B_{2,p}^{n+1-\frac{2}{p}}$ for some $n>-\frac{1}{2},\, 1\leq p< \infty$ with $n+1-\frac{2}{p} >s$, then  the solution $u$ also belongs to $L_{loc}^p([0,\infty); H^{n+1})\cap C([0,\infty);B_{2,p}^{n+1-\frac{2}{p}})$.
	\end{proposition}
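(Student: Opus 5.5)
The plan is an energy method in which the damping $-\nu u$ beats the quadratic nonlinearity once $\nu$ is large relative to $R_0$; the statement refers to \eqref{Euler-damping}, but the system is \eqref{NS-damping}. Local existence and uniqueness in $C([0,T];H^s)\cap L^2(0,T;H^{s+1})$ for $s>\frac12$ are classical, for instance by Galerkin approximation or by the cut-off argument of Proposition \ref{GWP-NS-cut-off} with no noise. So everything reduces to a global a priori bound.

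\textbf{Step 1: the $H^s$ estimate.} Pairing the equation with $\Lambda^{2s}u$ and using the cancellation and Kato--Ponce bound exactly as in Proposition \ref{small-GWP}, I will obtain
\begin{equation*}
\frac{\d}{\d t}\|u\|_{H^s}^2 + 2\|u\|_{H^{s+1}}^2 + 2\nu\|u\|_{H^s}^2 \lesssim_s \|u\|_{H^{3/2}}\|u\|_{H^s}\|u\|_{H^{s+1}} .
\end{equation*}
Interpolating $H^{3/2}$ between $H^s$ and $H^{s+1}$ when $s<\frac32$, and using Young's inequality, gives
\begin{equation*}
\dot x + \|u\|_{H^{s+1}}^2 \le -2\nu x + C(s)x^{p(s)/2}, \qquad x=\|u\|_{H^s}^2,
\end{equation*}
with $p(s)>2$ the exponent from Proposition \ref{small-GWP}.

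\textbf{Step 2: choice of $\nu_0$.} Set $q=p(s)/2>1$. If $2\nu> C(s)(2R_0^2)^{q-1}$, a continuity argument shows that $x$ is non-increasing while $x\le 2R_0^2$, so $x(t)\le R_0^2$ for all $t$. The same inequality gives $\dot x\le -\nu x$, hence exponential decay $x(t)\le R_0^2e^{-\nu t}$. Integrating then controls $\int_0^\infty\|u\|_{H^{s+1}}^2\,\d t$. This is where the threshold $\nu_0=\nu_0(R_0)$ comes from; it also depends on $s$. Uniqueness follows by an $L^2$ or $H^s$ energy estimate on the difference of two solutions together with Gr\"onwall's inequality, as in the critical hyperviscous case.

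\textbf{Step 3: the $L^p$ regularity claim (main obstacle).} Here the task is to propagate $u_0\in B^{n+1-2/p}_{2,p}$ into $L^p_{loc}H^{n+1}\cap C B^{n+1-2/p}_{2,p}$. I will treat the nonlinearity as a forcing term $f=-\Pi(u\cdot\nabla u)-\nu u$ in the deterministic maximal $L^p$-regularity for $\Delta$ on $X_0=H^{n-1}$, $X_1=H^{n+1}$. This is the special case of Proposition \ref{USMR} with $\sigma\equiv0$ and $g=0$, with trace space $B^{n+1-2/p}_{2,p}$.

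The obstacle is bounding $\|u\cdot\nabla u\|_{L^p(0,T;H^{n-1})}$ using only the global $H^s$ bound already obtained. I would use product estimates: for $n-1\le s-\frac12$, the product maps $H^s\times H^{n}$ into $H^{n-1}$, since $s>\frac12$, giving
\begin{equation*}
\|u\cdot\nabla u\|_{H^{n-1}}\lesssim \|u\|_{H^s}\|u\|_{H^{n+\theta}}
\end{equation*}
with some $\theta<1$. Interpolating $H^{n+\theta}$ between the trace space and $X_1$, and absorbing into the left-hand side on short time intervals, a bootstrap then gives the claim locally in time. Since the $H^s$ norm stays bounded globally, these short intervals can be concatenated, which yields $L^p_{loc}$.

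For larger $n$, I would raise the regularity step by step, each time using the bound from the previous step.
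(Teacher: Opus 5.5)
Your Steps 1--2 coincide with the paper's proof: the $H^s$ energy identity, the Kato--Ponce bound, the ODE inequality of Proposition \ref{small-GWP} with the damping term, and a threshold $\nu_0(R_0)$. Your exponent $x^{p(s)/2}$ is in fact the correct one for $x=\|u\|_{H^s}^2$.

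The genuine difference is Step 3. The paper proves no global maximal-regularity estimate. It takes local well-posedness in $L^p(0,T_*;H^{n+1})\cap C([0,T_*];B^{n+1-2/p}_{2,p})$ for the subcritical datum as standard. It then uses that the global $H^s$ solution smooths instantly, $u\in C([\delta,\infty);H^{s'})\cap L^2([\delta,\infty);H^{s'+1})$ for all $s'>s$ and $\delta>0$ (bootstrap as in Proposition \ref{GWP-NS-cut-off}). So on $[\delta,\infty)$ membership in the $L^p$ class is automatic, and the two pieces are glued by uniqueness.

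You instead treat $-\Pi(u\cdot\nabla u)-\nu u$ as a forcing in deterministic maximal regularity. You close by absorption on intervals whose length depends only on the global $H^s$ bound, then concatenate and induct on $n$. What each route buys:
\begin{itemize}
\item Your route gives explicit bounds for $\|u\|_{L^p(0,T;H^{n+1})}+\|u\|_{C([0,T];B^{n+1-2/p}_{2,p})}$ in terms of $R_0$, $\|u_0\|_{B^{n+1-2/p}_{2,p}}$ and $T$, closer in spirit to the estimates of Section \ref{NS-II}.
\item It costs product estimates and an induction. To make the absorption legitimate you still need a local solution in the $L^p$ class plus a continuation argument, since the absorbed norm must be known to be finite.
\item The paper's route is shorter and purely qualitative; it pays with parabolic smoothing instead.
\end{itemize}

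Some details need fixing.
\begin{itemize}
\item In Step 2, $\dot x\le-\nu x$ requires $C(s)(2R_0^2)^{q-1}\le\nu$, not just $<2\nu$. Enlarge $\nu_0$.
\item In Step 3, the bound $\|u\cdot\nabla u\|_{H^{n-1}}\lesssim\|u\|_{H^s}\|u\|_{H^{n+\theta}}$ holds whenever $n-1\le s$, with $\theta\in(\frac32-s,1)$, and you need this full range. Under your restriction $n-1\le s-\frac12$, one step lifts the $L^\infty_t$ regularity only to about $n+1-\frac2p\le s+\frac32-\frac2p$, which is no gain for $p\le\frac43$. With $n-1\le s$, each step gains about $2-\frac2p>0$.
\item Maximal $L^p$-regularity needs $p>1$, and Proposition \ref{USMR} is stated only for $p\ge2$. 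So $p=1$, formally allowed in the statement, is not reached by your argument. The paper's appeal to a standard local theory is equally silent there, and Section \ref{NS-II} only uses $p\ge2$.
\end{itemize}
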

	
	\begin{proof}
		As the space $H^{s}$ is subcritical if $s>1/2$, it is classical and we only focus on energy estimates. Indeed, we have
		\begin{equation*}
			\frac{1}{2}\frac{\d}{\d t}\|u\|_{H^s}^2  + \|u\|_{H^{s+1}}^2+\nu \|u\|_{H^s}^2 = -\big\<\Lambda^s u, [\Lambda^s,u\cdot\nabla]u \big\>.
		\end{equation*}
		Define $x(t)=\|u(t)\|_{H^s}^2$. Similarly to the proof of Proposition \ref{small-GWP}, we get
		\begin{equation*}
			\dot{x} +\nu x\leq Cx^{p(s)},
		\end{equation*}
		where $p(s)=\frac{2(2s+1)}{2s-1}$ for $s\in (1/2,3/2)$ and $p(s)=4$ for $s\geq 3/2$. It is easy to check that when $\nu>\nu_0=CR_0^{2(p(s)-1)}$, $x(t)$ will not blow up and has exponential decay when $t\rightarrow \infty$.
		
		Under the assumption $n+1-\frac{2}{p} >s>1/2$, the embedding $B_{2,p}^{n+1-\frac{2}{p}}\hookrightarrow H^s$ holds; thus, the initial condition is subcritical to \eqref{NS-damping}. Hence it is standard that the local well posedness holds in space 
		\begin{equation*}
			L^p([0,T_*]; H^{n+1})\cap C([0,T_*];B_{2,p}^{n+1-\frac{2}{p}}),\quad \text{ for some } T_*>0
		\end{equation*}
		On the other hand, following the same bootstrap argument as in the proof of Proposition \ref{GWP-NS-cut-off}, the solution $u$ instantaneously gains regularity
		\begin{equation*}
			u\in L^2([\delta,\infty); H^{s'+1})\cap C([\delta,\infty);H^{s'}),\quad \text{ for every }s'>s \text{ and }\delta>0.
		\end{equation*}
		By the uniqueness of the solution to \eqref{NS-damping}, we obtain the desired result $u\in L_{loc}^p([0,\infty); H^{n+1})\cap C([0,\infty);B_{2,p}^{n+1-\frac{2}{p}})$.
	\end{proof}

	Now we turn to the 3D Euler equation with damping:
	\begin{equation}\label{Euler-damping}
		\left\{
		\begin{aligned}
			& \partial_t u + \Pi (u\cdot\nabla u) =-\nu u,\\
			& u(0)=u_0.
		\end{aligned}
		\right.
	\end{equation}
	\begin{proposition}\label{limit-Euler}
		Fix $R_0>0,\;s>\frac{5}{2}$. There exists $\nu_0=\nu_0(R_0)$ such that for every $\nu>\nu_0$ and initial data $u_0$ satisfying 
		\begin{equation*}
			u_0\in H^s,\quad\text{and} \quad \|u_0\|_{H^s}\leq R_0,
		\end{equation*}
		the equation \eqref{Euler-damping} admits a unique solution $u\in C([0,\infty);H^s)$.
	\end{proposition}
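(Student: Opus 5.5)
We prove Proposition \ref{limit-Euler} by the classical energy method for the Euler equations in $H^s$, $s>\frac52$, combined with a comparison (ODE) argument exploiting the damping $-\nu u$. Local well-posedness of \eqref{Euler-damping} in $C([0,T_*);H^s)$ is standard (Kato's theory), since the damping is a bounded linear perturbation. Uniqueness in this class also follows from the usual $L^2$ estimate for the difference of two solutions together with $\nabla u\in L^\infty$. The task is therefore to exclude blow-up through an a priori bound on $\|u(t)\|_{H^s}$ that stays finite for all times.

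Apply $\Lambda^s$ to \eqref{Euler-damping}, pair with $\Lambda^s u$, and use the cancellation $\langle u\cdot\nabla\Lambda^s u,\Lambda^s u\rangle=0$ together with the Kato--Ponce commutator estimate \cite{KatPon88}, exactly as in the proof of Theorem \ref{Delay blow-up Euler}. This gives
\begin{equation*}
\frac12\frac{\d}{\d t}\|u\|_{H^s}^2+\nu\|u\|_{H^s}^2\le C_s\|\nabla u\|_{L^\infty}\|u\|_{H^s}^2\le C_s'\|u\|_{H^s}^3,
\end{equation*}
where the last inequality uses the embedding $H^{s-1}\hookrightarrow L^\infty$, valid because $s-1>\frac32$. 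Set $y(t)=\|u(t)\|_{H^s}$. Then, wherever $y>0$,
\begin{equation*}
\dot y\le -\nu y + C y^2 .
\end{equation*}
This is a Bernoulli inequality and can be integrated explicitly. If $y(0)\le R_0<\nu/C$, then $y$ is non-increasing and in fact $y(t)\le \frac{\nu y(0)e^{-\nu t}}{\nu-Cy(0)(1-e^{-\nu t})}\le \frac{\nu R_0}{\nu-CR_0}e^{-\nu t}$. Hence choosing $\nu_0=2CR_0$ gives the global bound $\sup_{t}\|u(t)\|_{H^s}\le 2R_0$, with exponential decay.

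By the Beale--Kato--Majda criterion, or more simply the $H^s$ continuation criterion, the local solution then extends to all of $[0,\infty)$. Continuity in time with values in $H^s$ (and not merely weak continuity) follows from the standard Bona--Smith or mollification argument, which is unaffected by the linear damping.

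The main point requiring care is making the differential inequality rigorous at the $H^s$ level, since $\frac{\d}{\d t}\|u\|_{H^s}^2$ involves $u\cdot\nabla u\in H^{s-1}$ only. I would handle this by mollifying with $J_\epsilon$ as in Appendix \ref{proof of commutator}: derive the inequality for $\|J_\epsilon u\|_{H^s}^2$, control the extra commutator $[J_\epsilon,u\cdot\nabla]$ uniformly in $\epsilon$, and pass to the limit. This is the same structure as the proof of \eqref{commutator1}. Everything else is routine.
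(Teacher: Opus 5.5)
Your proposal is correct and follows the paper's proof: the same $H^s$ energy estimate with the commutator term bounded by $C_s\|u\|_{H^s}^3$, reduction to the Bernoulli inequality $\dot y+\nu y\le Cy^2$ with its explicit solution, and a threshold $\nu_0\sim C_sR_0$ (the paper takes $\nu_0=C_sR_0$, you take $2CR_0$). Your remarks on mollification, the continuation criterion and strong time continuity supply details that the paper leaves at the ``formal'' level.
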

	\begin{proof}
		In the following, we restrict our attention to the energy estimates. Let $\Lambda^s=(-\Delta)^{\frac{s}{2}}$. At least formally, we have
		\begin{equation*}
			\frac{1}{2}\frac{\d}{\d t}\|u\|_{H^s}^2  + \nu \|u\|_{H^s}^2 = -\big\<\Lambda^s u, \Lambda^s(u\cdot\nabla u) \big\> = -\big\<\Lambda^s u, [\Lambda^s,u\cdot\nabla]u \big\>.
		\end{equation*} 
		It is clear that $\big|\big\<\Lambda^s u, [\Lambda^s,u\cdot\nabla]u \big\>\big|\lesssim_s \|u\|_{H^s}^3$  (c.f. \cite[Section 2.1]{BV22}), hence we arrive at
		\begin{equation*}
			\frac{\d}{\d t}\|u\|_{H^s}  + \nu \|u\|_{H^s} \leq C_s \|u\|_{H^s}^2,
		\end{equation*} 
		which implies that, for every $t\geq0$,
		\begin{equation*}
			\|u(t)\|_{H^s}\leq \frac{\nu}{C_s-\big(C_s-\frac{\nu}{\|u_0\|_{H^s}}\big)e^{\nu t}}.
		\end{equation*}
		It is obvious that the solution will not blow up if $R_0< \frac{\nu}{C_s}$, hence we can choose $\nu_0=C_s R_0$.
	\end{proof}
	\section{Pathwise uniqueness of (\ref{scalar eqn})}\label{linear-unique}
	We always assume that the covariance $Q(x,y)=\sum_k \sigma_k(x)\otimes \sigma_k(y)$ satisfies
	\begin{itemize}
		\item \makebox[0.7\textwidth][l]{$Q(x,y)=Q(x-y)$, for every $x,y \in \mathbb{T}^d$;} (homogeneous)
		\item \makebox[0.7\textwidth][l]{$\hat{Q}(\xi)=g(\xi)P_\xi^\perp$, where $g \in l^1(\mathbb{Z}_0^d) \cap l^\infty(\mathbb{Z}_0^d)$ is radial.} (isotropic)
	\end{itemize}
	
	By linearity of equation \eqref{scalar eqn}, proving pathwise uniqueness reduces to showing that any weak solution $\omega \in L^\infty(\Omega \times [0,T]; H^{2b+a})$ with zero initial data is necessarily trivial. With out loss of generality, we take $b=0$. Writing it into It\^o's form and taking Fourier transform, we have, for every $\xi\in \Z_0^d$,
	\begin{equation*}
		\begin{split}
			\d \omega_{\xi} = \frac{i}{(2\pi)^{\frac{d}{2}}}\sum_{k,\eta} \widehat{\sigma_k}(\xi-\eta)\cdot\eta |\eta|^{2a}\omega_\eta\,\d W^k 
			-\frac{S_\xi}{2} \omega_{\xi}, 
		\end{split}
	\end{equation*} 
	where $S_\xi= (2\pi)^{-d/2}\sum_\eta\eta^\perp \hat{Q}(\xi-\eta)\eta |\eta|^{2a} |\xi|^{2a}$ and $\omega_\xi=\hat{\omega}_\xi$.
	By It\^o's formula, we get
	\begin{align*}
		\d |\omega_{\xi}|^2&= \omega_{\xi}\,\d \bar{\omega}_\xi + \bar{\omega}_\xi\,\d \omega_{\xi} + \d\<\omega_{\xi}, \bar{\omega}_\xi\> \\
		&=\d M -S_\xi |\omega_\xi|^2 + \dfrac{1}{(2\pi)^{\frac{d}{2}}}\sum_\eta \eta^\perp \hat{Q}(\xi-\eta)\eta\, |\eta|^{4a} |\omega_\eta|^2,
	\end{align*}
	where $M$ is the martingale part. Now set $x_\xi=|\xi|^{2a}\E\big[|\omega_{\xi}|^2\big]$, then it satisfies the following linear infinite system of coupled ODEs:
	\begin{equation*}
		\dot{x}_\xi + S_\xi x_\xi = \dfrac{|\xi|^{2a}}{(2\pi)^{\frac{d}{2}}}\sum_\eta \eta^\perp \hat{Q}(\xi-\eta)\eta\, |\eta|^{2a}\, x_\eta.
	\end{equation*}
	Fix $t\in [0,T]$. Let $A_\xi=\int_{0}^{t}x_\xi(s)\,\d s$, then integrating in time, we arrive at
	\begin{equation*}
		x_\xi(t)+ S_\xi A_\xi = \dfrac{|\xi|^{2a}}{(2\pi)^{\frac{d}{2}}}\sum_\eta \eta^\perp \hat{Q}(\xi-\eta)\eta\, |\eta|^{2a}\, A_\eta.
	\end{equation*}
	
	By the assumption $\omega \in L^\infty(\Omega \times [0,T]; H^{a})$, we have $\sum_\xi A_\xi<\infty$, which implies that $\lim\limits_{|\xi|\rightarrow \infty}A_\xi=0$; in particular, we suppose $\{A_\xi\}$ attains its maximum at $\xi_1\in \Z_0^d$. We have
	\begin{align*}
		S_{\xi_1} A_{\xi_1}\leq x_{\xi_1}(t) + S_{\xi_1} A_{\xi_1}&= \dfrac{|\xi_1|^{2a}}{(2\pi)^{\frac{d}{2}}}\sum_\eta \eta^\perp \hat{Q}(\xi_1-\eta)\eta\, |\eta|^{2a}\, A_\eta \\
		&\leq \max A_\eta \dfrac{|\xi_1|^{2a}}{(2\pi)^{\frac{d}{2}}}\sum_\eta \eta^\perp \hat{Q}(\xi_1-\eta)\eta\, |\eta|^{2a} = A_{\xi_1} S_{\xi_1},
	\end{align*}
	which implies that all inequalities are qualities and in particular, for every $\eta$ such that $\xi_1-\eta\in \text{supp}(\hat{Q})$ and $\hat{Q}(\xi_1-\eta)\eta\neq 0$, it must hold $A_\eta=A_{\xi_1}$. Now pick $\zeta\in \text{supp}(\hat{Q})$. There exists a rotation $R$ such that $\xi_2:=\xi_1-R\zeta$ satisfies $|\xi_2|>|\xi_1|$ and $\hat{Q}(\xi_1-\eta)\eta\neq 0$. Note that $\xi_1-\xi_2=R\zeta\in \text{supp}(\hat{Q})$ since $\hat{Q}$ is isotropic; hence we have $A_{\xi_2}=A_{\xi_1}$. By iterating this procedure, we can construct a sequence ${\xi_n}$ such that $\xi_n\rightarrow \infty$ and $A_{\xi_n}= A_{\xi_1}=\max A_\eta$, which, combined with $\lim\limits_{|\xi|\rightarrow \infty}A_\xi=0$, implies that $A_{\xi}=0$ for every $\xi$.
	\bibliographystyle{amsalpha}
	\bibliography{scalinglimit}
\end{document}